\theoremstyle{plain}
\newtheorem{theorem}{Theorem}[section]
\newtheorem{corollary}[theorem]{Corollary}
\newtheorem{lemma}[theorem]{Lemma}
\newtheorem{proposition}[theorem]{Proposition}
\theoremstyle{definition}
\newtheorem{definition}[theorem]{Definition}
\newtheorem{remark}[theorem]{Remark}
\newtheorem{example}[theorem]{Example}
\newtheorem{question}[theorem]{Question}
\numberwithin{equation}{section}
\newcommand{\R}{\mathbb{R}}
\newcommand{\Krank}{\mathrm{K\text{-}rank}}
\newcommand{\B}[1]{\mathbb #1}
\newcommand{\C}[1]{\mathcal #1}
\DeclareMathOperator{\interior}{int}
\DeclareMathOperator{\rank}{rank}
\DeclareMathOperator{\cone}{cone}
\DeclareMathOperator{\Orth}{O}
\newcommand{\nrank}{\rank_+}
\DeclareMathOperator{\cprank}{cp\text{-}rank}
\newcommand{\inner}[1]{\langle #1 \rangle}
\DeclareMathOperator{\im}{im}
\date{\today}
\begin{document}
\title{Uniqueness of nonnegative matrix factorizations by rigidity theory}
\author{Robert Krone and Kaie Kubjas}

\maketitle

\begin{abstract}
Nonnegative matrix factorizations are often encountered in data mining applications where they are used to explain datasets by a small number of parts. For many of these applications it is desirable that there exists a unique nonnegative matrix factorization up to trivial modifications given by scalings and permutations. This means that model parameters are uniquely identifiable from the data.  Rigidity theory of bar and joint frameworks is a field that studies uniqueness of point configurations given some of the pairwise distances. The goal of this paper is to use ideas from rigidity theory to study uniqueness of nonnegative matrix factorizations in the case when nonnegative rank of a matrix is equal to its rank. We characterize infinitesimally rigid nonnegative factorizations, prove that a nonnegative factorization is infinitesimally rigid if and only if it is locally rigid and a certain matrix achieves its maximal possible Kruskal rank, and show that locally rigid nonnegative factorizations can be extended to globally rigid nonnegative factorizations. These results give so far the strongest necessary condition for the uniqueness of a nonnegative factorization. We also explore connections between rigidity of nonnegative factorizations and boundaries of the set of matrices of fixed nonnegative rank. Finally we extend these results from nonnegative factorizations to completely positive factorizations.
\end{abstract}

\section{Introduction}

Nonnegative matrix factorization of size $r$ decomposes a matrix $M \in \mathbb{R}^{m \times n}_{\geq 0}$ as $M=AB$ where $A \in \mathbb{R}^{m \times r}_{\geq 0}$ and $B \in \mathbb{R}^{r \times n}_{\geq 0}$. The smallest $r \in \mathbb{N}$ such that $M$ has a size-$r$ nonnegative factorization is called the nonnegative rank of $M$.  Approximations by matrices of low nonnegative rank are ubiquitous in data mining applications where they are used to explain a dataset by a small number of parts; the number of parts being equal to nonnegative rank of the approximation. For example, Lee and Seung~\cite{lee1999learning} used nonnegative matrix factorizations for studying databases of face images. In this application, rows of $M$ correspond to different pixels of an image and columns of $M$ correspond to different images. A size-$r$ nonnegative factorization finds $r$ basis images (corresponding to columns of $A$) such that every original image is a nonnegative linear combination of these basis images (nonnegative coefficients are given by columns of $B$). Another popular application is topic modeling~\cite{xu2003document}, where the matrix $M$ gives frequencies of words in documents, and a nonnegative matrix factorization decomposes this matrix with respect to topics. Nonnegative matrix factorizations and nonnegative rank appear also in complexity theory~\cite{yannakakis1991expressing}, computational biology~\cite{devarajan2008nonnegative}, music analysis~\cite{smaragdis2003non}, blind source separation~\cite{cichocki2009nonnegative}, spectral data analysis~\cite{pauca2006nonnegative}.  For many of these applications it is desirable that there exists an essentially unique nonnegative factorization that explains the data, i.e. model parameters are identifiable from the data. We say essentially unique because nonnegative matrix factorizations are never completely unique: Given a nonnegative factorization $AB$ one obtains a new factorization $(AC)(C^{-1}B)$ by multiplying $A$ and $B$ by a scaling or permutation matrix and its inverse correspondingly.

The uniqueness of nonnegative factorizations was first addressed by Donoho and Stodden~\cite{donoho2004does} for black and white images with $P$ parts such that each part can appear in $A$ articulations. They showed that separability and complete factorial sampling guarantee uniqueness of nonnegative matrix factorization. Separability requires that one of the factors contains the $r \times r$ identity matrix as a submatrix and complete factorial sampling requires that the database contains all $A^P$ images where each of the $P$ parts appears in each of the $A$ articulations. Another sufficient condition appears in the work of Gillis~\cite{gillis2012sparse} and it requires $M$ to have $r$ nonzero columns each with $r-1$ zero entries with different sparsity patterns. Theis et al~\cite{theis2005first} prove uniqueness under a sparsity assumption on the nonnegative factors. Ding et al show that nonnegative matrix factorizations are unique assuming that one of the factors is orthogonal~\cite{ding2006orthogonal}. Many authors have established guarantees for identifiability under volume minimization or maximization of the polytope associated to one of the factors~\cite{chan2009convex,wang2009nonnegative,
fu2015blind,lin2015identifiability,fu2018identifiability}.  The first necessary condition was given by Laurberg et al~\cite{laurberg2008theorems} and it requires the rows of $A$ and columns of $B$ to be boundary closed. More precisely, for every $i \neq j \in [r]$ there must exist a row $a_k$ of $A$ such that $a_{ki}=0$ and $a_{kj} \neq 0$ (and similarly for columns of $B$).  A comprehensive review on uniqueness of nonnegative matrix factorizations is given by Fu et al~\cite{fu2019nonnegative}. Despite the recent progress on uniqueness of nonnegative matrix factorizations, the current sufficient conditions are either relatively restrictive or require additional assumptions on nonnegative factors, and a little is known about necessary conditions.

The goal of this paper is to study the uniqueness of nonnegative matrix factorizations by building on the rigidity theory of bar and joint frameworks, which studies uniqueness of point configurations given some pairwise distances between the points.  This approach has been already successfully adapted to investigating the uniqueness of low-rank matrix completion~\cite{singer2010uniqueness}. Similarly to rigidity theory, we define infinitesimally, locally and globally rigid nonnegative matrix factorizations. We consider the case when nonnegative rank is equal to rank. Before going into more details, we give a brief overview of the implications between these notions. Global rigidity is the same as the uniqueness of a nonnegative matrix factorization. Local rigidity is a necessary condition for global rigidity and infinitesimal rigidity is a sufficient condition for local rigidity. We give a characterization of infinitesimal rigidity that can be checked computationally (Proposition~\ref{theorem:infinitesimally_rigid_factorizations}). We show that infinitesimal rigidity implies local rigidity and that a locally rigid nonnegative matrix factorization that is not infinitesimally rigid implies that the Kruskal rank of a specified matrix is not maximal (Proposition~\ref{prop:generic_locally_rigid_is_infinitesimally_rigid}). These results lead to Algorithm~\ref{algorithm:local_rigidity} for determining local rigidity of a nonnegative matrix factorization (one possible output of the algorithm is that local rigidity of the matrix cannot be determined) and to a necessary condition for uniqueness of nonnegative factorizations that strengthens the necessary condition in~\cite[Theorem 3]{laurberg2008theorems} (Corollary~\ref{corollary:necessary_condition_for_global_rigidity_combinatorial}). A next step will be to use rigidity theory to study sufficient conditions for global rigidity. To do this, we believe that  one has to come up with an analogue of a stress matrix in rigidity theory, similarly as we have developed an analogue of a rigidity matrix in this work.

In more detail, in the rigidity theory of frameworks and low-rank matrix completion, infinitesimal motions are required to have derivatives of pairwise distances or inner products equal to zero. A framework is called infinitesimally rigid if all its infinitesimal motions are trivial ones. Checking infinitesimal rigidity is equivalent to checking rank of a rigidity or a completion matrix. In the nonnegative matrix factorization case, infinitesimal motions are additionally required to preserve nonnegativity of factors. Now checking infinitesimal rigidity amounts to checking whether positive span of a matrix is isomorphic to a specified linear subspace of $\mathbb{R}^{r^2}$ (Proposition~\ref{theorem:infinitesimally_rigid_factorizations}). The difference with the frameworks and low-rank matrix completion case is that instead of linear span one has to consider positive span of a rigidity matrix. Hence a linear algebra problem becomes a polyhedral geometry problem. We also give purely combinatorial necessary conditions for infinitesimal rigidity that follow from this characterization (Theorem~\ref{prop:Wpoint}, Lemmas~\ref{lemma:max_number_entries} and~\ref{lemma:zero rectangles}). 
Infinitesimal rigidity always implies local rigidity, and although the converse is not always true as we will see in Example~\ref{example:Shitov}, then if a nonnegative factorization is locally rigid and a certain matrix achieves its maximal possible Kruskal rank, then it is infinitesimally rigid  (Proposition~\ref{prop:generic_locally_rigid_is_infinitesimally_rigid}).  We also show that every locally rigid nonnegative factorization can be extended to globally rigid nonnegative factorization by adding at most $r$ strictly positive rows to $A$ and at most $r$ strictly positive columns to $B$ (Corollary~\ref{cor:locally_rigid_to_globally_rigid}).

Matrices  of size $m \times n$  and nonnegative rank at most $r$ form a semialgebraic set, which we denote by $\mathcal{M}^{m \times n}_{\leq r}$.  
We explore connections between rigidity of nonnegative matrix factorizations and boundaries of the set $\mathcal{M}^{m \times n}_{\leq r}$. The first motivation for this is that a matrix with a unique nonnegative matrix factorization always lies on the boundary of $\mathcal{M}^{m \times n}_{\leq r}$. The second motivation is that understanding boundaries of a semialgebraic set is often easier than deriving a semialgebraic description of the set, and sometimes boundaries provide the first step towards obtaining a semialgebraic description. This was the case for matrices of nonnegative rank at most three~\cite{kubjas2015fixed}. This semialgebraic description gives an algorithm, polynomial in $m$ and $n$, to decide if a rank-three matrix has nonnegative rank three by checking one condition for each possible boundary component.  A semialgebraic description of the set $\mathcal{M}^{m \times n}_{\leq r}$ would in general allow one to check directly whether a matrix has nonnegative rank at most $r$ without constructing a nonnegative factorization of the matrix. Neither boundaries nor a semialgebraic description of $\mathcal{M}^{m \times n}_{\leq r}$ is known for $r \geq 4$. Vavasis showed that computing nonnegative rank is NP-hard~\cite{vavasis2009on} and the best known algorithm for deciding whether an $m\times n$ matrix has nonnegative rank at most $r$ runs in time $(mn)^{O(r^2)}$ by the work of Moitra~\cite{moitra2016an}. A necessary and sufficient condition for a matrix to lie on the boundary of $\mathcal{M}^{m \times n}_{\leq 3}$ is that it contains a zero or all its size three nonnegative factorizations are infinitesimally rigid \cite{mond2003stochastic}.  This is not true for $r>3$. Example~\ref{example:Shitov}  provides a nonnegative matrix factorization that is locally and globally rigid, and hence on the boundary, but not infinitesimally rigid. Furthermore, in Section~\ref{sec:MSvS_example} we will see matrices on the boundary of $\mathcal{M}^{m \times n}_{\leq r}$ with nonnegative factorizations that are not even locally rigid.

We finish the paper with extending our results to completely positive factorizations. Let $M$ be a nonnegative real symmetric matrix. The completely positive rank of $M$ is the smallest $r$ such that $M=AA^T$ for some nonnegative $n\times r$ matrix $A$~\cite{abraham2003completely}. We consider real symmetric matrices whose completely positive rank is equal to their rank. We define infinitesimally, locally and globally rigid completely positive factorizations, and show that results analogous to the nonnegative factorizations case hold.

The outline of our paper is the following. In Section~\ref{sec:preliminaries}, we give preliminaries on rigidity theory (Section~\ref{sec:rigidity_theory}),  geometric characterizations of nonnegative rank via nested polytopes (Section~\ref{sec:geometric_characterization}) and nonnegative rank boundaries (Section~\ref{sec:boundaries}). In Section~\ref{sec:rigid}, we study infinitesimally rigid factorizations.  In Section~\ref{sec:isolated}, we study locally rigid nonnegative factorizations.   In Section~\ref{sec:rigidity_and_boundaries}, we study connections between rigidity and boundaries of $\mathcal{M}^{m \times n}_{\leq r}$.  In Section \ref{sec:symmetric}, we adapt these results on nonnegative rank of general matrices, to the case of completely positive rank on symmetric matrices. In Appendix~\ref{sec:realizations_of_infinitesimally_rigid_factorizations}, we show that in the case of $5 \times 5$ matrices of nonnegative rank four, for every zero pattern that satisfies the necessary condition in Theorem~\ref{prop:Wpoint}, there exists an infinitesimally rigid nonnegative factorization $(A,B)$ that realizes the zero pattern.  Code for computations in this paper is available at
\begin{center}
\url{https://github.com/kaiekubjas/nonnegative-rank-four-boundaries}
\end{center}

\section{Preliminaries} \label{sec:preliminaries}

\subsection{Rigidity theory} \label{sec:rigidity_theory}

The goal of rigidity theory is to determine whether $n$ points in $\mathbb{R}^d$ can be determined uniquely up to rigid transformations (translations, rotations, reflections) given a partial set of pairwise distances between them. We will introduce rigidity theory following~\cite[Section 2]{singer2010uniqueness} and discuss the connection between the rigidity theory and uniqueness of low-rank matrix completions established by Singer and Cucuringu~\cite[Sections 3 and 4]{singer2010uniqueness}. This subsection can be skipped at the first reading and used as a reference.

A bar and joint framework $G(p)$ in $\mathbb{R}^d$ consists of a graph $G=(V,E)$, a set of distances $\{d_{ij} \in \mathbb{R}_{\geq 0}:(i,j) \in E\}$ and a set of points $p_1,\ldots,p_{|V|} \in \mathbb{R}^d$ such that $\|p_i-p_j\| = d_{ij}$ for all $(i,j) \in E$. One can think of the distance constraints as bars that are joining corresponding points. Consider a motion of the bar and joint framework parametrized by $t$, i.e. $p_i(t)$ is the position of the $i$-th point at time $t$. To preserve the distances given by $E$, the motion has to satisfy
$$
\frac{d}{dt} \|p_i-p_j\|^2 = 0 \text{ for all } (i,j) \in E.
$$
Denoting the velocity of $p_i$ by $\dot{p}_i$ for $i=1,\ldots,|V|$, these constraints can be rewritten as
\begin{equation} \label{eqn:infinitesimally rigid}
(p_i-p_j)^T(\dot{p}_i - \dot{p}_j)=0 \text{ for all } (i,j) \in E,
\end{equation}
or in the matrix form as $R_G(p) \dot{p}=0$ where $R_G(p)$ is a $|E| \times n|V|$ matrix and $\dot p = (\dot{p}^T_1,\ldots,\dot{p}^T_n)^T$. The matrix $R_G(p)$ is called the rigidity matrix of the bar and joint framework.

A motion satisfying Equation~(\ref{eqn:infinitesimally rigid}) is called an infinitesimal motion. Trivial motions are motions given by rotation and translation of the entire framework, also referred to as rigid transformations.  A trivial motion satisfies $\dot{p}_i = D p_i + b$  with $D \in \mathbb{R}^{d \times d}$ skew-symmetric and $b \in \mathbb{R}^d$, and every trivial motion is infinitesimal. A bar and joint framework is called infinitesimally rigid if all its infinitesimal motions are trivial. There are $\frac{(d-1)d}{2}$ degrees of freedom choosing a skew-symmetric matrix $D$ (rotations) and $d$ degrees of freedom choosing a vector $b$ (translations).  Every trivial motion is in the kernel of the rigidity matrix $R_G(p)$, so the framework is infinitesimally rigid if and only if the dimension of the kernel of the rigidity matrix $R_G(p)$ is equal to $\frac{d(d+1)}{2}$.

A framework $G(p)$ is locally rigid if there exists a neighborhood $\mathcal{N}$ of the framework $G(p)$ such that $G(p)$ is the only framework up to rigid transformations with the same distance constraints in the neighborhood $\mathcal{N}$. A framework $G(p)$ is regular  if $\rank R_G(p) = \max \{ \rank R_G(q):\|q_i-q_j\| = d_{ij} \text{ for all } (i,j) \in E\}$. 

\begin{theorem}[\cite{asimow1979rigidity}] \label{thm:Asimow_Roth}
A framework is infinitesimally rigid if and only if it is regular and locally rigid.
\end{theorem}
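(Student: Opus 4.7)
The plan is to work with the edge length map $f_G : \mathbb{R}^{d|V|} \to \mathbb{R}^{|E|}$ given by $f_G(q) = (\|q_i - q_j\|^2)_{(i,j) \in E}$, whose Jacobian at $p$ is (twice) the rigidity matrix $R_G(p)$. Under this correspondence the fiber $f_G^{-1}(f_G(p))$ is exactly the set of frameworks with the same distance constraints, and the orbit $\mathcal{T}(p)$ of the Euclidean group through $p$ records the trivial modifications of $p$. Assuming as usual that $p_1, \ldots, p_{|V|}$ affinely span $\mathbb{R}^d$, $\mathcal{T}(p)$ is a smooth submanifold of $\mathbb{R}^{d|V|}$ of dimension $\frac{d(d+1)}{2}$ lying inside the fiber, and its tangent space at $p$ is precisely the space of trivial infinitesimal motions, which is always contained in $\ker R_G(p)$. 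Both directions will then hinge on comparing $\dim \ker R_G(p)$ with $\frac{d(d+1)}{2}$.

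For the forward direction, suppose $p$ is infinitesimally rigid. Because every trivial infinitesimal motion lies in $\ker R_G(q)$ for any $q$ whose points affinely span $\mathbb{R}^d$, we always have $\rank R_G(q) \leq d|V| - \frac{d(d+1)}{2}$, and infinitesimal rigidity of $p$ is exactly the assertion that $R_G(p)$ attains this bound. Hence $\rank R_G(p)$ is maximal on the fiber, so $p$ is regular. For local rigidity I would apply the constant rank theorem at $p$: the fiber $f_G^{-1}(f_G(p))$ is locally a smooth submanifold of dimension $\dim \ker R_G(p) = \frac{d(d+1)}{2}$. Since $\mathcal{T}(p)$ is a smooth submanifold of the fiber of the same dimension, the two coincide in a neighborhood of $p$, which is exactly local rigidity.

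For the converse, argue by contrapositive: suppose $p$ is not infinitesimally rigid, so $\ker R_G(p)$ strictly contains the trivial tangent space and $\dim \ker R_G(p) > \frac{d(d+1)}{2}$. Regularity bounds $\rank R_G$ from above along the fiber, and combined with the lower semicontinuity of matrix rank (which gives an opposite bound on a neighborhood of $p$ in the ambient space) this forces $\rank R_G$ to be constant on a neighborhood of $p$ in the fiber. The constant rank theorem then exhibits this neighborhood as a smooth manifold of dimension $\dim \ker R_G(p) > \frac{d(d+1)}{2}$, strictly larger than $\mathcal{T}(p)$. Therefore arbitrarily close to $p$ there exist frameworks with the same distance constraints that are not trivial modifications of $p$, contradicting local rigidity.

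The main obstacle is arranging the hypotheses of the constant rank theorem in the converse direction, since regularity only controls the rank of $R_G$ along the fiber whereas the theorem wants constancy on an open subset of $\mathbb{R}^{d|V|}$. The cleanest resolution is to interpret the argument algebro-geometrically: the fiber $f_G^{-1}(f_G(p))$ is a real algebraic variety, its smooth points are precisely those where $\rank R_G$ attains its maximum value on the fiber, and at such a smooth point the local dimension coincides with $\dim \ker R_G(p)$. Regularity puts $p$ among these smooth points, giving the required smooth manifold structure directly without needing constancy of rank on an ambient open set. A secondary issue is the affine span hypothesis, which is dispatched by restricting to the affine hull of $\{p_i\}$ since neither side of the equivalence sees the extra ambient directions.
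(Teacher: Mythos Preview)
The paper does not prove this theorem; it is simply quoted from Asimow and Roth with a citation, as background for the analogy the paper draws with nonnegative factorizations. So there is no proof in the paper to compare your proposal against.

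Your outline is the standard Asimow--Roth argument and the forward direction is fine. For the converse you correctly isolate the real difficulty: regularity, as the paper states it, controls $\rank R_G$ only along the fiber, not on an ambient open set, so the constant rank theorem does not apply directly. However, your proposed algebraic-geometry fix is not valid as written. The assertion that for a real level set $f^{-1}(c)$ the smooth points are exactly those where $\rank Df$ is maximal on the fiber, and that at such points the local dimension equals $\dim\ker Df$, fails in general: take $f(x,y)=x^2$ with fiber $\{x=0\}$; the Jacobian has rank $0$ everywhere on the fiber, yet the fiber is a $1$-manifold, not a $2$-manifold. So ``max rank on the fiber'' does not by itself give you a manifold of the claimed dimension.

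The clean way out is to use the definition of regular that Asimow and Roth actually use, namely $\rank R_G(p)$ maximal over \emph{all} configurations $q\in\mathbb{R}^{d|V|}$, not just those in the fiber. With that definition, lower semicontinuity of rank immediately gives constancy on an ambient open neighborhood of $p$, and the constant rank theorem applies in both directions with no detour through real algebraic geometry. The paper's phrasing (maximum over the fiber) is weaker; if you insist on that version you would need the more delicate stratification arguments from the original paper rather than the one-line claim you wrote.
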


A framework is called generic if the coordinates of the points $p_1,\ldots,p_{|V|}$ are algebraically independent over $\mathbb{Q}$. Any generic framework is regular. Theorem~\ref{thm:Asimow_Roth} implies that local rigidity is a generic property in the sense that if generic $G(p)$ is locally rigid then most frameworks $G(q)$ are locally rigid. Hence one can talk about local rigidity of graphs. This also allows one to check with probability one whether a framework is locally rigid by choosing a random configuration $p_1,\ldots,p_{|V|}$ and checking whether the dimension of the kernel of the rigidity matrix $R_G(p)$ is equal to $\frac{d(d+1)}{2}$.

Finally, a framework $G(p)$ is globally rigid if all other frameworks in $\mathbb{R}^d$ that have the same distance constraints are related to $G(p)$ by rigid transformations. Global rigidity is also a generic property, and there are necessary and sufficient results using ranks of stress matrices for checking generic global rigidity. However, since we focus on infinitesimal and local rigidity of nonnegative factorizations in this paper, we do not present them here.

Singer and Cucuringu established a connection between the rigidity theory and low-rank matrix completion~\cite{singer2010uniqueness}. Let $M$ be a $m \times n$ matrix of rank $r$ and let $(A,B)$ give a rank-$r$ factorization of $M$. Let  the rows of $A$ be $a^T_1,\ldots,a^T_m \in \mathbb{R}^r$ and the columns of $B$ be $b_1,\ldots,b_n \in \mathbb{R}^r$. Then $M_{ij} = a^T_i b_j$. 

The observed entries of $M$ define a bipartite graph $G=(V,E)$ on $m+n$ vertices. The vertices $V$ correspond to $a_1,\ldots,a_m,b_1,\ldots,b_n$ and the edges $E$ correspond to observed entries of $M$. Instead of distance constraints, one fixes inner products $M_{ij}=a^T_i b_j$ for $(i,j) \in E$. The graph $G$, the inner products $\{M_{ij} \in \mathbb{R}:(i,j) \in E\}$ and the points $a_1,\ldots,a_m,b_1,\ldots,b_n \in \mathbb{R}^r$ define a framework. Consider a deformation of a framework parametrized by $t$. To preserve the inner products $M_{ij}=a^T_i b_j$ for $(i,j) \in E$, the deformation has to satisfy
\begin{equation} \label{eqn:matrix_infinitesimally_rigid}
a^T_i \dot{b}_j + \dot{a}^T_i b_j = 0 \text{ for all } (i,j) \in E
\end{equation}
where $\dot{a}$ and $\dot{b}$ are velocities of $a$ and $b$. The same constraints can be written in a matrix form using the $r \times (m+n)$ completion matrix $C_G(a,b)$.

A deformation satisfying Equation~(\ref{eqn:matrix_infinitesimally_rigid}) is called an infinitesimal deformation. A trivial deformation is one given by $\dot{a}_i = D^T a_i$ and $\dot{b}_j = -D b_j$ with $D \in \mathbb{R}^{r \times r}$, and every trivial deformation is infinitesimal. The framework $G(a,b)$ is called infinitesimally completable if all its infinitesimal motions are trivial. Since there are $r^2$ degrees of freedom choosing an invertible matrix $D$ and every trivial deformation is in the kernel of the completion matrix $C_G(a,b)$, then a non-trivial infinitesimal deformation exists if and only if the dimension of the kernel of the completion matrix $C_G(a,b)$ is equal to $r^2$. 

A framework $G(p)$ is locally completable if there exists a neighborhood $\mathcal{N}$ of the framework $G(p)$ such that $G(p)$ is the only framework in the neighborhood $\mathcal{N}$ up to trivial deformations with the same inner products. As in the rigidity theory of bar and joint frameworks, local completability of a generic framework is equivalent to infinitesimal completability, and hence local completability is a generic property. Therefore one can talk about local completability of a bipartite graph. For the low-rank matrix completion problem this implies that although one does not know the factor matrices $A$ and $B$, one can check with probability one  whether a partial matrix is locally completable by checking whether the partial matrix with the same underlying graph constructed from generic $A$ and $B$ is locally completable.

A framework is globally completable if it is the only framework up to trivial deformations giving the same inner products. Singer and Cucuringu also conjecture a sufficient condition for global completability using rank of stress matrices.

In Sections~\ref{sec:rigid} and~\ref{sec:isolated}, we will establish the connection between rigidity theory and nonnegative matrix factorizations. Although a framework is defined similarly to the low-rank matrix completion setting, the definition of infinitesimal rigidity is different because of the nonnegativity requirement of the factorization. Essentially, a linear algebra problem becomes a convex geometry problem: Instead of computing the span of a completion matrix one has to compute  the conic hull of a factorization matrix.

\subsection{Geometric characterization of nonnegative rank} \label{sec:geometric_characterization}

Nonnegative rank can be characterized geometrically via nested polyhedral cones. We describe two equivalent constructions from the literature for matrices of equal rank and nonnegative rank.

The first description is due to Cohen and Rothblum~\cite{cohen1993nonnegative}. It defines $P$ as the convex cone spanned by the columns of $M$ and $Q$ as the intersection of $\mathbb{R}^{m}_{\geq 0}$ and the column span of $M$.  Let $(A,B)$ be a rank-$r$ factorization of $M$, and let $\Delta$ be the simplicial cone spanned by the columns of $A$.  Since $A$ and $M$ have the same column span, the cones $P$, $\Delta$ and $Q$ all span the same dimension-$r$ subspace of $\B R^m$.  If $A$ is nonnegative, then $\Delta$ is contained in the positive orthant, so $\Delta \subseteq Q$.  If $B$ is nonnegative then each column of $M$ is a conic combination of columns of $A$ with coefficients given by columns of $B$, hence $P \subseteq \Delta$.  Conversely, one can construct a size-$r$ nonnegative factorization $(A,B)$ from a dimension-$r$ simplicial cone $\Delta$ that is nested between $P$ and $Q$ by taking the generating rays of $\Delta$ to be the columns of $A$. Therefore the matrix $M$ has nonnegative rank $r$ if and only if there exists a simplicial cone $\Delta$ such that $P \subseteq \Delta \subseteq Q$. Gillis and Glineur defined the restricted nonnegative rank of $M$ as the smallest number of rays of a cone that can be nested between $P$ and $Q$~\cite{gillis2012geometric}, which is an upper bound on the nonnegative rank in the case that the rank and nonnegative rank differ.

The work of Vavasis~\cite{vavasis2009on} presents a second description of the same nested cones up to a linear transformation. Fix a particular rank factorization $(A,B)$ of $M$ (not necessarily nonnegative). All rank factorizations of $M$ have the form $(AC,C^{-1}B)$ where $C \in \mathbb{R}^{r \times r}$ is an invertible matrix.  Let $P$ be the cone spanned by the columns of $B$; let $\Delta$ be the cone spanned by the columns of $C$; let $Q$ be the cone that is defined by $\{x \in \mathbb{R}^r: Ax \geq 0\}$.  The linear map $A$ sends these three polyhedral cones to their counterparts in the first construction.

Zeros in a nonnegative factorization correspond to incidence relations between the three cones, $P$, $\Delta$ and $Q$. In particular, a zero in $A$ means that a ray of $\Delta$ lies on a facet of $Q$. A zero in $B$ means that a ray of $P$ lies on a facet of $\Delta$.

One often considers nested polytopes instead of nested cones. One gets nested polytopes from nested cones by intersecting the cones with an affine plane, which is usually defined by setting the sum of the coordinates to 1.

Below we present a different geometric picture to help understand when a rank-$r$ matrix has nonnegative rank $r$ and specifically when it lies on the boundary of the semialgebraic set.  We will however at times refer to the nested polytopes $P \subseteq \Delta \subseteq Q$.

\subsection{Nonnegative rank boundaries}
\label{sec:boundaries}
Fixing $m$, $n$ and $r$, let $\B R^{m\times n}$ denote the set of real $m\times n$ matrices, and $\B R^{m \times n}_{\leq r}$ the subset with rank at most $r$.  The set $\B R^{m \times n}_{\leq r}$ is algebraic, meaning it is cut out by polynomial equations on the entries, namely by the $(r+1)\times (r+1)$ minors.  There is an algebraic map
 \[ \mu: \B R^{m\times r} \times \B R^{r \times n} \to \B R^{m \times n} \]
given by matrix multiplication, and $\B R^{m \times n}_{\leq r}$ is its image.  Let $\C M^{m \times n}_{\leq r}$ be the subset of $\B R^{m \times n}_{\leq r}$ consisting of the matrices that also have nonnegative rank at most $r$.  This set is the image of $\mu$ restricted to the $m\times r$ and $r\times n$ matrices with nonnegative entries.  Certain combinations of these inequalities when mapped forward produce the polynomial inequalities that describe $\C M^{m \times n}_{\leq r}$ as a subset of $\B R^{m \times n}_{\leq r}$ (see Proposition \ref{prop:int}).  A set such as $\C M^{m \times n}_{\leq r}$ that is described by a finite number of polynomial equations and inequalities is called a {\em semialgebraic set}.  Its relative boundary has a finite number of (algebraic) {\em boundary components}, each where one of the defining inequalities attains equality.  The boundary components are themselves irreducible semialgebraic sets, each of dimension one lower than $\C M^{m \times n}_{\leq r}$.  Some of the boundary components of $\C M^{m \times n}_{\leq r}$ are straight-forward: for a matrix $M$ to have a nonnegative rank, each of its entries must be greater than or equal to zero.  These inequalities define the {\em trivial boundary components} of $\C M^{m \times n}_{\leq r}$.

Some boundary components of $\C M^{m \times n}_{\leq r}$ consist of matrices that have infinitesimally rigid factorizations.  Such factorizations are locally unique, so they are important for understanding which matrices have unique nonnegative factorizations.  Using the ideas of rigidity theory, we show in Section~\ref{sec:rigid} that infinitesimally rigid factorizations are characterized by certain patterns of zero entries in the factors.  We give several necessary conditions on zero patterns that can result in infinitesimally rigid factorizations.  These results generalize the previously known full characterization of such zero patterns for $r=3$ \cite{kubjas2015fixed}.  All boundary components of $\C M^{m \times n}_{\leq 3}$ come from infinitesimally rigid factorizations, and there is only one zero pattern up to row and column permutation and transposition.  For higher rank, characterizing these zero patterns is more complicated.  In addition, we show in Sections~\ref{sec:local_rigidity} and \ref{sec:MSvS_example} that for $r\geq 4$ there are other kinds of boundary components with no analogue in the rank 3 case, and some of these components do not lead to locally unique factorizations.

We will show in Section~\ref{sec:rigidity_and_boundaries} that when a matrix $M$ lies in the relative interior of $\C M^{m \times n}_{\leq r}$, the set of rank-$r$ nonnegative factorizations has the full dimension, so it is not uniquely decomposable.  On the other hand, if $M$ is positive and lies on the relative boundary, then the nonnegativity constraints cut down the set of nonnegative factorizations to lower dimension.  On some types of boundary components, the set of factorizations of $M$ is cut down to a single point, meaning the factorization is locally unique.  Moreover if $M$ lies on no other boundary components, this factorization is globally unique.  Understanding the boundary components of $\C M^{m \times n}_{\leq r}$ then also provides an understanding of which matrices have unique nonnegative factorizations.  The equations and inequalities describing the boundary components of each type provide semialgebraic conditions that can be checked on a matrix of rank $r$ to determine if it has a unique nonnegative rank-$r$ factorization.

\section{Infinitesimally rigid factorizations}\label{sec:rigid}

In this section, we will establish a connection between rigidity theory and nonnegative matrix factorizations. The setup is similar to the low-rank matrix completion case, although there are three main differences: The graph $G$ is always a complete bipartite graph, there are additional nonnegativity constraints, and the space of ``trivial'' deformations is much smaller. We will assume that nonnegative rank of a matrix is equal to its rank.

Let $G=(V,E)$ be  the complete bipartite graph on $m+n$ vertices. As before, the vertices $V$ correspond to $a_1,\ldots,a_m,b_1,\ldots,b_n$ and the edges $E$ correspond to the entries of a matrix $M$.  We consider an infinitesimal motion of a framework parametrized by $t$. In addition to preserving the inner products $M_{ij} = a^T_i b_j$ for all $i \in [m],j \in [n]$, also $a_i$ and $b_j$ need to stay positive. Hence an infinitesimal motion has to satisfy
\begin{eqnarray}
&a^T_i \dot{b}_j + \dot{a}^T_i b_j = 0 \text{ for } (i,j) \in [m] \times [n], \label{eqn:nonnegative_rigidity1}\\ 
&a_i + t\dot{a}_i \geq 0 \text{ for } i \in [m] \text{ and } t \in [0,\epsilon), b_j+t\dot{b}_j \geq 0 \text{ for } j \in [n] \text{ and } t \in [0,\epsilon) \label{eqn:nonnegative_rigidity2}
\end{eqnarray}
for some $\epsilon >0$. 

As before, let $A$ and $B$ be the rank-$r$ matrices with rows $a_1^T,\ldots,a_m^T$ and columns $b_1,\ldots,b_n$ respectively. Similarly, define $\dot{A}$ and $\dot{B}$ to be the matrices with rows $\dot{a}_1^T,\ldots,\dot{a}_m^T$ and columns $\dot{b}_1,\ldots,\dot{b}_n$ respectively. Then $M = AB$ and Equation~(\ref{eqn:nonnegative_rigidity1}) can be expressed as $A\dot{B} + \dot{A}B = 0$.  For the equation to hold, the column span of $\dot{A}$ must be contained in that of $A$ and similarly for the row spans of $\dot{B}$ and $B$.  Therefore $\dot{A} = AD_1$ and $\dot{B} = -D_2B$ for $r\times r$ matrices $D_1$ and $D_2$.  Moreover $-AD_2B + AD_1B = 0$ and the fact that $A$ and $B$ are full rank implies that $D_1 = D_2$.  Therefore every solution to Equation~(\ref{eqn:nonnegative_rigidity1}) has the form $\dot{a}_i = D^Ta_i$ and $\dot{b}_j = -D b_j$ with $D \in \mathbb{R}^{r \times r}$.  Conversely it can be checked that any $a_1,\ldots,a_m,b_1,\ldots,b_n$ with derivatives of this form satisfy Equation~(\ref{eqn:nonnegative_rigidity1}). The set of matrices $D \in \mathbb{R}^{r \times r}$ that define infinitesimal motions is 
 \[ W_{(A,B)} := \{D \in \mathbb{R}^{r \times r} \mid \exists \, \epsilon > 0 \text{ such that } A + tAD \geq 0, B-tDB \geq 0 \text{ for } t \in [0,\epsilon) \}.\]

If matrix $D$ is diagonal then $\dot{a}_i = D^Ta_i$ and $\dot{b}_j = -D b_j$ always define an infinitesimal motion, and such a motion is called trivial.

\begin{definition}\label{def:rigid}
A framework is {\em infinitesimally rigid} if all its infinitesimal motions are trivial. 
\end{definition}

An infinitesimal motion does not necessarily correspond to any actual smooth path through $(A,B)$ in the space of nonnegative factorizations of $M$, but only to a tangent direction that does not violate nonnegativity.
Thus infinitesimal rigidity is not a necessary (and also not a sufficient) condition for the uniqueness of a nonnegative matrix factorization. However, every infinitesimally rigid nonnegative factorization is locally rigid (Proposition~\ref{prop:zero-dim}) and local rigidity is a necessary condition for the uniqueness of a nonnegative matrix factorization. In fact, when Kruskal rank of a certain matrix is maximal possible, then a locally rigid nonnegative factorization is infinitesimally rigid (Proposition~\ref{prop:generic_locally_rigid_is_infinitesimally_rigid}). These results allow us to state in Section~\ref{sec:isolated} so far the strongest necessary condition for the uniqueness of a nonnegative factorization.

\begin{example} \label{example:nnr3_infinitesimal_rigidity}
A rank-3 matrix $M$ with positive entries is on the boundary of $\C M_{3}^{m\times n}$ if and only if all nonnegative factorizations of $M$ are infinitesimally rigid. This follows from the analysis of Mond, Smith and van Straten in  \cite[Lemma 4.3]{mond2003stochastic}. One can show that a size-3 infinitesimally rigid nonnegative factorization has up to permuting rows of $A$, permuting columns of $B$, simultaneously permuting columns of $A$ and rows of $B$, and switching $A$ and $B^T$ the following form
\begin{equation}\label{eqn:zero_pattern_3}
\begin{pmatrix}
0 & \cdot & \cdot\\
\cdot & 0 & \cdot\\
\cdot & \cdot & 0\\
\cdot & \cdot & 0\\
\cdot & \cdots & \cdot\\
\vdots & \ddots & \vdots\\
\cdot & \cdots & \cdot
\end{pmatrix}
\begin{pmatrix}
0 & \cdot & \cdot & \cdot & \cdots & \cdot\\
\cdot & 0 & \cdot & \vdots & \ddots & \vdots\\
\cdot & \cdot & 0 & \cdot & \cdots & \cdot
\end{pmatrix}.
\end{equation}
\end{example}

We will now study the set  $W_{(A,B)}$ of matrices $D \in \mathbb{R}^{r \times r}$ that define infinitesimal motions. The inequality $a_i + tD^Ta_i \geq 0$ is trivially satisfied for $t \in [0,\epsilon)$ and some $\epsilon >0$ for all positive coordinates of $a_i$. Hence a row $a^T_i$ of $A$ defines an inequality on the $j$th column of $D$ if and only if the $j$th coordinate of $a_i$ is zero. The corresponding inequality is $d^T_j a_i \geq 0$ where $d_j$ denotes the $j$th column of $D$. For each $i = 1,\ldots,m$, let $S_i \subseteq \{1,\ldots,r\}$ be the set of entries of $a_i$ that are zero. Then $D \in W_{(A,B)}$  satisfies $d^T_j a_i \geq 0$ for all $j \in S_i$.  Equivalently $\inner{ a_i e^T_j, D} \geq 0$, where $\inner{\cdot,\cdot}$ denotes entry-wise inner product on $r\times r$ matrices.

On the other hand, a column $b_i$ of $B$ defines an inequality on the $j$th row of $-D$ if and only if the $j$th coordinate of $b_i$ is zero. This inequality is $-d'_j  b_i  \geq 0$ where $d'_j$ denotes the $j$th row of $D$. For each $i = 1,\ldots,n$, let $T_i \subseteq \{1,\ldots,r\}$ be the set of entries of $b_i$ that are zero. Then $D \in W_{(A,B)}$ satisfies $-d'_j b_i  \geq 0$ or equivalently $\inner{- e_j b^T_i , D} \geq 0$ for $j \in T_i$.

Hence $W_{(A,B)}$ is a polyhedral cone and we have described it in terms of its facet inequalities, but it will often be easier to work with its dual cone. For each $i = 1,\ldots,m$, define $\C A_i = \{a_i e^T_j \mid j \in S_i\}$, and for each $i = 1,\ldots,n$, define $\C B_i = \{- e_j b^T_i \mid j \in T_i\}$. Then
 \[ W_{(A,B)}^\vee = \cone(\C A_1 \cup \cdots \cup \C A_m \cup \C B_1 \cup \cdots \cup \C B_n). \]

\begin{proposition} \label{theorem:infinitesimally_rigid_factorizations}
A nonnegative factorization $(A,B)$ is infinitesimally rigid if and only if 
$ W_{(A,B)}^\vee$ is isomorphic to $\mathbb{R}^{r^2-r}$ (meaning $ W_{(A,B)}^\vee$ is an $(r^2-r)$-dimensional real vector space).
\end{proposition}

\begin{proof}

If the cone $W_{(A,B)}$ consists only of $r \times r$ diagonal matrices then the dual cone $W_{(A,B)}^\vee$ consists of all $r \times r$ matrices that are zero along the diagonal. This is a linear space of dimension $r^2-r$. Conversely, if $W_{(A,B)}$ contains other matrices, then its dimension is strictly larger than $r$. Hence the dimension of the largest subspace contained in $W_{(A,B)}^\vee$ is strictly less than $r^2-r$.
\end{proof}

Proposition~\ref{theorem:infinitesimally_rigid_factorizations} gives an algorithm for checking whether a nonnegative factorization is infinitesimally rigid. For example, open source tool \texttt{Normaliz}~\cite{Normaliz} allows one to compute the largest linear subspace contained in a cone given by its extremal rays. However, Proposition~\ref{theorem:infinitesimally_rigid_factorizations} does not give insight how to construct infinitesimally rigid nonnegative matrix factorizations. To solve this problem, we give a completely combinatorial necessary condition for a nonnegative matrix factorization to be infinitesimally rigid. In Appendix~\ref{sec:realizations_of_infinitesimally_rigid_factorizations}, we will use this result to construct  infinitesimally rigid nonnegative matrix factorizations for $5 \times 5$ matrices of nonnegative rank four, which is the first nontrivial case.

\begin{theorem}\label{prop:Wpoint}
 If $(A,B)$ is an infinitesimally rigid nonnegative rank-$r$ factorization then
 \begin{itemize}
  \item $A$ and $B$ have at least $r^2-r + 1$ zeros in total and
  \item for every distinct pair $i, j$ taken from $1,\ldots,r$, there must be a row of $A$ with a zero in position $i$ and not in position $j$.  Similarly for the columns of $B$.
 \end{itemize}
\end{theorem}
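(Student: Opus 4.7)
The plan is to derive both bullets directly from Proposition~\ref{theorem:infinitesimally_rigid_factorizations}: infinitesimal rigidity means that the cone $W_{(A,B)}^\vee$ coincides with the $(r^2-r)$-dimensional linear subspace $L \subseteq \mathbb{R}^{r\times r}$ of matrices with vanishing diagonal. First I would verify that the inclusion $W_{(A,B)}^\vee \subseteq L$ is automatic by checking the generators directly: the $(p,p)$-entry of $a_k e_j^T$ is $(a_k)_p \delta_{jp}$, which equals zero at $p=j$ because $j \in S_k$; the same check disposes of $-e_j b_k^T$. Consequently the total number of conic generators of $W_{(A,B)}^\vee$ equals the total number of zero entries of $A$ and $B$.

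For the first bullet I would invoke the general fact that a cone coinciding with a linear subspace of dimension $d$ requires at least $d+1$ generating rays: if $g_1,\ldots,g_N$ generate such a subspace as a cone then $-g_1$ must lie in $\cone(g_1,\ldots,g_N)$, yielding a nontrivial nonnegative relation among $g_1,\ldots,g_N$ and forcing $N \geq d+1$. Applied with $d = r^2 - r$, this gives the required lower bound on the number of zeros in $A$ and $B$.

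For the second bullet I would work one standard basis matrix $E_{ji} \in L$ (with $i \neq j$) at a time. The central observation is a sign dichotomy at position $(j,i)$: for a generator of the first family, $(a_k e_\ell^T)_{ji} = (a_k)_j\,\delta_{\ell i} \geq 0$, while for a generator of the second family, $(-e_\ell b_k^T)_{ji} = -\delta_{\ell j}\,(b_k)_i \leq 0$. Writing $E_{ji} = \sum_k \lambda_k g_k$ with $\lambda_k \geq 0$ and reading off the $(j,i)$-entry forces at least one $\mathcal A$-type generator to contribute strictly positively, i.e.\ some $k$ with $i \in S_k$ and $(a_k)_j > 0$; this is precisely a row of $A$ with a zero in position $i$ and a nonzero in position $j$. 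Running the same argument on $-E_{ji}$ forces a $\mathcal B$-type generator to contribute strictly negatively, i.e.\ a column of $B$ with a zero in position $j$ and a nonzero in position $i$. Letting $(i,j)$ range over all ordered pairs of distinct indices produces both stated combinatorial conditions (the condition on columns of $B$ arising after a relabeling swap of $i$ and $j$).

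I do not expect a substantive obstacle: once one notes the sign dichotomy the whole argument collapses to nonnegative linear algebra. The one place where care is needed is keeping the index conventions straight, in particular distinguishing the pair $(i,j)$ in the theorem statement from the matrix entry $(j,i)$ at which the crucial nonnegativity relation is read, and rephrasing the resulting condition on columns of $B$ into the form stated in the theorem.
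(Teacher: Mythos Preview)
Your proposal is correct and follows essentially the same approach as the paper: both arguments invoke Proposition~\ref{theorem:infinitesimally_rigid_factorizations} to identify $W_{(A,B)}^\vee$ with the $(r^2-r)$-dimensional space of zero-diagonal matrices, then use that a cone equal to a $d$-dimensional linear space needs at least $d+1$ generators, and finally exploit the sign dichotomy that $\mathcal A$-type generators are entrywise nonnegative while $\mathcal B$-type generators are entrywise nonpositive. Your version is more explicit about verifying the diagonal vanishing and about the index bookkeeping, but the underlying ideas are identical.
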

\begin{proof}
A nonnegative factorization $(A,B)$ being infinitesimally rigid is equivalent to $W_{(A,B)}^\vee \cong \B R^{r^2-r}$.  To express $\B R^{r^2-r}$ as the convex cone of a finite number of vectors requires at least $r^2-r+1$ vectors.  The size of the generating set defining $W_{(A,B)}^\vee$ is equal to the total number of zeros in $A$ and $B$.  
 
 The vectors coming from $A$ are nonnegative and the ones from $B$ are nonpositive.  If $W_{(A,B)}^\vee \cong \B R^{r^2-r}$, for each coordinate there must be at least one vector with a strictly positive value there, and one with a strictly negative value.  To get a positive value in coordinate $d_{ij}$ requires $A$ to have a row with zero in the $j$th entry and a non-zero value in the $i$th entry.  Similarly for columns of $B$.
\end{proof}

The second condition is a necessary condition for the uniqueness of nonnegative matrix factorization, see~\cite{laurberg2008theorems,gillis2012sparse}.

\begin{example}
Let $r=3$. The zero pattern~(\ref{eqn:zero_pattern_3}) is the unique zero pattern with seven zeros that fulfills the conditions in Theorem~\ref{prop:Wpoint}, up to allowed permutations.
\end{example}

We conclude this section with some properties of infinitesimally rigid nonnegative factorizations.

\begin{corollary} \label{corollary:infinitesimally_rigid_positive}
If $(A,B)$ is an infinitesimally rigid nonnegative rank-$r$ factorization with exactly $r^2 - r+1$ zeros, then $AB$ is strictly positive.
\end{corollary}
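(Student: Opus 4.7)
The plan is to exploit a tight uniqueness property: since $\dim W_{(A,B)}^\vee = r^2-r$ and the generator set has exactly $r^2-r+1$ elements, the generators linearly span the ambient space, so the space of linear dependencies among them is one-dimensional. A hypothetical zero entry of $AB$ will then manufacture a second, more sparsely supported dependency, and the comparison will force a contradiction.

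First I invoke Proposition~\ref{theorem:infinitesimally_rigid_factorizations} to identify $W_{(A,B)}^\vee$ with the linear space of $r\times r$ matrices having zero diagonal, in which each of the $r^2-r+1$ generators $a_i e_j^T$ (for $j\in S_i$) and $-e_k b_l^T$ (for $k\in T_l$) already sits. Since the generators' conic hull is itself a linear space, the negative of every generator must also be a conic combination of the others; combined with the one-dimensional dependency space, this forces the unique (up to positive scalar) relation $\sum c_\gamma g_\gamma = 0$ to have $c_\gamma > 0$ on every generator.

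Next, assuming for contradiction that $(AB)_{i_0 j_0}=0$, nonnegativity gives $S_{i_0}\cup T_{j_0}=\{1,\ldots,r\}$ since every coordinate must be a zero of either $a_{i_0}$ or $b_{j_0}$. Expanding $a_{i_0}b_{j_0}^T$ in two ways yields the identity
\[
\sum_{k\in S_{i_0}\setminus T_{j_0}}(b_{j_0})_k\, a_{i_0}e_k^T \;=\; \sum_{k\in T_{j_0}\setminus S_{i_0}}(a_{i_0})_k\, e_k b_{j_0}^T,
\]
in which every coefficient is strictly positive. The left-hand terms lie in $\mathcal{A}_{i_0}$ and the right-hand terms are negatives of generators in $\mathcal{B}_{j_0}$, producing a second linear dependency among the generators whose support is contained in $\mathcal{A}_{i_0}\cup \mathcal{B}_{j_0}$.

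Finally I compare the two dependencies. If the second were a nonzero scalar multiple of the first, every generator, in particular those from rows $i\ne i_0$ of $A$, would need a nonzero coefficient, but Theorem~\ref{prop:Wpoint} guarantees such outside generators exist for $r\ge 2$, contradicting the support condition. Hence the second dependency is identically zero, which forces $S_{i_0}\subseteq T_{j_0}$ and $T_{j_0}\subseteq S_{i_0}$, whence $S_{i_0}=T_{j_0}=\{1,\ldots,r\}$ and $a_{i_0}=0$. But then $\mathcal{A}_{i_0}$ contributes $r$ zero vectors, leaving at most $r^2-2r+1 < r^2-r$ useful generators to span the $(r^2-r)$-dimensional space $W_{(A,B)}^\vee$, the final contradiction. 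I expect Step~1, verifying strict positivity of the unique linear relation via the conic expression of $-g_i$, to be the subtlest point, since the second-dependency construction and counting are then fairly mechanical.
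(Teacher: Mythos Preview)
Your proof is correct and follows essentially the same approach as the paper's: both use that the $r^2-r+1$ generators of $W_{(A,B)}^\vee$ admit (up to scaling) a unique linear relation, and both manufacture from a hypothetical zero of $AB$ a second relation supported only on $\mathcal{A}_{i_0}\cup\mathcal{B}_{j_0}$ to reach a contradiction. You fill in two details the paper leaves implicit---the explicit verification that the unique relation has strictly positive coefficients, and the degenerate case where the second relation is trivially zero (forcing $a_{i_0}=0$)---but the architecture is the same.
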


\begin{proof}
If $(A,B)$ is infinitesimally rigid, then the dual cone $W^{\vee}_{(A,B)}$ is equal to the space of matrices with zero diagonal of dimension $r^2 - r$.  The zeros of $A$ and $B$ correspond to the elements of a distinguished generating set of $W^{\vee}_{(A,B)}$ as described above.  A generating set of size $r^2-r+1$ is minimal, so the only linear relation among the generators must be among all $r^2-r+1$.

If $AB$ has a zero in entry $ij$ then row $a_i$ of $A$ and column $b_j$ of $B$ have zeros in complementary positions so that $a_i \cdot b_j = 0$.  Since the support of $b_j$ is contained in the set of columns for which $a_i$ is zero, the outer product matrix $a_i^Tb_j^T$ can be expressed as a nonnegative combination of the dual vectors coming from $a_i$.  Similarly, the matrix $-a_i^Tb_j^T$ can be expressed as a nonnegative combination of the dual vectors coming from $b_j$.  Summing these gives a linear relation among a strict subset of the generators, which is a contradiction.
\end{proof}

\begin{corollary}\label{lemma:number_of_zeros_in_a_row}
If $(A,B)$ is an infinitesimally rigid nonnegative factorization, then there is at least one zero in every column of $A$ and in every row of $B$.
\end{corollary}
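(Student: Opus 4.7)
The plan is to prove both statements by a direct contradiction argument, using the explicit generators of $W_{(A,B)}^\vee$ described just before Proposition~\ref{theorem:infinitesimally_rigid_factorizations} together with the characterization that infinitesimal rigidity is equivalent to $W_{(A,B)}^\vee$ being all of the linear space of $r \times r$ matrices with zero diagonal.

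First I would suppose, for contradiction, that some column $k$ of $A$ is strictly positive, i.e.\ $k \notin S_i$ for every $i$. The key observation is that each generator $a_i e_j^T \in \C A_i$ has its $k$-th column equal to zero whenever $j \neq k$, so \emph{every} generator coming from $A$ has vanishing $k$-th column. On the other hand, every generator $-e_j b_i^T \in \C B_i$ has $k$-th column equal to $-b_{ik}e_j$, which is entry-wise nonpositive because $b_i$ is nonnegative. Hence any matrix lying in $W_{(A,B)}^\vee = \cone(\C A_1 \cup \cdots \cup \C B_n)$ must have a nonpositive $k$-th column. But for any $\ell \neq k$ the matrix $e_\ell e_k^T$ has zero diagonal and belongs to $\R^{r^2-r}$, so by infinitesimal rigidity it must lie in $W_{(A,B)}^\vee$ — yet its $k$-th column contains a $+1$, a contradiction. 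Therefore column $k$ of $A$ must contain a zero.

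The argument for rows of $B$ is symmetric: assuming row $\ell$ of $B$ has no zero forces $\ell \notin T_i$ for all $i$, so every $B$-generator $-e_j b_i^T$ has vanishing $\ell$-th row and every $A$-generator $a_i e_j^T$ has $\ell$-th row equal to $a_{i\ell}e_j^T \geq 0$. Then elements of $W_{(A,B)}^\vee$ must have nonnegative $\ell$-th row, contradicting the fact that $-e_\ell e_k^T \in \R^{r^2-r}$ should also lie in the cone for any $k \neq \ell$.

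There is no real obstacle here: the whole content is unpacking the sign structure of the two families of generators against the coordinate $e_\ell e_k^T$ (or its negative) that is required to live in $W_{(A,B)}^\vee$. The only thing to be careful about is making sure one uses the correct generator family (the $A$-side gives positive entries and constrains columns of $A$, while the $B$-side gives negative entries and constrains rows of $B$), and that the test matrix chosen to derive the contradiction has zero diagonal so that it indeed lies in $\R^{r^2-r}$.
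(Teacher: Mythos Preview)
Your argument is correct. The sign analysis of the generators of $W_{(A,B)}^\vee$ is exactly right, and the test matrices $e_\ell e_k^T$ and $-e_\ell e_k^T$ do lie in the space of zero-diagonal matrices, producing the desired contradiction.

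The paper's own proof is shorter: it simply invokes Theorem~\ref{prop:Wpoint}, whose second bullet already asserts that for every distinct pair $i,j$ there is a row of $A$ with a zero in position $i$ (and a nonzero in position $j$), from which the existence of a zero in each column of $A$ is immediate; likewise for rows of $B$. Your direct argument is essentially a self-contained reproof of that second bullet in the special case needed here, using the same underlying idea (the $A$-generators are entrywise nonnegative, the $B$-generators entrywise nonpositive, so hitting both signs in a fixed off-diagonal coordinate forces the relevant zero). So the two approaches coincide at the level of ideas; yours trades a one-line citation for an explicit unpacking of the sign structure.
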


\begin{proof}
It follows directly from Theorem~\ref{prop:Wpoint}.
\end{proof}

\begin{corollary}
If $M$ is strictly positive and $(A,B)$ is an infinitesimally rigid nonnegative rank-$r$ factorization of $M$, then there are at most $r-2$ zeros in every row of $A$ and in every column of $B$.
\end{corollary}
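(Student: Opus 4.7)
The plan is to argue by contradiction using Corollary~\ref{lemma:number_of_zeros_in_a_row}, which asserts that under infinitesimal rigidity every row of $B$ and every column of $A$ must contain a zero. Suppose, for contradiction, that some row $a_i^T$ of $A$ has at least $r-1$ zeros. Since $a_i$ cannot be the zero vector (else the $i$th row of $M=AB$ would vanish, violating strict positivity of $M$), it has exactly $r-1$ zeros, so $a_i = c\, e_k$ for some $c > 0$ and some index $k \in \{1,\ldots,r\}$.

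Next I would read off what this forces on $B$. For every $j \in [n]$ we have $M_{ij} = a_i^T b_j = c\, b_{kj}$, and since $M$ is strictly positive this gives $b_{kj} > 0$ for all $j$. Hence the $k$th row of $B$ contains no zero entry, which directly contradicts Corollary~\ref{lemma:number_of_zeros_in_a_row}. This rules out $r-1$ (or more) zeros in any row of $A$.

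The statement for columns of $B$ is symmetric: if some column $b_j$ of $B$ had at least $r-1$ zeros, then $b_j = c\, e_k$ for some $c > 0$ and $k$, and strict positivity of $M$ forces $a_{ik} > 0$ for all $i \in [m]$, contradicting the ``zero in every column of $A$'' half of Corollary~\ref{lemma:number_of_zeros_in_a_row}. There is no real obstacle here — once one notices that a row of $A$ with $r-1$ zeros is forced to be a scalar multiple of a standard basis vector, the conclusion is immediate from the previous corollary, so the whole argument is a short reduction rather than a new calculation.
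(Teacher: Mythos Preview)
Your argument is correct and follows essentially the same route as the paper: a row of $A$ with $r-1$ zeros is (up to scaling) a standard basis vector, which forces one row of $B$ to be strictly positive and contradicts Corollary~\ref{lemma:number_of_zeros_in_a_row}. Your write-up is slightly more explicit than the paper's, but the idea is identical.
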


\begin{proof}
Since $M$ is positive, no row of $A$ or column of $B$ can contain only zeros.
If a row of $A$ contains $r-1$ zeros, then there has to be a row of $B$ that does not contain any zero, because otherwise $AB$ would have a zero entry. This contradicts Corollary~\ref{lemma:number_of_zeros_in_a_row}. 
\end{proof}

\begin{lemma}\label{lemma:max_number_entries}
If $(A,B)$ is an infinitesimally rigid nonnegative rank-$r$ factorization with $r^2-r+1$ zeros, then there are at most $r-1$ zeros in every column of $A$ and in every row of $B$.
\end{lemma}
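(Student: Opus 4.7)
The plan is to exploit the fact that when $A$ and $B$ together have the minimum number $r^2-r+1$ of zeros, the generators of $W_{(A,B)}^\vee$ form a minimal positive spanning set of $\R^{r^2-r}$, so any linear relation among them must involve all $r^2-r+1$ generators with nonzero coefficients. A column of $A$ with $r$ or more zeros would force a linear dependence among only a strict subset of the generators, yielding a contradiction.

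First I would set up the linear algebra. By Proposition~\ref{theorem:infinitesimally_rigid_factorizations}, infinitesimal rigidity gives $W_{(A,B)}^\vee = \R^{r^2-r}$, so the $r^2-r+1$ generators positively span this space and the space of linear relations among them has dimension exactly $(r^2-r+1)-(r^2-r) = 1$. A strictly positive relation $\lambda$ exists because $0$ lies in the interior of the conic hull; being the unique relation up to scalar, $\lambda$ must have every coefficient nonzero. Indeed, if $\lambda_v = 0$ for some generator $v$, then the remaining $r^2-r$ generators would satisfy a positive relation on their own, so they would positively span their linear hull, which has dimension at most $r^2-r-1$; the full cone would then sit inside a closed half-space of $\R^{r^2-r}$, contradicting infinitesimal rigidity. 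This is the same minimality argument used in Corollary~\ref{corollary:infinitesimally_rigid_positive}.

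Now suppose for contradiction that column $k$ of $A$ has at least $r$ zeros and set $I_k = \{i : a_{ik} = 0\}$. The corresponding generators $\{a_i e_k^T : i \in I_k\}$ are supported on column $k$ with vanishing $(k,k)$-entry, so they all lie in a single $(r-1)$-dimensional subspace of $\R^{r\times r}$. Having $|I_k|\geq r$ vectors in this subspace forces a nontrivial linear dependence among them alone; extending by zero on all other generators gives a linear relation on the full generating set with some coefficients equal to zero, contradicting the previous step. The case of rows of $B$ is identical, using the generators $-e_j b_i^T$, which are supported on row $j$ with vanishing $(j,j)$-entry.

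The main obstacle is step one, namely showing that the unique linear relation $\lambda$ has strictly positive, and in particular nonzero, coefficients; this hinges crucially on the hypothesis that the total zero count achieves the lower bound $r^2-r+1$ of Theorem~\ref{prop:Wpoint}. Once this is established, the rest reduces to the one-line dimension count that an $(r-1)$-dimensional subspace cannot contain $r$ linearly independent vectors.
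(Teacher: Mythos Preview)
Your proof is correct and follows essentially the same approach as the paper: both argue that with exactly $r^2-r+1$ generators positively spanning $\R^{r^2-r}$ the unique (up to scalar) linear relation among them must involve every generator, and then observe that $r$ zeros in a single column of $A$ (or row of $B$) would put $r$ generators into an $(r-1)$-dimensional coordinate subspace, producing a relation supported on a strict subset. The paper simply cites the proof of Corollary~\ref{corollary:infinitesimally_rigid_positive} for the minimality step, whereas you spell it out in more detail; the dimension-count contradiction is identical.
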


\begin{proof}
As in the proof of Corollary \ref{corollary:infinitesimally_rigid_positive}, the only linear relation among the generators of $W^{\vee}_{(A,B)}$ must be among all $r^2-r+1$ generators.  If there were $r$ zeros in the same column of $A$, then there would be $r$ generators of $W^{\vee}_{(A,B)}$ contained in a $r-1$ dimensional subspace, implying a smaller linear relation which is impossible.  Similarly for the case of $r$ zeros in a row of $B$.
\end{proof}

This argument can be generalized to forbid other configurations of zeros that concentrate too many generators of $W^{\vee}_{(A,B)}$ into too small a support.
\begin{lemma}\label{lemma:zero rectangles}
 Let $(A,B)$ be an infinitesimally rigid nonnegative rank-$r$ factorization with $r^2-r+1$ zeros.  Let $\alpha,\beta \subseteq [r]$ and suppose $A$ has a $k \times |\alpha|$ submatrix of zeros with columns $\alpha$, and $B$ has a $|\beta| \times \ell$ submatrix of zeros with rows $\beta$.  Then
  \[ k|\alpha| + \ell|\beta| \leq (r-|\alpha|)|\alpha| + (r-|\beta|)|\beta| - |\alpha \setminus \beta||\beta \setminus \alpha|. \]
\end{lemma}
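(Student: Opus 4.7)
The plan is to generalize the dimension-counting argument used in the proof of Lemma~\ref{lemma:max_number_entries}. I would identify a linear subspace of the zero-diagonal matrices that contains all the generators of $W_{(A,B)}^\vee$ coming from the two specified zero blocks, bound its dimension by inclusion-exclusion, and then invoke the minimality of the generating set to conclude that those generators are linearly independent inside it.

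Let $I \subseteq [m]$ index the $k$ rows of the $A$-block, so that $a_i$ is supported on $[r] \setminus \alpha$ for every $i \in I$. Consequently, for $i \in I$ and $j \in \alpha$, the generator $a_i e_j^T \in \C A_i$ lies in
\[ U_1 := \mathrm{span}\{ e_{i'} e_j^T : i' \in [r] \setminus \alpha,\; j \in \alpha\}, \]
which has dimension $(r-|\alpha|)|\alpha|$. Symmetrically, the $\ell|\beta|$ generators $-e_j b_i^T$ coming from the $B$-block lie in
\[ U_2 := \mathrm{span}\{ e_j e_{j'}^T : j \in \beta,\; j' \in [r]\setminus\beta \}, \]
of dimension $|\beta|(r-|\beta|)$. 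A standard basis matrix $e_{i'}e_j^T$ lies in $U_1 \cap U_2$ exactly when the index conditions of both sets are satisfied, forcing $i' \in \beta \setminus \alpha$ and $j \in \alpha \setminus \beta$, so $\dim(U_1 \cap U_2) = |\alpha \setminus \beta|\cdot|\beta \setminus \alpha|$. Inclusion-exclusion then gives
\[ \dim(U_1+U_2) = (r-|\alpha|)|\alpha| + (r-|\beta|)|\beta| - |\alpha \setminus \beta|\cdot|\beta \setminus \alpha|, \]
which is the right-hand side of the claimed inequality.

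To finish, I would reuse the minimality observation from the proof of Corollary~\ref{corollary:infinitesimally_rigid_positive}: the $r^2 - r + 1$ generators of $W^\vee_{(A,B)}$ satisfy a unique linear relation and that relation has every coefficient nonzero, so any proper subset of the generators is linearly independent. The $k|\alpha| + \ell|\beta|$ block generators form such a proper subset, because otherwise every zero of $A$ would have to lie in the chosen $k$ rows at columns $\alpha$, which (together with Corollary~\ref{lemma:number_of_zeros_in_a_row} requiring a zero in every column of $A$) would force $\alpha = [r]$ and hence a zero row of $A$, incompatible with $\mathrm{rank}(A)=r$. Linear independence of $k|\alpha| + \ell|\beta|$ vectors inside $U_1 + U_2$ then yields the claimed bound. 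The main thing to verify carefully is the intersection dimension $|\alpha \setminus \beta| \cdot |\beta \setminus \alpha|$; the linear-independence step is a direct reuse of the minimality of the generating set already exploited in Corollary~\ref{corollary:infinitesimally_rigid_positive} and Lemma~\ref{lemma:max_number_entries}.
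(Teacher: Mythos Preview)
Your proposal is correct and follows essentially the same route as the paper: identify the coordinate subspace supporting the block generators, compute its dimension by inclusion--exclusion as $(r-|\alpha|)|\alpha| + (r-|\beta|)|\beta| - |\alpha\setminus\beta||\beta\setminus\alpha|$, and use minimality of the $r^2-r+1$ generators to conclude the block generators are linearly independent. The only difference is that you explicitly justify why the block generators form a \emph{proper} subset (via Corollary~\ref{lemma:number_of_zeros_in_a_row} forcing $\alpha=[r]$ and hence a zero row in $A$), a point the paper's proof leaves implicit.
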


\begin{proof}
As in the proof of Corollary \ref{corollary:infinitesimally_rigid_positive}, a generating set of size $r^2-r+1$ is minimal, so the only linear relation among the generators must be among all $r^2-r+1$.  It can be checked that the zeros of $A$ described above correspond to $k|\alpha|$ generators of $W^{\vee}_{(A,B)}$ supported on entries $([r] \setminus \alpha) \times \alpha$.  Similarly the zeros of $B$ corresponds to $\ell|\beta|$ generators supported on entries $\beta \times ([r] \setminus \beta)$.  The intersection of these two supports is $(\beta \setminus \alpha) \times (\alpha \setminus \beta)$.  The number of generators cannot exceed the number of entries they are supported on, which gives the inequality.
\end{proof}

Lemma \ref{lemma:max_number_entries} is the special case when $\alpha$ is a singleton and $\beta$ is empty or the reverse, and this case seems to be the most applicable condition when $r$ is small.

\section{Locally rigid factorizations}\label{sec:isolated}

\subsection{Definition and properties}

\begin{definition}
A nonnegative factorization $(A,B)$ is {\em locally rigid} if all nonnegative factorizations of $AB$ in a neighborhood of $(A,B)$ are obtained by scaling the columns of $A$ and rows of $B$.
\end{definition}

If a matrix has a unique size-$r$ nonnegative factorization, then this factorization has to be locally rigid. We recall that the second condition in Theorem~\ref{prop:Wpoint} is a necessary condition for the uniqueness of a nonnegative matrix factorization by~\cite[Theorem 3]{laurberg2008theorems}. In fact, it is a necessary condition for local rigidity of a nonnegative matrix factorization using the argument in~\cite[Remark 7]{gillis2012sparse}. 

It concludes from the definition of an infinitesimally rigid nonnegative factorization that all nonnegative factorizations in some neighborhood are obtained from scalings. 
\begin{proposition}\label{prop:zero-dim}
 If nonnegative factorization $(A,B)$ is infinitesimally rigid, then it is locally rigid.
\end{proposition}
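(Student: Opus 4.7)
The plan is to prove the contrapositive: if $(A,B)$ is not locally rigid, I will construct a non-diagonal element of $W_{(A,B)}$, which by Proposition~\ref{theorem:infinitesimally_rigid_factorizations} (i.e.\ the characterization that $W_{(A,B)}$ consists only of diagonal matrices) contradicts infinitesimal rigidity.

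First I would set up the limiting direction. If $(A,B)$ is not locally rigid, pick a sequence of nonnegative factorizations $(AC_k, C_k^{-1}B)$ converging to $(A,B)$ with $C_k \to I$ and each $C_k$ not a positive diagonal matrix (for $k$ large, a matrix near $I$ whose diagonal is positive gives a scaling precisely when it is diagonal, so the non-scaling hypothesis forces a nonzero off-diagonal part). Factor $C_k = E_k G_k$ with $E_k$ the diagonal part of $C_k$ (positive diagonal for $k$ large) and $G_k := E_k^{-1} C_k$; a direct computation shows $G_k - I$ has zero diagonal and is nonzero. Set $s_k := \|G_k - I\|$ and $H_k := (G_k - I)/s_k$, so $H_k$ is hollow with $\|H_k\| = 1$ and $s_k \to 0^+$. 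Passing to a subsequence, $H_k \to H$ with $H$ hollow and $\|H\| = 1$; in particular $H$ is not a diagonal matrix.

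Next I would show $H \in W_{(A,B)}$ by extracting derivatives from the two nonnegativity constraints. Since $AE_k$ and $E_k^{-1}B$ are positive scalings of $A$ and $B$, they share the zero patterns of $A$ and $B$ respectively. At a zero of $A$, nonnegativity of $AC_k = AE_k(I + s_k H_k)$ gives $s_k(AE_k H_k)_{ij} \geq 0$; dividing by $s_k$ and taking $k\to\infty$ yields $(AH)_{ij} \geq 0$. For $B$, expand $G_k^{-1} = I - s_k H_k + O(s_k^2)$; nonnegativity of $C_k^{-1}B = G_k^{-1}E_k^{-1}B$ at a zero of $B$ reads $-s_k(H_k E_k^{-1} B)_{ij} + O(s_k^2) \geq 0$, and dividing by $s_k$ and passing to the limit gives $-(HB)_{ij} \geq 0$. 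These are exactly the facet inequalities $\langle a_i e_j^T, H\rangle \geq 0$ and $\langle -e_j b_i^T, H\rangle \geq 0$ that define $W_{(A,B)}$, so $H \in W_{(A,B)}$ is a non-diagonal matrix, contradicting infinitesimal rigidity.

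The main technical subtlety is the decomposition $C_k = E_k G_k$ together with the normalization by $\|G_k - I\|$, which isolates a bounded-away-from-zero off-diagonal direction that survives the limit; the only other care needed is controlling the $O(s_k^2)$ remainder from inverting $G_k$, which is routine.
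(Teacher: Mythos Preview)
Your proof is correct. The paper itself does not give a detailed argument for this proposition; it simply remarks that ``it follows from the definition of an infinitesimally rigid nonnegative factorization that all nonnegative factorizations in some neighborhood are obtained from scalings'' and states the result. Your contrapositive argument via a sequential limit is a standard and clean way to make that assertion rigorous.

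One point worth highlighting, since it is the only genuinely nontrivial step: the factorization $C_k = E_k G_k$ that strips off the diagonal part before normalizing is essential. If you had instead normalized $C_k - I$ directly, the limit direction $H$ could land in the diagonal subspace (the scaling directions could dominate the off-diagonal perturbation in norm), and you would get no contradiction. Your decomposition guarantees $H$ is hollow and nonzero, hence genuinely non-diagonal, which is exactly what is needed to contradict $W_{(A,B)}$ consisting only of diagonal matrices. The control of the $O(s_k^2)$ remainder from inverting $G_k = I + s_k H_k$ is routine, as you note, since $\|H_k\|=1$ and $s_k \to 0$.
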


We will see in the next subsection that the converse is true if a certain matrix achieves its maximal possible Kruskal rank.

\begin{example} \label{example:nonnegative_rank_three_locally_rigid}
It follows from the discussion in Example~\ref{example:nnr3_infinitesimal_rigidity} that if $M$ lies on the boundary of $\C M_{3}^{m\times n}$, then all its nonnegative factorizations are locally rigid. This can be also seen using the geometric characterization of boundaries in~\cite[Corollary 4.4]{kubjas2015fixed}. Namely, a matrix with positive entries lies on the boundary of $\C M_{3}^{m \times n}$ if and only if for every nonnegative factorization of the matrix the corresponding geometric configuration satisfies that (i)~every vertex of the intermediate triangle lies on an edge of the outer polygon, (ii)~every edge of the intermediate triangle contains a vertex of the inner polygon, (iii)~a vertex of the intermediate triangle coincides with a vertex of the outer polygon or an edge of the intermediate triangle contains an edge of the inner polygon.  Such geometric configurations are isolated for fixed inner and outer polygons, hence the corresponding nonnegative factorizations are locally rigid.
\end{example}

In the rest of the subsection, we will explore modifications of locally rigid nonnegative matrix factorizations.

\begin{lemma}  \label{lemma:locally_rigid_remove_nonzero}
Let $(A,B)$ be a locally rigid factorization. Let $(A',B')$ be a factorization that is obtained from $(A,B)$ by erasing all rows of $A$ and columns of $B$ that do not contain any zero entries. Then $(A',B')$ is locally rigid. 
\end{lemma}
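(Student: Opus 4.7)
The plan is to argue by contradiction. Suppose $(A', B')$ is not locally rigid; then there exists a sequence of nonnegative factorizations $(A'_k, B'_k) \to (A', B')$ of $A' B'$, none of which arises from $(A', B')$ by scaling the columns of $A'$ and the rows of $B'$. The strategy is to lift this sequence to a sequence of nonnegative factorizations of $AB$ converging to $(A, B)$ and then invoke the local rigidity of $(A, B)$ to derive a contradiction.

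For the lift I use the standard parametrization of rank-$r$ factorizations. The matrix $A$ has full column rank $r$ and $B$ has full row rank $r$ because $\rank(AB) = r$. In the nondegenerate situation where $A'$ and $B'$ also retain rank $r$, the matrix $A'B'$ has rank $r$, its column span equals that of $A'$, and every rank-$r$ factorization of $A'B'$ is of the form $(A'C, C^{-1}B')$ for some invertible $C$. Hence I can write $A'_k = A'C_k$ and $B'_k = C_k^{-1}B'$ with $C_k \to I$, and then set $A_k := AC_k$ and $B_k := C_k^{-1}B$, so that $A_k B_k = AB$.

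The crucial verification is that $(A_k, B_k)$ is nonnegative for all sufficiently large $k$. For a row $a_i^T$ of $A$ that survives in $A'$, the corresponding row of $A_k$ is a row of $A'C_k = A'_k$, which is nonnegative by hypothesis. For a row $a_i^T$ of $A$ that was erased, $a_i^T$ is by construction strictly positive entrywise, so $a_i^T C_k \to a_i^T > 0$ remains nonnegative once $k$ is large. The same argument, applied to columns, gives nonnegativity of $B_k$. Thus $(A_k, B_k)$ is a sequence of nonnegative factorizations of $AB$ tending to $(A, B)$.

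By local rigidity of $(A, B)$, for large $k$ the factorization $(A_k, B_k)$ must equal $(AD_k, D_k^{-1}B)$ for some positive diagonal matrix $D_k$. Since $A$ has full column rank, $AC_k = AD_k$ forces $C_k = D_k$, so $C_k$ is diagonal. But then $(A'_k, B'_k) = (A'D_k, D_k^{-1}B')$ is itself a scaling of $(A', B')$, contradicting the choice of sequence. The main subtle point is the parametrization step, which relies on $A'$ and $B'$ retaining rank $r$ after the erasures; this is consistent with the paper's blanket hypothesis that nonnegative rank equals rank, and if it were to fail one would first reduce to the full-rank case by restricting to the appropriate stratum of size-$r$ factorizations before applying the same lifting construction.
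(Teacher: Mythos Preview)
Your argument is correct and follows essentially the same route as the paper: both proofs parametrize nearby factorizations of $A'B'$ as $(A'C,C^{-1}B')$ with $C$ close to $I$, observe that the erased rows of $A$ (respectively columns of $B$) are strictly positive and hence remain nonnegative after multiplying by $C$ (respectively $C^{-1}$), and conclude that $(AC,C^{-1}B)$ would be a nearby nonnegative factorization of $AB$, contradicting local rigidity of $(A,B)$. The only difference is cosmetic---the paper gives an explicit $\varepsilon$ (namely $\varepsilon = c_{\min}/(c_{\min}+(r-1)c_{\max})$) guaranteeing $a_i^T C \geq 0$ for the strictly positive rows $a_i^T$, whereas you invoke continuity via a sequence; the implicit hypothesis that $A'$ and $B'$ retain rank $r$ is present in both arguments.
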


We will postpone proof of Lemma~\ref{lemma:locally_rigid_remove_nonzero} until Section~\ref{sec:geometry_of_factorizations} where we take a more geometric view on rigidity.

\begin{lemma}\label{lem:extend}
Let $(A,B)$ be a nonnegative factorization.  For $\varepsilon > 0$ small enough, there exists $A'$ obtained from $A$ by adding at most $r$ strictly positive rows and $B'$ obtained from $B$ by adding at most $r$ strictly positive columns such that any nonnegative factorization of $A'B'$ is in the $\varepsilon$-neighborhood of $(A'P,P^{-1}B')$ for some $r\times r$ scaled permutation matrix $P$.
\end{lemma}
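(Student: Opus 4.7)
The plan is to augment $A$ by $r$ strictly positive rows approximating the standard basis covectors $e_i^\top \in \mathbb{R}^r$ and to augment $B$ by $r$ strictly positive columns approximating the basis vectors $e_j \in \mathbb{R}^r$, each perturbed by a small multiple $\delta\mathbf{1}$ to keep positivity. The new nonnegativity constraints these rows and columns impose on the parameter of the factorization will collapse, in the limit $\delta\to 0$, onto the discrete set of scaled permutation matrices, and a compactness argument will convert this into the desired quantitative $\varepsilon$-neighborhood statement.

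First I would parametrize the factorizations: since $(A,B)$ is a rank factorization of $M=AB$ and adding rows to $A$ and columns to $B$ preserves the rank, every rank-$r$ nonnegative factorization of $A'B'$ must have the form $(A'C, C^{-1}B')$ for some invertible $C\in\mathbb{R}^{r\times r}$. Next I would construct the extensions: for $\delta>0$ to be fixed later, set $a'_i := e_i^\top + \delta\mathbf{1}^\top$ for $i=1,\ldots,r$ and $b'_j := e_j + \delta\mathbf{1}$ for $j=1,\ldots,r$, and define $A'$ by stacking the $a'_i$ beneath $A$ and $B'$ by appending the $b'_j$ to $B$. Translating nonnegativity, the new block $a'_i C\geq 0$ amounts to $C_i + \delta\sum_k C_k \geq 0$ componentwise for each row $C_i$ of $C$, and the new block $C^{-1}b'_j\geq 0$ amounts to $D^j + \delta\sum_k D^k \geq 0$ componentwise for each column $D^j$ of $D:=C^{-1}$. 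As $\delta\to 0$ these degenerate to $C\geq 0$ and $C^{-1}\geq 0$, and the classical fact that a nonnegative matrix with a nonnegative inverse is a scaled permutation matrix characterizes the limit solution set exactly.

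To pass from this qualitative limit to the required quantitative statement, I would normalize $C$ to eliminate the positive-scaling freedom—say, rescale the columns of $C$ so that each column of $A'C$ sums to $1$—which reduces the set of scaled permutations to the finite discrete set of $r\times r$ permutation matrices. The $\delta$-perturbed constraint set on normalized $C$ is then compact: the inequalities $C_i+\delta\sum_k C_k\geq 0$ force entries of $C$ to be bounded below by $-O(\delta)$, while the column-sum normalization bounds them above. A contradiction argument (take $\delta_n\to 0$ and any sequence $C_n$ satisfying the perturbed constraints that stays at distance $\geq\varepsilon$ from every permutation; extract a convergent subsequence) together with the $\delta=0$ characterization then shows that for $\delta$ small enough every valid $C$ is within $\varepsilon$ of some permutation matrix, and restoring the scaling yields a scaled permutation $P$ with $(A'C,C^{-1}B')$ in an $\varepsilon$-neighborhood of $(A'P,P^{-1}B')$.

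The main obstacle is this compactness/continuity step, and in particular the preservation of invertibility in the limit: a priori the $C_n$ could approach a singular matrix so that $C_n^{-1}$ blows up and the column-constraint on $D^j$ becomes vacuous. This is resolved by observing that any accumulation point $C_\infty$ of the normalized sequence is entrywise nonnegative, and the dual constraints $D_n^j+\delta_n\sum_k D_n^k\geq 0$ together with $A'C_n\cdot C_n^{-1}B'=A'B'$ (a fixed matrix) prevent $C_n^{-1}$ from escaping to infinity; hence $C_\infty^{-1}$ exists, is nonnegative, and $C_\infty$ is a scaled permutation. The remaining subtlety—that the limit set has $r!$ connected components, one per permutation—is handled by choosing, for each $C$ in the constraint set, the nearest permutation $P$, and by uniformity of the $\varepsilon$–$\delta$ estimate over the compact normalized constraint set.
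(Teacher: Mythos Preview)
Your approach is essentially the paper's, translated from geometric into algebraic language. The paper works with the nested-cone picture: since the intermediate cone for the factorization $(A,B)$ is the positive orthant, it appends to $A$ rows defining hyperplanes $\delta$-close to the coordinate facets and to $B$ columns that are points $\delta$-close to the unit vectors, so that the outer cone sits inside a $(1+\delta)$-copy and the inner cone contains a $(1-\delta)$-copy of the orthant; any simplicial cone squeezed between these must be close (up to permuting rays) to the orthant itself. Your rows $e_i^\top+\delta\mathbf 1^\top$ and columns $e_j+\delta\mathbf 1$ are precisely these hyperplanes and points, and your limiting characterization ``$C\ge 0$ and $C^{-1}\ge 0$ force $C$ to be a scaled permutation'' is the algebraic restatement of ``the only simplicial cone nested between the orthant and itself is the orthant.''

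Two small points. First, your normalization does not literally reduce the scaled permutations to the permutation matrices (fixing the column sums of $A'C$ determines one specific diagonal scaling per permutation), but it does leave a finite discrete set, which is all you need. Second, the invertibility-of-the-limit step is cleaner if, instead of trying to bound $C_n^{-1}$ directly, you normalize $C_n$ and $D_n=C_n^{-1}$ \emph{separately} so that the columns of $C_n$ and the rows of $D_n$ each lie in a fixed compact simplex; then $C_nD_n$ is a positive diagonal matrix rather than $I$, but any subsequential limit $(C_\infty,D_\infty)$ is a pair of nonnegative matrices with $C_\infty D_\infty$ diagonal and with every column of $C_\infty$ and every row of $D_\infty$ nonzero, from which the scaled-permutation conclusion follows. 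The paper's proof, being phrased geometrically, sidesteps this bookkeeping but is correspondingly terser.
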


\begin{proof}
Consider the geometric configuration of cones in $\mathbb{R}^r$ corresponding to the factorization $(A,B)$. Since $(A,B)$ is a nonnegative factorization, the intermediate cone is spanned by the unit vectors. We add $r$ strictly positive rows to $A$ that correspond to hyperplanes at most distance $\delta$ from the facets of the intermediate cone. We add $r$ strictly positive columns to $B$ that correspond to points that are at most distance $\delta$ from the vertices of the intermediate cone. Neither of these operations changes incidence relations between the three cones. The new outer cone is contained in $(1+\delta)$ times larger copy of the intermediate cone and the new inner cone contains a $(1-\delta)$ times smaller copy of the intermediate cone. For $\varepsilon$ small enough, there exists $\delta$ such that the only other cones with $r$ rays that one can be nested between a larger and smaller copy of the intermediate cone give factorizations that are in the $\varepsilon$-neighborhood of $(A'P,P^{-1}B')$.
\end{proof}

\begin{definition}
A nonnegative factorization $(A,B)$ is {\em globally rigid} if all nonnegative factorizations of $AB$  are obtained by scaling and permuting the columns of $A$ and rows of $B$.
\end{definition}

\begin{corollary} \label{cor:locally_rigid_to_globally_rigid}
Given a locally rigid nonnegative factorization $(A,B)$, then by adding at most $r$ strictly positive rows to $A$ and at most $r$ strictly positive columns to $B$, one can get a globally rigid nonnegative matrix factorization.
\end{corollary}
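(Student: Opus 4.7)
The plan is to apply Lemma~\ref{lem:extend} to produce an extension $(A',B')$ and then use the local rigidity of $(A,B)$ to upgrade the $\varepsilon$-approximate uniqueness of the lemma into exact uniqueness up to scaled permutation. I would first fix, using local rigidity of $(A,B)$, a common neighborhood size $\delta > 0$ such that for every permutation matrix $\pi \in S_r$, every nonnegative factorization of $AB$ within distance $\delta$ of $(A\pi,\pi^{-1}B)$ has the form $(A\pi D,D^{-1}\pi^{-1}B)$ for some diagonal $D$. This is possible because local rigidity of $(A,B)$ transfers to any trivial modification $(A\pi,\pi^{-1}B)$, and there are only $r!$ permutations so a common $\delta$ exists. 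I would then invoke Lemma~\ref{lem:extend} with $\varepsilon$ small enough that the restriction to the original block of any factorization within $\varepsilon$ of $(A'P,P^{-1}B')$ lands in the $\delta$-neighborhood of $(AP,P^{-1}B)$.

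Given an arbitrary nonnegative factorization $(A'',B'')$ of $A'B'$, decompose
\[ A'' = \begin{pmatrix} \tilde A \\ E_A \end{pmatrix}, \qquad B'' = \begin{pmatrix} \tilde B & E_B \end{pmatrix}, \qquad A' = \begin{pmatrix} A \\ E \end{pmatrix}, \qquad B' = \begin{pmatrix} B & F \end{pmatrix}. \]
Comparing blocks in $A''B'' = A'B'$ yields $\tilde A\tilde B = AB$, $\tilde A E_B = AF$, $E_A\tilde B = EB$, and $E_AE_B = EF$. Lemma~\ref{lem:extend} provides a scaled permutation $P = \pi D_0$ such that $(A'',B'')$ is within $\varepsilon$ of $(A'P,P^{-1}B')$; in particular $(\tilde A,\tilde B)$ is $\varepsilon$-close to $(AP,P^{-1}B)$, and by the first step and local rigidity there is a diagonal $D$ with $\tilde A = APD$ and $\tilde B = D^{-1}P^{-1}B$.

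The extended blocks are then forced by linear algebra. Since $A$ has rank $r$, the map $x \mapsto Ax$ is injective, so $\tilde A E_B = AF$ rewrites as $A(PDE_B) = AF$ and yields $E_B = D^{-1}P^{-1}F$. Dually, $B$ has rank $r$, so the map $X \mapsto XB$ on $r\times r$ matrices is injective, and $E_A\tilde B = EB$ gives $E_A = EPD$. The last block equation then becomes automatic. Hence $(A'',B'') = (A'(PD),(PD)^{-1}B')$, exhibiting $(A'',B'')$ as a scaled-permutation trivial modification of $(A',B')$, so $(A',B')$ is globally rigid.

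The main obstacle I anticipate is coordinating the two neighborhood scales in the setup step: one must ensure the single $\varepsilon$ from Lemma~\ref{lem:extend} simultaneously controls the restrictions for all $r!$ possible permutation types. This relies on local rigidity being a property of the trivial-modification orbit of $(A,B)$ combined with the finiteness of the permutation group, both of which are straightforward to verify.
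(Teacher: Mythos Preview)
Your overall strategy—restrict an arbitrary factorization of $A'B'$ to the original block, invoke local rigidity there, then push the conclusion back to the added rows and columns via injectivity of $A$ and $B$—is sound, and it is the natural way to flesh out what the paper leaves implicit (no proof is given in the paper; the corollary is stated as an immediate consequence of Lemma~\ref{lem:extend}).  The block-extension step is correct as written.

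There is, however, a genuine gap in how you coordinate the neighborhood sizes.  In your first step you fix a common $\delta$ valid for the $r!$ basepoints $(A\pi,\pi^{-1}B)$, appealing to finiteness of the permutation group.  But Lemma~\ref{lem:extend} only places $(A'',B'')$ within $\varepsilon$ of $(A'P,P^{-1}B')$ for a \emph{scaled} permutation $P=\pi D_0$, and the diagonal part $D_0$ ranges over the noncompact positive diagonal group.  The local-rigidity radius around $(A\pi D_0, D_0^{-1}\pi^{-1}B)$ is not $\delta$ but rather $\delta$ rescaled by the operator norms of $D_0$ and $D_0^{-1}$, and these are not controlled.  So ``finiteness of the permutation group'' is not what makes the step work; the noncompact scaling direction is the real obstacle you have to address.

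The fix is easy once identified.  Since global rigidity is a statement about orbits under scaled permutations, you may without loss of generality first replace $(A'',B'')$ by a positive diagonal rescaling so that, say, each column of $A''$ has the same norm as the corresponding column of $A'$.  After this normalization the matching $P$ from Lemma~\ref{lem:extend} has its diagonal part $D_0$ confined to a compact set (because the column norms of $A'P$ must be $\varepsilon$-close to those of $A''$), and then a uniform $\delta$ over that compact set of scaled permutations exists.  Equivalently, one can run the entire argument in the space of intermediate cones, as in the proof of Lemma~\ref{lem:extend} itself: local rigidity says the cone $\Delta$ is isolated among cones nested between $P$ and $Q$, and since the added rows and columns only shrink $Q$ and enlarge $P$, any intermediate cone for $A'B'$ is also nested between $P$ and $Q$; choosing $\varepsilon$ inside the isolation neighborhood of $\Delta$ finishes the proof without ever touching the scaling ambiguity.
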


\subsection{When is infinitesimal rigidity equivalent to local rigidity?}\label{sec:local_rigidity}

Let $Z_{(A,B)}$ be a matrix with columns equal to the elements of $\C A_1 \cup \cdots \cup \C A_m \cup \C B_1 \cup \cdots \cup \C B_n$. Let $c$ be the number of columns of $Z_{(A,B)}$. Let the Kruskal rank be the maximal value $k$ such that any $k$ columns are linearly independent. We denote the Kruskal rank of $Z_{(A,B)}$ by $\Krank(Z_{(A,B)})$. We will show that if $\Krank(Z_{(A,B)}) = \min(c,r^2-r)$, then local rigidity implies infinitesimal rigidity. This result can be seen as an adaptation of Theorem~\ref{thm:Asimow_Roth} by Asimow and Roth to nonnegative matrix factorizations.

\begin{proposition} \label{prop:generic_locally_rigid_is_infinitesimally_rigid}
If $(A,B)$ is a nonnegative factorization that is locally rigid but not infinitesimally rigid, then $\Krank(Z_{(A,B)}) < \min(c,r^2-r)$.
\end{proposition}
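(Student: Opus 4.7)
The plan is to prove the contrapositive: assuming $\Krank(Z_{(A,B)})=\min(c,r^2-r)$ and $(A,B)$ is not infinitesimally rigid, I will construct a one-parameter curve of genuinely distinct nonnegative factorizations of $AB$ emanating from $(A,B)$, witnessing the failure of local rigidity. The main tool is Gordan's theorem of alternatives applied to the generators $z_1,\ldots,z_c$ of $W_{(A,B)}^{\vee}$: exactly one of (i) there exists $D_0\in\mathbb{R}^{r\times r}$ with $\inner{z_k,D_0}>0$ for every $k$, or (ii) there exists $\lambda\in\mathbb{R}^c_{\geq 0}\setminus\{0\}$ with $\sum_k\lambda_k z_k=0$, must hold.

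The key step will be ruling out alternative (ii) under the hypotheses. Any linear dependence among the $z_k$'s has support of size strictly greater than $\Krank(Z_{(A,B)})=\min(c,r^2-r)$, so (ii) is impossible when $c\leq r^2-r$ and forces $|\mathrm{supp}(\lambda)|\geq r^2-r+1$ when $c>r^2-r$. In the latter case, every $D\in W_{(A,B)}$ satisfies $\sum_k\lambda_k\inner{z_k,D}=0$ with each summand nonnegative, so $\inner{z_k,D}=0$ for every $k\in\mathrm{supp}(\lambda)$. Since the Kruskal rank hypothesis guarantees that any $r^2-r$ of these $z_k$'s are linearly independent, these equality constraints cut $W_{(A,B)}$ down to a subspace of dimension exactly $r$. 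But $W_{(A,B)}$ always contains the $r$-dimensional subspace of diagonal matrices (a diagonal matrix annihilates every $z_k$ by construction), so the two must coincide; by Proposition~\ref{theorem:infinitesimally_rigid_factorizations} this yields that $(A,B)$ is infinitesimally rigid, contradicting our assumption.

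Therefore alternative (i) holds. The matrix $D_0$ produced must be non-diagonal, since any diagonal matrix annihilates all $z_k$'s and cannot achieve strict positivity. I set $C(t):=I+tD_0$ and verify that $(AC(t),C(t)^{-1}B)$ is a nonnegative factorization of $AB$ for all sufficiently small $t>0$: at each zero entry $A_{ij}=0$ we have $(AC(t))_{ij}=t\inner{a_ie_j^T,D_0}>0$, while at each zero entry $B_{ji}=0$ the expansion $C(t)^{-1}=I-tD_0+O(t^2)$ yields $(C(t)^{-1}B)_{ji}=t\inner{-e_jb_i^T,D_0}+O(t^2)>0$. Since $D_0$ is non-diagonal, $C(t)$ is not a diagonal matrix and the resulting factorization is not obtained from $(A,B)$ by scaling columns and rows, contradicting local rigidity. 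The main obstacle is the Kruskal-rank argument in the second paragraph, where the combinatorial hypothesis on the generators has to be carefully converted into the collapse of the cone $W_{(A,B)}$ onto just the diagonal subspace.
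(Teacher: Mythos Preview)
Your proof is correct and rests on the same two ideas as the paper's: a Gordan-type alternative to detect a direction $D_0$ on which every defining inequality of $W_{(A,B)}$ is strict, and the link between nonnegative linear dependencies among the generators $z_k$ and the lineality space of $W_{(A,B)}^\vee$. The paper argues directly rather than by contrapositive: from local rigidity it shows (essentially by your alternative (i), phrased as ``otherwise take a conic combination of the $D$'s'') that some constraint is tight on all of $W_{(A,B)}$, so $W_{(A,B)}^\vee$ has a nontrivial lineality space of dimension $k$ with $0<k<r^2-r$; it then observes that the lineality space of a finitely generated cone is spanned by a subset of the generators, yielding a dependent subset of size $k+1\leq\min(c,r^2-r)$ and hence the Kruskal-rank bound. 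Your route packages the same content as an explicit Gordan dichotomy and, in the case $c>r^2-r$, replaces the lineality-space argument by the observation that a large-support nonnegative dependence forces $W_{(A,B)}$ down to the diagonal subspace. One small point worth making explicit in your write-up: the conclusion ``$C(t)$ not diagonal $\Rightarrow$ the factorization is not a rescaling'' uses that $A$ has full column rank and $B$ full row rank, which holds here because nonnegative rank equals rank.
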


\begin{proof}
We assume that $(A,B)$ is nonnegative factorization that is locally rigid but not infinitesimally rigid. We will show that $r < \dim W_{(A,B)} < r^2$.  The first inequality follows immediately from the fact that $(A,B)$ is not infinitesimally rigid.  The second inequality follows from the fact that $(A,B)$ is locally rigid by applying either Proposition \ref{prop:Wslice}, or the following argument that does not require the machinery of Section \ref{sec:rigidity_and_boundaries}.

Since $(A,B)$ is not infinitesimally rigid there exists $D \in W_{(A,B)}$ that is not diagonal. If $(\dot{a}_i)_j=(D^T a_i)_{j}$ is strictly positive for all $(i,j)$ such that $(a_i)_j$ is zero and $(\dot{b}_j)_i=(-D b_j)_{i}$ is strictly positive for all $(i,j)$ such that $(b_j)_i$ is zero, then the corresponding motion gives nonnegative factorizations for all $t \in [0,\epsilon)$ for some $\epsilon$ small enough. Hence a necessary condition for a locally rigid nonnegative factorization that is not infinitesimally rigid is that $(\dot{a}_i)_j=(D^T a_i)_{j}=0$ for some $(i,j)$ such that $(a_i)_j = 0$ or $(\dot{b}_j)_i=(-D b_j)_{i}=0$ for some $(i,j)$ such that $(b_j)_i = 0$. Moreover, there exists at least one pair $(i,j)$ such that for all $D \in W_{(A,B)}$ we have $(a_i)_j=(D^T a_i)_{j}=0$ or $(b_j)_i=(-D b_j)_{i}=0$, because otherwise one could take a conic combination of matrices $D$ with $(D^T a_i)_{j}=0$ and $(-D b_j)_{i}=0$  for different $(i,j)$ to get an element of $W_{(A,B)}$ with no $(D^T a_i)_{j}=0$ or $(-D b_j)_{i}=0$. 

Without loss of generality we assume that  $(a_i)_j=(D^T a_i)_{j}=0$ for all $D \in W_{(A,B)}$. Hence $a_i e^T_j$ and $-a_i e^T_j$ both belong to the dual cone $W^\vee_{(A,B)}$.  Since the dual cone has a non-trivial lineality space, $\dim W_{(A,B)} < r^2$.

From the fact that $r < \dim W_{(A,B)} < r^2$, it follows that the dual cone, $W^\vee_{(A,B)}$, has dimension-$k$ lineality space with $0 < k < r^2-r$.  A generating set of $W^\vee_{(A,B)}$ has a subset of size at least $k+1$ that generates the lineality space, and any $k+1$ of those generators are linearly dependent.  Therefore $Z_{(A,B)}$ has $k+1$ columns that are linearly dependent, so $\Krank(Z_{(A,B)}) \leq k < r^2 - r$.  Because $k+1 \leq c$, this also implies $\Krank(Z_{(A,B)}) < c$.
\end{proof}

\begin{corollary} \label{corollary:necessary_condition_for_global_rigidity}
If a nonnegative factorization $(A,B)$ is locally rigid, then $W^V_{(A,B)} \cong \mathbb{R}^{r^2-r}$ or  $\Krank(Z_{(A,B)}) < \min(c,r^2-r)$.
\end{corollary}

Since local rigidity is a necessary condition for global rigidity, the conditions in Corollary~\ref{corollary:necessary_condition_for_global_rigidity} are necessary for the uniqueness of a nonnegative factorization. We will also state Corollary~\ref{corollary:necessary_condition_for_global_rigidity_combinatorial} that is a simplified version of Corollary~\ref{corollary:necessary_condition_for_global_rigidity}. Corollary~\ref{corollary:necessary_condition_for_global_rigidity_combinatorial} directly strengthens  the necessary condition for uniqueness in~\cite[Theorem 3]{laurberg2008theorems} that states that the support of any column of $A$ cannot be contained in the support of any other column of $A$ and the support of any row of $B$ cannot be contained in the support of any other row of $B$.

\begin{corollary} \label{corollary:necessary_condition_for_global_rigidity_combinatorial}
If $(A,B)$ is a globally rigid nonnegative factorization, then the support of any column of $A$ cannot be contained in the support of any other column of $A$, the support of any row of $B$ cannot be contained in the support of any other row of $B$, and the matrices $A$ and $B$ have at least $r^2-r+1$ zeros in total or $\Krank(Z_{(A,B)}) < \min(c,r^2-r)$.
\end{corollary}

Separability based sufficient conditions for uniqueness, e.g. in~\cite{donoho2004does} and~\cite{laurberg2008theorems}, satisfy the additional condition that $A$ and $B$ have at least $r^2-r+1$ zeros in total, because the separability condition quaratees that one of the factors has at least $r^2-r$ zeros and there is at least one additional zero coming from the zero pattern in the other factor. It is unknown which of the two additional conditions is satisfied by sufficiently scattered based sufficient conditions, discussed in~\cite{fu2019nonnegative}. Our methods do not compare directly with methods that guarantee identifiability under further assumptions such as orthogonality of a factor, maximal sparseness, volume minimization or maximization of the polytope associated to one of the factors.

Corollary~\ref{corollary:necessary_condition_for_global_rigidity} together with the necessary condition for uniqueness from~\cite[Theorem 3]{laurberg2008theorems} gives Algorithm~\ref{algorithm:local_rigidity} for determining infinitesimal and local rigidity of a nonnegative matrix factorization.

\begin{algorithm}[h]
\caption{Local rigidity of a size-$r$ nonnegative matrix factorization $(A,B)$}\label{algorithm:local_rigidity}
\begin{algorithmic}[1]
\Procedure{LocalRigidityNMF}{$A,B,r$}
   \If{the support of any column of $A$ (resp. row of $B$) is contained in the support of any other column of $A$ (resp. row of $B$)}
   \State \textbf{return} $(A,B)$ is not locally rigid.
   \Else
   \State Construct the matrix $Z_{(A,B)}$. Let $c$ be the number of columns of $Z_{(A,B)}$.
   \If{the Kruskal-rank of $Z_{(A,B)}$ is equal to $\min(c,r^2-r)$}
       \State construct the polyhedral cone $W^V_{(A,B)}$ spanned by the columns of $Z_{(A,B)}$.
       \If{$W^V_{(A,B)}$ is isomorphic to $\R^{r^2-r}$}
          \State \textbf{return} $(A,B)$ is locally and infinitesimally rigid.
       \Else
          \State \textbf{return} $(A,B)$ is not locally rigid.
       \EndIf    
       \Else
       \State \textbf{return} $(A,B)$ is not infinitesimally rigid; local rigidity cannot be determined. 
   \EndIf
   \EndIf
\EndProcedure
\end{algorithmic}
\end{algorithm}

To test global rigidity of a size-$r$ nonnegative matrix factorization $(A,B)$, one can run a program that searches numerically for size-$r$ nonnegative matrix factorizations of the matrix $AB$. If $(A,B)$ is not globally rigid, then we do not expect the program to output precisely $(A,B)$ up to permutations and scalings. On the contrary, if the program outputs only $(A,B)$ up to permutations and scalings over multiple runs, then this provides evidence towards $(A,B)$ being globally rigid. This approach is further discussed in Appendix~\ref{sec:realizations_of_infinitesimally_rigid_factorizations}.

In the rest of the section, we present a locally rigid factorization which is not infinitesimally rigid. The example we present is a modification of an example by Shitov~\cite{shitov2019nonnegative} that he uses to show that nonnegative rank depends on the field. His example is a matrix of nonnegative rank five, we present a geometric configuration corresponding to a matrix of nonnegative rank four. Checking local rigidity involves  studying  signs of second derivatives in addition to the requirements on zeros and first derivatives.

\begin{example} \label{example:Shitov}
The outer polytope $Q=\text{conv}(\Omega_1,\Omega_2,A_i,B_i,C_i:1 \leq i \leq 3)$ is a modification of a simplex. Let $\varepsilon  = 1/20$. Three vertices of this simplex are replaced by small triangles conv$(A_i,B_i,C_i)$, where
\begin{small}
\begin{gather*}
A_1 = (0, 1/3 + \varepsilon, 1/3 - \varepsilon, 1/3),\quad
B_1 = (0, 1/3, 1/3 + \varepsilon, 1/3 - \varepsilon),\quad
C_1 = (0, 1/3 - \varepsilon, 1/3, 1/3 + \varepsilon),\\
A_2 = (1/3, 0, 1/3 + \varepsilon, 1/3 - \varepsilon),\quad
B_2 = (1/3 - \varepsilon, 0, 1/3, 1/3 + \varepsilon),\quad
C_2 = (1/3 + \varepsilon, 0, 1/3 - \varepsilon, 1/3),\\
A_3 = (1/3 - \varepsilon, 1/3, 0, 1/3 + \varepsilon),\quad
B_3 = (1/3 + \varepsilon, 1/3 - \varepsilon, 0, 1/3),\quad
C_3 = (1/3, 1/3 + \varepsilon, 0, 1/3 - \varepsilon).
\end{gather*}
\end{small} 
The last vertex of the simplex is replaced by a small edge conv$(\Omega_1,\Omega_2)$. The vertices $\Omega_1$ and $\Omega_2$ are points on the line 
$$
\frac{1}{(1 + (0.416827-1)t)}(1/3, 1/3 - 2t, 1/3 + t, 
  0.416827t)
$$
that are sufficiently close to and on the opposite sides of $(1/3,1/3,1/3,0)$. For example, one can take $t$ to be equal to $1/40$ and $-1/40$. Here $0.416827$ is an approximate number and we will explain later how to get the exact value.

The intermediate simplex $\Delta$ is conv$(\Omega,V_1,V_2,V_3)$, where
\begin{gather*}
V_1 = (0,1/3,1/3,1/3),\quad
V_2 = (1/3,0,1/3,1/3),\quad
V_3 = (1/3,1/3,0,1/3),\quad
\Omega = (1/3,1/3,1/3,0).
\end{gather*} 
The vertex $\Omega$ lies on the edge conv$(\Omega_1,\Omega_2)$ of the outer polytope. All other vertices $V_i$ lie on the triangles conv$(A_i,B_i,C_i)$.

The inner polytope $P$ is conv$(W,W_i,F_{ij},H:1 \leq i \leq 3, 1 \leq j \leq 2)$, 
where
\begin{small}
\begin{gather*}
W_1 = (1 - 3\varepsilon )V_1 + \varepsilon V_2 + \varepsilon V_3 + \varepsilon \Omega,\quad
W_2 = \varepsilon V_1 + (1 - 3\varepsilon )V_2 + \varepsilon V_3 + \varepsilon \Omega,\\
W_3 = \varepsilon V_1 + \varepsilon V_2 + (1 - 3\varepsilon )V_3 + \varepsilon \Omega,\quad
W = \varepsilon V_1 + \varepsilon V_2 + \varepsilon V_3 + (1 - 3\varepsilon )\Omega,\\
F_{11} = 0.81V_2 + 0.01V_3 + 0.18\Omega,\quad
F_{12} = 0.14V_2 + 0.20V_3 + 0.66\Omega,\\
F_{21} = 0.43V_1 + 0.22V_3 + 0.35\Omega,\quad
F_{22} = 0.20V_1 + 0.49V_3 + 0.31\Omega,\\
F_{31} = 0.11V_1 + 0.87V_2 + 0.02\Omega,\quad
F_{32} = 0.43V_1 + 0.12V_2 + 0.45\Omega,\\
H = 1/3V_1 + 1/3V_2 + 1/3V_3.
\end{gather*}
\end{small}
It has one vertex close to every vertex of the intermediate simplex: The vertex $W$ is  close to $\Omega$ and the vertices $W_i$ are close to $V_i$. Moreover, there are two vertices on each facet of the intermediate simplex besides the facet that is opposite to $\Omega$: The vertices $F_{ij}$ lie on the facet of the simplex spanned by all vertices but $V_i$. The interior polytope also contains the vertex $H$ that lies on the facet of the intermediate simplex that is opposite to $\Omega$. 

\begin{figure}
\centering
\begin{subfigure}[b]{0.3\textwidth}
\includegraphics[height=3cm]{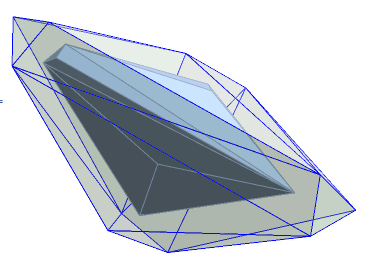}
\caption{$P \subseteq Q$}
\end{subfigure}
\begin{subfigure}[b]{0.3\textwidth}
\includegraphics[height=3cm]{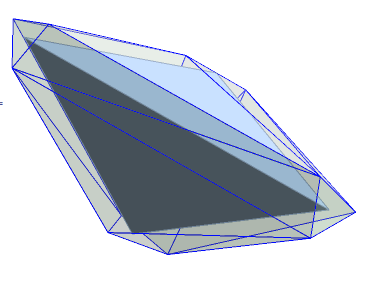}
\caption{$\Delta \subseteq Q$}
\end{subfigure}
\begin{subfigure}[b]{0.3\textwidth}
\includegraphics[height=3cm]{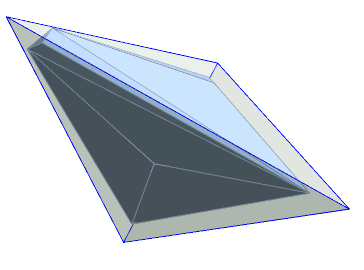}
\caption{$P \subseteq \Delta$}
\end{subfigure}
\caption{The pairwise inclusions of the three polytopes $P \subseteq \Delta \subseteq Q$ in Example~\ref{example:Shitov}.}
\label{figure:Shitov}
\end{figure}

The pairwise inclusions of the three polytopes are depicted in Figure~\ref{figure:Shitov}. The matrix $M$ corresponding to this geometric configuration is obtained by evaluating the facets of the outer polytope $Q$ at the vertices of the inner polytope $P$. The facets of $Q$ can be found for example using \texttt{polymake}~\cite{gawrilow1997polymake}. The matrix $A$ in the nonnegative factorization is obtained by evaluating the facets of $Q$ at the vertices of $B$; the matrix $B$ is obtained by evaluating the facets of $Q$ at the vertices of $P$. The nonnegative factorization has the following zero pattern (after removing rows of $A$ and columns of $B$ that do not contain zeros):

\begin{small}
$$
\begin{pmatrix}
0 & \cdot & \cdot & \cdot\\
\cdot & 0 & \cdot & \cdot\\
\cdot & \cdot & 0 & \cdot\\
\cdot & \cdot & \cdot & 0\\
\cdot & \cdot & \cdot & 0
\end{pmatrix}
\begin{pmatrix}
0 & 0 & \cdot & \cdot & \cdot & \cdot & \cdot \\
\cdot & \cdot & 0 & 0 & \cdot & \cdot & \cdot \\
\cdot & \cdot & \cdot & \cdot & 0 & 0 & \cdot\\
\cdot & \cdot & \cdot & \cdot & \cdot & \cdot & 0
\end{pmatrix}
$$
\end{small}
The number of zeros in this factorization is $12$, so this factorization is not infinitesimally rigid. We will show that it is locally rigid, i.e. that $\Delta$ is the only simplex that can be nested between $P$ and $Q$. The proof is analogous to the proof in~\cite{shitov2019nonnegative}. We present it here so that the reader is able to directly check the correctness of our example.

Since $P$ and $Q$ are constructed such that they are close to $\Delta$, any other simplex $\Delta'$ that can be nested between $P$ and $Q$ must be close to $\Delta$. We will give a parametrization of simplices and show that any simplex $\Delta'$ close to $\Delta$ can be parametrized in such a way.

We restrict to the affine plane in $\mathbb{R}^4$ defined by $x_1+x_2+x_3+x_4=1$. Let $\omega$ be a point on this plane such that $\|\Omega-\omega\|<\varepsilon$. Let $f_{ij} = (F_{ij}-(0,0,0,v_{ij}))/(1-v_{ij})$ where $v_{ij}$ are parameters. Let $H_i(\omega,v)$ be the hyperplane through $f_{i1},f_{i2},\omega$. Define the point $V_i(\omega,v)$ as the intersection of the hyperplanes $x_i=0$ and $H_j(\omega,v)$ for $j \neq i$. Define $\Delta(\omega,v) = \text{conv}(V_1(\omega,v),V_2(\omega,v),V_3(\omega,v),\omega)$. Then $\Delta = \Delta(\Omega,0)$. 

Since $\Delta'$ is close to $\Delta$, the facet of $\Delta'$ opposite to the vertex $V'_i$ intersects the line of $f_{i1}$'s and the line of $f_{i2}$'s. Moreover, the points where the facet of $\Delta'$ intersects these lines correspond to nonnegative $v_{ij}$, because $F_{ij} \in P \subset \Delta'$ correspond to zero parameters and going outwards from $P$ on the line of $f_{ij}$'s increases the value of the parameters $v_{ij}$. Furthermore, since maximal simplices inside $Q$ have vertices on the boundary of $Q$, we can assume that this is the case for $\Delta'$ and hence $\Delta'=\Delta(\omega,v)$ for some $\omega \in Q$ and $v \geq 0$.

Let $\Psi(\omega,v)=\det(V_1(\omega,v),V_2(\omega,v),V_3(\omega,v),H)$. We note that $\Psi(\Omega,0)=0$ and \linebreak $\det(V_1,V_2,V_3,\Omega)>0$. To show that $\Delta$ is the only simplex that can be nested between $P$ and $Q$ it is enough to show that for all other $\Delta(\omega,v)$ close to $\Delta$ with $\omega \in Q$ and $v \geq 0$, we have $H \not \in \Delta(\omega,v)$. This is equivalent to $\Psi(\omega,v)<0$ and $(\Omega,0)$ being a local maximum of $\Psi$ when $\omega \in Q$ and $v \geq 0$. It can be checked that the partial derivatives $\partial \Psi / \partial v_{ij}$ and the directional derivatives in the directions from $(\Omega,0)$ to $(A_i,0),(B_i,0),(C_i,0)$ are negative at $(\Omega,0)$. Finally, on the line 
$$
\frac{1}{(1 + (0.416827-1)t)}(1/3, 1/3 - 2t, 1/3 + t, 
  0.416827t),
$$
we have $\Psi'=0$ and $\Psi''<0$. In fact, the number $0.416827$ is  an approximation of the solution for $x$ in the equation $\frac{\partial \Psi((1/3,1/3-2t,1/3+t,xt),v)}{\partial t}|_{(t=0,v=0)}=0$.

This example is a modification of an infinitesimally rigid example with $13$ zeros where a vertex of the outer polytope is replaced with an edge $\text{conv}(\Omega_1,\Omega_2)$. The corresponding nonnegative factorization would have an extra row in $A$ with zero in the last column. The vertex of the intermediate simplex that for the infinitesimally rigid configuration coincides with the vertex of the outer polytope lies now on the new edge. The only difference between the two examples is that theoretically one can now move the vertex of the intermediate simplex also along the edge $\text{conv}(\Omega_1,\Omega_2)$, but in fact this is not possible, because the local maximum of $\Psi$ on $\text{conv}(\Omega_1,\Omega_2)$ is $\Omega$. By results of Mond, Smith and van Straten~\cite{mond2003stochastic}, it is not possible to construct an analogous example for polygons.
\end{example}

\section{Rigidity and boundaries} \label{sec:rigidity_and_boundaries}
In this section we use $\C M^{m\times n}_{r}$ to denote the set of $m\times n$ matrices with rank and nonnegative rank both equal to exactly $r$.
A matrix of nonnegative rank three is on the boundary of $\C M_{3}^{m\times n}$ if and only if it has a zero entry or all its nonnegative factorizations are infinitesimally rigid. The goal of this section is to study the connection between boundaries of $\C M_{r}^{m\times n}$ and rigidity theory for $r \geq 4$. We already saw in Example~\ref{example:Shitov} that there exist locally rigid nonnegative matrix factorizations that are not infinitesimally rigid. Combining this with results in Section~\ref{sec:geometry_of_factorizations}, one can show that there exists a matrix on the boundary of $\C M_{4}^{m\times n}$ for $m,n$ large enough that has a locally rigid nonnegative factorization but no infinitesimally rigid nonnegative factorizations. Furthermore, in Section~\ref{sec:MSvS_example} we will show that there exist strictly positive matrices on the boundary of $\C M_{r}^{m\times n}$ for $r \geq 4$ that have nonnegative factorizations that are not locally rigid. There exists a neighborhood of a such factorization whose dimension is strictly between $r$ and $r^2$, the minimal and maximal dimensions of spaces of factorizations.

\subsection{Geometry of nonnegative matrix factorizations} \label{sec:geometry_of_factorizations}

As in Section~\ref{sec:boundaries} let $\mu$ be the usual matrix multiplication map, but we now restrict the domain to pairs of matrices with full rank $r$:
 \[ \mu: \B R_r^{m\times r} \times \B R_r^{r\times n} \to \B R_r^{m \times n}. \]
The image of $\mu$ is $\B R_r^{m\times n}$, the set of $m\times n$ matrices with rank $r$.  The positive orthant $(\B R_r^{m\times r})_{\geq 0} \times (\B R_r^{r\times n})_{\geq 0}$ is mapped onto $\C M_{r}^{m \times n}$, the set of rank-$r$ matrices with nonnegative rank $r$.  The {\em trivial boundary} of $\C M_{r}^{m \times n}$ consists of such matrices with at least one zero entry.

Fix a rank-$r$ matrix $M$ with strictly positive entries and a rank factorization $(A,B)$ with $M = AB$.  The set of all rank factorizations of $M$ is the fiber
 \[ \mu^{-1}(M) = \{(AC,C^{-1}B)\mid C \in \B R^{r\times r} \text{ invertible} \}. \]
This set is a real $r^2$-dimensional smooth irreducible variety.  Let
 \[ F := \{(C,C^{-1})\mid C \in \B R^{r\times r} \text{ invertible} \} \subseteq \B R^{r\times r} \times \B R^{r\times r} . \]
$F$ is the graph of the inverse function on $r\times r$ matrices.  The injective linear map
\[ \nu_{(A,B)}: \B R^{r\times r} \times \B R^{r\times r} \to \B R^{m\times r} \times \B R^{r\times n} \]
\[ (C,D) \mapsto (AC, DB) \]
sends $F$ to $\mu^{-1}(M)$.  The image of $\nu_{(A,B)}$ is the subspace of pairs $(\alpha,\beta)$ such that the columns of $\alpha$ are in the columns span of $A$ and the rows of $\beta$ are in the row span of $B$.
 
 \begin{proposition}
 The map $\mu: \B R_r^{m\times r} \times \B R_r^{r\times n} \to \B R_r^{m \times n}$ is a fiber bundle, with fiber $F$.
\end{proposition}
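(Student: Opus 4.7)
The plan is to establish local triviality directly, by combining a smooth local section of $\mu$ with the natural group action on factorizations. Let $GL_r$ denote the group of invertible $r\times r$ real matrices. It acts freely and smoothly on $\B R_r^{m\times r}\times \B R_r^{r\times n}$ by $(A,B)\cdot C = (AC, C^{-1}B)$, and its orbits are precisely the fibers of $\mu$; freeness follows from $A$ having full column rank. Composing the orbit map through any $(A,B)\in \mu^{-1}(M)$ with $C\mapsto (C, C^{-1})$ recovers the identification $\mu^{-1}(M)\cong F$ already given in the excerpt. It therefore suffices to produce a smooth local trivialization $\mu^{-1}(U)\cong U\times F$ over a neighborhood of each point of $\B R_r^{m\times n}$.

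To do so, I would first build a smooth section of $\mu$ near a prescribed $M_0 = A_0 B_0$. Since $A_0$ has rank $r$, choose a row-index set $S\subseteq [m]$ of size $r$ so that the $r\times r$ submatrix $A_S$ formed by the rows of $A_0$ indexed by $S$ is invertible. Write $M_S$ for the analogous row-submatrix of $M$; by continuity there is a neighborhood $U\ni M_0$ on which $M_S$ has rank $r$. Define on $U$
\[ B(M) := A_S^{-1}\, M_S, \qquad A(M) := M\cdot B(M)^T\bigl(B(M)\, B(M)^T\bigr)^{-1}. \]
Both are smooth ($B(M)$ is linear in $M$, and $B(M) B(M)^T$ is invertible because $B(M)$ inherits full row rank from $M_S$), and $A(M)\, B(M) = M$ because the row space of $B(M)$ equals that of $M_S$, which coincides with the row space of $M$. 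Hence $\sigma(M) := (A(M), B(M))$ is a smooth section of $\mu$ over $U$.

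Using $\sigma$ together with the $GL_r$-action, I would define
\[ \Psi : U \times F \longrightarrow \mu^{-1}(U), \qquad (M, (C, C^{-1})) \longmapsto (A(M)\, C,\; C^{-1} B(M)), \]
which is manifestly smooth and commutes with the projection to $U$. Its inverse sends $(A,B)\in \mu^{-1}(U)$ to $(AB, (C, C^{-1}))$ where $C := \bigl(A(AB)^T A(AB)\bigr)^{-1} A(AB)^T A$ is the unique element of $GL_r$ satisfying $A(AB)\, C = A$; this formula is smooth because $A(M)$ has full column rank throughout $U$. The relation $AB = A(M)\, B(M)$ combined with $A = A(M)\, C$ and the injectivity of $A(M)$ then forces $B = C^{-1} B(M)$, so the assignment lands in $U\times F$ and is the required two-sided inverse of $\Psi$.

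The main technical point is the production of the smooth local section $\sigma$; once it is in hand, the principal $GL_r$-orbit structure of each fiber forces the local product decomposition essentially for free. An alternative route would be to observe that the $GL_r$-action is smooth, free, and proper on $\B R_r^{m\times r}\times \B R_r^{r\times n}$, so that the quotient map is a principal $GL_r$-bundle, but the direct construction above is more transparent and yields explicit trivializations.
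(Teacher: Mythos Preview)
Your argument is correct and follows the same overall strategy as the paper: build a local section of $\mu$ and then use the free transitive $GL_r$-action on each fiber to get the product structure $\mu^{-1}(U)\cong U\times F$. The only real difference lies in how the section is produced. The paper selects $r$ linearly independent \emph{columns} of $M$ and normalizes any factorization $(A,B)$ so that the corresponding $r\times r$ submatrix of $B$ equals the identity; this directly yields a slice $K\subseteq \B R_r^{m\times r}\times \B R_r^{r\times n}$ meeting each fiber exactly once, without any pseudo-inverses. Your version instead fixes a row set $S$ of $A_0$, sets $B(M)=A_S^{-1}M_S$, and then recovers $A(M)$ via the Moore--Penrose formula. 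Both work, but the paper's normalization is a bit cleaner: it makes the inverse trivialization immediate (read off the designated submatrix of $B$ to get $C$), whereas you need the least-squares expression for $C$. Your framing in terms of a principal $GL_r$-bundle, and the remark that freeness plus properness would also do the job, is a nice conceptual addition not made explicit in the paper.
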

\begin{proof}
 A matrix $M \in \B R^{m \times n}_r$ has a set of $r$ linearly independent columns.  Given a rank factorization $(A,B)$ of $M$, the same set of columns is linearly independent in $B$.  Call the $r \times r$ submatrix they form $C$.  Then $(AC, C^{-1}B)$ is a rank factorization of $M$ with $C^{-1}B$ having the $r \times r$ submatrix in these columns equal to the identity, and this is the unique factorization of $M$ with that property.  Let $K$ be the subset of $\B R^{m\times r}_r \times \B R^{r\times n}_r$ of pairs $(\alpha, \beta)$ in which $\beta$ has this particular submatrix equal to the identity.
 
 All matrices in $\B R^{m \times n}_r$ have a unique factorization in $K$ unless there is linear dependence among the chosen columns.  Such exceptions form a lower dimensional subset, so in particular $M$ has a neighborhood $X$ of matrices with factorizations in $K$.  Then $\mu^{-1}(X)$ has product structure $(\mu^{-1}(X)\cap K) \times F$ by map
  \[ ((\alpha, \beta), (\gamma,\gamma^{-1})) \mapsto (\alpha \gamma, \gamma^{-1} \beta) \]
 which can be checked is continuous with continuous inverse.  This proves the fiber bundle structure of $\mu$.
\end{proof}

A factorization $(AC,C^{-1}B)$ of $M$ is a nonnegative factorization of $M$ if $AC \in (\B R_r^{m\times r})_{\geq 0}$ and $C^{-1}B \in (\B R_r^{r\times n})_{\geq 0}$.  Let $c_1,\ldots,c_r$ denote the columns of $C$ and $c'_1,\ldots,c'_r$ the rows of $C^{-1}$.  The inequality $Ac_i \geq 0$ gives $m$ linear inequalities on $c_i$ and defines a polyhedral cone in $\B R^r$ with at most $m$ facets which we will denote $P_A$.  Similarly $c'_iB \geq 0$ defines a polyhedral cone $P_{B^T}$ in $(\B R^r)^*$ with at most $n$ facets.  The nonnegative factorizations of $M$ then correspond to the set $F \cap (P_A^{\times r} \times P_{B^T}^{\times r})$.  Let $U_{(A,B)} := (P_A^{\times r} \times P_{B^T}^{\times r})$, which is itself a polyhedral cone.

Fixing $M$ and a rank factorization $(A,B)$, the injective linear map $\nu_{(A,B)}$ that sends $F$ to $\mu^{-1}(M)$ also maps cone $U_{(A,B)}$ to $(\B R_r^{m\times r})_{\geq 0} \times (\B R_r^{r\times n})_{\geq 0} \cap \im(\nu_{(A,B)})$.  The boundary of $U_{(A,B)}$ maps to pairs of matrices that have at least one zero entry.  Because $M$ is assumed to have positive entries, $\im(\nu_{(A,B)})$ is not contained in a coordinate hyperplane of $\B R_r^{m\times r}) \times \B R_r^{r\times n})$.  Therefore the interior of $U_{(A,B)}$ maps to pairs of matrices with positive entries.  Sometimes it will be convenient to work in one or the other system of coordinates.

\begin{remark}
$P_A$ is the outer cone, $Q$, and $P_{B^T}$ is dual to the inner cone, $P$, in the second geometric characterization in Section~\ref{sec:geometric_characterization}.
\end{remark}

If $(A,B)$ is a nonnegative factorization of $M$ then $(AD,D^{-1}B)$ is as well for any diagonal matrix $D$ with positive diagonal entries.  We will generally be interested only in factorizations modulo this scaling.  

Now we have introduced the tools for proving Lemma~\ref{lemma:locally_rigid_remove_nonzero}.

\begin{proof}[Proof of Lemma~\ref{lemma:locally_rigid_remove_nonzero}]
Let $(A,B)$ be a locally rigid factorization. Let $(A',B')$ be a factorization that is obtained from $(A,B)$ by erasing all rows of $A$ and columns of $B$ that do not contain any zero entries.

For the sake of contradiction, assume that $(A',B')$ is not locally rigid. 
Equivalently every neighborhood of $(I,I)$ in $F\cap U_{(A',B')}$ contains a pair $(C,C^{-1})$ where $C$ is not diagonal. This implies that there is a row $a_i$ of $A$ with positive entries and a column $c_j$ of $C$ such that $a_i c_j <0$  or there is a column $b_i$ of $B$ with positive entries and a row $c'_j$ of $C^{-1}$ such that $c'_j b_i <0$. Let $c_{\max}$ be the maximal entry of $A$ and $B$; let $c_{\min}$ be the minimal non-zero entry of $A$ and $B$. Consider the $\varepsilon$-neighborhood of $(I,I)$ where $\varepsilon=\frac{c_{\min}}{c_{\min}+(r-1)c_{\max}}$. For any $(C,C^{-1})$ in this neighborhood, every non-diagonal entry of $C$ is greater than $-\varepsilon$ and every diagonal entry is greater than $1-\varepsilon$. Since $A$ and $B$ are nonnegative, we have $a_i c_j \geq -(r-1)\varepsilon c_{\max} + (1-\varepsilon) c_{min}=0$  and similarly $c'_j b_i \geq 0$ for all $i,j$.
\end{proof}

\begin{proposition}\label{prop:int}
Positive $M \in \C M_{r}^{m \times n}$ lies on boundary of $\C M_{r}^{m \times n}$ if and only if every nonnegative factorization $(A,B)$ of $M$ has at least one zero entry.
\end{proposition}
\begin{proof}
 Suppose $M$ has a strictly positive rank factorization $(A,B)$.  Then $(A,B)$ has a relatively open neighborhood $W$ contained in $\mu^{-1}(M) \cap (\B R_r^{m\times r})_{> 0} \times (\B R_r^{r\times n})_{> 0}$. Since $\mu$ is a fiber bundle, it is an open mapping.  Therefore $\mu(W)$ is an open neighborhood of $M$ in $\C M_{r}^{m \times n}$, so $M$ is in the interior.
 
 Suppose $M$ does not have any strictly positive rank factorizations.  Equivalently $F$ does not intersect the interior of $U_{(A,B)}$.  We will construct a rank-$r$ matrix $M'$ arbitrarily close to $M$ with $\nrank(M') > r$.  For cone $P_A \subseteq \B R^r$, let $P_A^\vee \subseteq (\B R^r)^*$ denote the dual cone, which consists of all linear functionals that are nonnegative on $P_A$, and similarly let $P_{B^T}^\vee$ be the dual cone of $P_{B^T}$.  Neither the cone $P_A$ nor $P_{B^T}$ contains a line since after a change of coordinates each are a subspace intersected with a positive orthant.  Therefore we can choose functionals $x$ and $y$ in the interiors of $P_A^\vee$ and $P_{B^T}^\vee$ respectively.  The functional $x$ has the property that for any non-zero $v \in P_A$, $xv > 0$, and similarly for $y$ with respect to $P_{B^T}$.
 
 Let $X$ be the $m \times r$ matrix with $x$ in every row, and $Y$ the $r\times n$ matrix with $y$ in every column.  Choose vectors $v$ and $w$ in the interiors of $P_A$ and $P_{B^T}$ respectively.  Let $A' = A - \epsilon X$ and $B' = B - \epsilon Y$ for $\epsilon > 0$ chosen small enough so that $v$ and $w$ are still in the interiors of $P_{A'}$ and $P_{(B')^T}$ respectively.  Then $U_{(A',B')}$ contains the point given by $r$ copies of $v$ and $r$ copies of $w$ that is in $U_{(A,B)}$.
 
 Let $(C,D)$ be any non-zero point on the boundary of $U_{(A,B)}$, so either $a_ic_j = 0$ for some row $a_i$ of $A$ and column $c_j$ of $C$ or $d_ib_j = 0$ for some row $d_i$ of $D$ and column $b_j$ of $B$.  Without loss of generality assume the first case.  Letting $a'_i$ denote the $i$th row of $A'$ we have $a'_ic_j = a_ic_j - \epsilon xc_j < 0$ because $c_j \in P_A$.  This implies $(C,D)$ is outside of the cone $U_{(A',B')}$.  Since $U_{(A',B')}\setminus\{0\}$ intersects the interior of $U_{(A,B)}$ but not its boundary, it must be contained in the interior of $U_{(A,B)}$.  Since $F$ does not intersect the interior of $U_{(A,B)}$ or the origin, it does not intersect $U_{(A',B')}$.  Therefore $M' = A'B'$ has $\nrank(M') > r$.
 
 Note that $M' = M - \epsilon(XB + AY) + \epsilon^2(XY)$, which can be made arbitrarily close to $M$ in 2-norm by choosing $\epsilon$ small enough.  For sufficiently small $\epsilon$, $A'$ and $B'$ have full rank since this is an open condition, so $\rank(M') = r$.
\end{proof}

\begin{proposition}\label{prop:interior}
 Positive $M$ has a strictly positive rank factorization if and only if the set of nonnegative rank factorizations of $M$ contains a nonempty subset that is open in the Euclidean subspace topology on $\mu^{-1}(M)$ (or equivalently the Zariski closure of $\mu^{-1}(M) \cap (\B R_r^{m\times r})_{\geq 0} \times (\B R_r^{r\times n})_{\geq 0}$ is $\mu^{-1}(M)$).
\end{proposition}
\begin{proof}
 First we show that the set $F$ is not contained in any facet hyperplane of $U_{(A,B)}$.  Every facet $H$ of $U_{(A,B)}$ is defined by a linear equation involving either only the first set of coordinates or only the second set.  Consider the former case without loss of generality.  Recall that $F$ is the graph of the inverse function on $r\times r$ matrices, so the first set of coordinates are algebraically independent in $F$.  Therefore $H\cap F$ has strictly lower dimension than $F$.
 
 Suppose an open neighborhood of $F$ is contained in $U_{(A,B)}$.  If the neighborhood is contained in the boundary of $U_{(A,B)}$ then $F$ is contained in the hyperplane of one of the facets since $F$ is irreducible.  As shown above, this cannot happen so there must be a point on $F$ in the interior of $U_{(A,B)}$.  Conversely, if $F \cap \interior(U_{(A,B)})$ is non-empty, it is open in the subspace topology on $F$ since $\interior(U_{(A,B)})$ is open.
\end{proof}

Suppose $\nrank(M) = r$, and that $(A,B)$ is a nonnegative factorization.
The point $(I,I) \in F$ has $\nu_{(A,B)}(I,I) = (A,B)$.  To understand the possible boundary components of sets of matrices with rank and nonnegative rank equal to $r$, it is sufficient to understand the ways that $F$ and $U_{(A,B)}$ can intersect in a neighborhood of $(I,I)$.  It is not true that if $F$ and $\interior(U_{(A,B)})$ are disjoint in a neighborhood of $(I,I)$, then $M$ is on the boundary of $\C M_{r}^{m \times n}$; they may intersect elsewhere.  However,
the following corollary to Lemma~\ref{lem:extend}  demonstrates we can always construct $M' = A'B'$ that has $M$ as a submatrix, is on the boundary, and for which $U_{(A',B')}$ agrees with $U_{(A,B)}$ in a neighborhood of $(I,I)$.

\begin{corollary}
Suppose positive matrix $M$ has a nonnegative factorization $(A,B)$ such that all nonnegative factorizations of $M$ in a neighborhood of $(A,B)$ have at least one zero entry.  Then there is a matrix $A' \in \B R_r^{m'\times r}$ obtained by adding at most $r$ strictly positive rows $A$ and a matrix $B' \in \B R_r^{r\times n'}$ obtained by adding at most $r$ strictly positive columns to $B$, such that $M' = A'B'$ is on the non-trivial boundary of $\C M_{r}^{m' \times n'}$.
\end{corollary}

We now consider the tangent space of $F$ at $(I,I)$, and how it intersects $U_{(A,B)}$. 
The tangent space of $F$ at $(I,I)$ is
 \[ T_{(I,I)} F = \{(D,-D)\mid D \in \B R^{r\times r}\}. \]
The cone $W_{(A,B)}$ from Section~\ref{sec:rigid} is the projection to the first $\B R^{r^2}$ factor of tangent directions $(D,-D)$ such that the line $(I+tD,I-tD)$ stays in $U_{(A,B)}$ for $t \in [0,\epsilon)$ for some $\epsilon > 0$. 
The tangent directions along the diagonal matrices $D$ always lie in $W_{(A,B)}$. We recall that a nonnegative factorization $(A,B)$ is infinitesimally rigid if $W_{(A,B)}$ consists only of the diagonal matrices, and it is  locally rigid if a neighborhood of $(I,I)$ in $F \cap U_{(A,B)}$ has dimension~$r$, the minimal possible dimension.

\begin{proposition}\label{prop:Wslice}
 If $W_{(A,B)}$ has full dimension $r^2$, then $M$ is in the interior of $\C M_{r}^{m \times n}$.
\end{proposition}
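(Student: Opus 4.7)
The plan is to use Proposition~\ref{prop:int}, which reduces the statement to showing that $F$ meets $\interior(U_{(A,B)})$. The strategy is to produce an explicit curve on $F$ through $(I,I)$ that enters $\interior(U_{(A,B)})$ for small positive parameter.

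First, since $W_{(A,B)}$ has dimension $r^2$, it contains a matrix $D^*$ in its relative interior. By the description of $W_{(A,B)}$ in terms of the active facet inequalities of $U_{(A,B)}$ at $(I,I)$ (given by zeros of $A$ and $B$), each such inequality $\ell(X,Y) \geq 0$ satisfies $\ell(D^*,-D^*) > 0$; the non-active inequalities are automatically strict at $(I,I)$. Evaluating along the line $(I+tD^*,I-tD^*)$, for an active inequality we have
\[ \ell(I+tD^*, I-tD^*) = \ell(I,I) + t\,\ell(D^*,-D^*) = t\,\ell(D^*,-D^*) > 0 \]
for $t > 0$, and by continuity the non-active inequalities remain strict for small $t$. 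Hence this line lies in $\interior(U_{(A,B)})$ for $t \in (0,\epsilon_0)$ with some $\epsilon_0 > 0$.

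Next I would lift this line to a curve on $F$. Because the matrix $I+tD^*$ is invertible for small $t$, define
\[ \gamma(t) = \bigl(I+tD^*,\,(I+tD^*)^{-1}\bigr) \in F. \]
The Neumann series gives $(I+tD^*)^{-1} = I - tD^* + O(t^2)$, so $\gamma(t)$ differs from $(I+tD^*,I-tD^*)$ only in the second component and only by $O(t^2)$. Each facet-defining linear functional $\ell$ of $U_{(A,B)}$ therefore satisfies, for active $\ell$,
\[ \ell(\gamma(t)) = t\,\ell(D^*,-D^*) + O(t^2), \]
with strictly positive leading coefficient, and for non-active $\ell$ it stays strictly positive by continuity. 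Since there are only finitely many facet inequalities, there exists $\epsilon_1 \in (0,\epsilon_0]$ such that $\gamma(t) \in \interior(U_{(A,B)})$ for all $t \in (0,\epsilon_1)$.

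This exhibits a point of $F \cap \interior(U_{(A,B)})$, so by Proposition~\ref{prop:int} we conclude $M \in \interior(\C M^{m\times n}_{r,r})$. The only delicate step is the comparison between the linear-in-$t$ margin provided by $D^*$ being in the relative interior and the quadratic correction coming from the curvature of $F$; the argument above resolves this simply because active inequalities are finite in number and each gains margin at linear rate. No additional machinery beyond the explicit parametrization of $F$ as the graph of matrix inversion is needed.
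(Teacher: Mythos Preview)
Your proof is correct and follows essentially the same approach as the paper: both show that $F$ meets $\interior(U_{(A,B)})$ and then invoke Proposition~\ref{prop:int}. The paper's argument is terser---it observes that full-dimensionality of $W_{(A,B)}$ forces the tangent space $T_{(I,I)}F$ to meet $\interior(U_{(A,B)})$ near $(I,I)$ and then asserts that $F$ itself does as well---whereas you make this last implication explicit via the curve $\gamma(t)=(I+tD^*,(I+tD^*)^{-1})$ and the linear-versus-quadratic comparison, which is exactly the content hidden behind the paper's one-line assertion.
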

\begin{proof}
 As in the proof of Proposition \ref{prop:interior}, if $W_{(A,B)}$ has full dimension, then the tangent space $T_{(I,I)} F$ intersects the interior of $U_{(A,B)}$ in a neighborhood of $(I,I)$.  This implies that $F$ itself intersects the interior of $U_{(A,B)}$.  By Propositon \ref{prop:int} and Proposition \ref{prop:interior}, $M$ is in the interior of $\C M_{r}^{m \times n}$.
\end{proof}

In Example~\ref{example:Shitov}, a neighborhood of $(I,I)$ in $F \cap U_{(A,B)}$  has dimension $r$, but $\dim W_{(A,B)}>r$. In general, if $r < \dim W_{(A,B)} < r^2$ then this value may differ from the dimension of a neighborhood of $(I,I)$ in $F \cap U_{(A,B)}$ in either direction.

\begin{example}\label{ex:triangles}
 Consider the following rank 3 matrix with nonnegative rank 3 factorization
 \[ M = \begin{pmatrix}
         2&1&1\\
         1&2&1\\
         1&1&2
        \end{pmatrix}
 = \begin{pmatrix}
          0&1&1\\
         1& 0&1\\
         1&1& 0
        \end{pmatrix}
    \begin{pmatrix}
          0&1&1\\
         1& 0&1\\
         1&1& 0
        \end{pmatrix}. \]
 Here $W_{(A,B)}^\vee$ is the conic combination of the 6 vectors
 \[ \begin{pmatrix}0&0&0\\1&0&0\\1&0&0\end{pmatrix},
    \begin{pmatrix}0&1&0\\0&0&0\\0&1&0\end{pmatrix},
    \begin{pmatrix}0&0&1\\0&0&1\\0&0&0\end{pmatrix},
    \begin{pmatrix}0&-1&-1\\0&0&0\\0&0&0\end{pmatrix},
    \begin{pmatrix}0&0&0\\-1&0&-1\\0&0&0\end{pmatrix},
    \begin{pmatrix}0&0&0\\0&0&0\\-1&-1&0\end{pmatrix}\]
 corresponding to the 6 zeros in $A$ and $B$.
 This forms a 5 dimensional subspace of $\B R^9$ and $W_{(A,B)}$ is the orthogonal complement which is a space of dimension 4 (the 3 trivial diagonal directions plus 1),
  \[ W_{(A,B)} = \left\{ \begin{pmatrix}d_1&-t&t\\t&d_2&-t\\-t&t&d_3\end{pmatrix} \; \Bigg|\; t,d_1,d_2,d_3 \in \B R\right\}. \]
However any neighborhood of $(I,I)$ in $F \cap U_{(A,B)}$ has full dimension 9.  In fact $M$ is not on the algebraic boundary of $\C M_{3}^{3\times 3}$.  The geometry of the nested polytopes of this example are shown in Figure \ref{fig:triangles}.
\end{example}
\begin{figure}
\centering
 \includegraphics[width=0.7\textwidth]{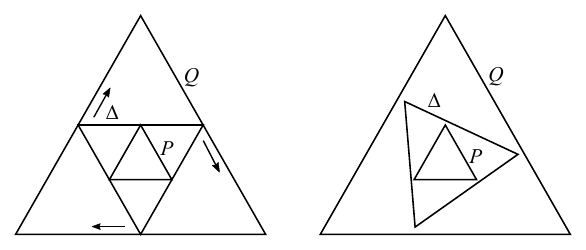}
 \caption{On the left are the nested polytopes $P \subseteq \Delta \subseteq Q$ corresponding to factorization $(A,B)$ of $M$ from Example \ref{ex:triangles}.  The arrows indicate a tangent direction in $W_{(A,B)}$.  On the right is a nearby factorization $(AC,C^{-1}B)$ with $(C,C^{-1})$ in the interior of $U_{(A,B)}$.}
 \label{fig:triangles}
\end{figure}

Suppose that factorization $(A,B)$ is not locally rigid, so $F \cap U_{(A,B)}$ has dimension larger than $r$ in a neighborhood of $(I,I)$.  If we suppose also that $(A,B)$ is a boundary factorization, then locally $F \cap U_{(A,B)}$ cannot exceed the cone $W_{(A,B)}$, which represents the local intersection of the tangent space $T_{(I,I)}$ and $U_{(A,B)}$ (in contrast to Example \ref{ex:triangles}).  Within this situation, there are two broad cases to consider: either $F \cap U_{(A,B)}$ is equal to $W_{(A,B)}$ in a neighborhood of $(I,I)$, or it is strictly contained in $W_{(A,B)}$. We will study the first case in Section \ref{sec:MSvS_example}. An example of the second case is the locally rigid nonnegative factorization that is not infinitesimally rigid in Example~\ref{example:Shitov}.

\subsection{Partially infinitesimally rigid factorizations} \label{sec:MSvS_example}
Here we present a construction to produce matrices of rank $r > 3$ that are on the non-trivial boundary of nonnegative rank $r$, and have a positive dimensional set of nonnegative factorizations.
\begin{definition}
 A nonnegative factorization $(A,B)$ is {\em partially infinitesimally rigid} if $W_{(A,B)}$ is equal to $F \cap U_{(A,B)}$ in a neighborhood of $(I,I)$, and $\dim W_{(A,B)} < r^2$.
\end{definition}
Partially infinitesimally rigid factorizations generalize infinitesimally rigid factorizations. When $\dim W_{(A,B)}$ exceeds $r$, the factorization $(A,B)$ is not rigid.  In the examples we have encountered, the nonnegative factorizations in a neighborhood of $(A,B)$ have some columns of $A$ fixed, while others have freedom.

For $F$ to contain the cone $W_{(A,B)}$, it must contain its affine hull, so we first examine the question: what affine linear spaces passing through $(I,I)$ are contained in $F$?  A line through $(I,I)$ has the form
 \[ (I + tD, I + tE). \]
To be contained in $F$, it must be that $(I+tD)(I+tE) = I$.  This holds exactly when $E = -D$ and $D^2 = 0$.  Therefore an affine linear space in $F$ through $(I,I)$ has the form
 \[ \{(I + D, I-D) \mid D \in V \} \]
where $V$ is some linear space of $r\times r$ matrices $D$ satisfying $D^2 = 0$.

One way to produce such a space $V$ is to choose a subspace $S \subseteq \B R^r$ and define
 \[ V_S = \{D \in \B R^{r\times r} \mid \im D \subseteq S \subseteq \ker D \}. \]
However not all spaces $V$ have this form, as the following example shows.  We do not know a full characterization of such spaces $V$.

\begin{example}
 Let $V$ be the space
\[ V = \left\{\begin{pmatrix} 0&s&t&0 \\
          0&0&0&t \\
          0&0&0&-s \\
          0&0&0&0
         \end{pmatrix} \; \Bigg|\; s,t \in \B R \right\}. \]
Each matrix $D \in V$ has $\im D = \ker D = \inner{e_1, te_2 - se_3}$, so there is no uniform space $S \subseteq \B R^4$ such that $\im D \subseteq S \subseteq \ker D$ for all $D \in V$.
\end{example}

We focus on the case of a space $V_S$ with $S$ a coordinate subspace of $\B R^r$ because we have a simple procedure to create factorizations $(A,B)$ for which $W_{(A,B)}$ has this form.

\begin{proposition}\label{prop:pir}
 Let $(A,B)$ be an infinitesimally rigid nonnegative rank-$r$ factorization.  There is a partially infinitesimally rigid nonnegative rank $r+1$ factorization $(A',B')$ where $A'$ is a $n \times (r+1)$ matrix obtained from $A$ by adding a positive column and $B'$ is a $(r+1) \times (m+1)$ matrix obtained from $B$ by adding a row of zeros and then a positive column.
\end{proposition}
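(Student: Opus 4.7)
The plan is to construct $(A',B')$ explicitly and analyze the tangent cone $W_{(A',B')}$. Choose $c \in \mathbb{R}^m_{>0}$ strictly positive and outside the column span of $A$ (possible since infinitesimal rigidity of $(A,B)$ forces $m > r$), strictly positive $d \in \mathbb{R}^r_{>0}$, and $e > 0$, and set
\[
A' = \begin{pmatrix} A & c \end{pmatrix}, \qquad B' = \begin{pmatrix} B & d \\ 0 & e \end{pmatrix},
\]
so that $(A', B')$ is a nonnegative rank-$(r+1)$ factorization. Writing a tangent matrix as the block $D' = \begin{pmatrix} D & u \\ v^T & \delta \end{pmatrix}$, a direct block calculation shows $D' \in W_{(A', B')}$ precisely when (a) $(AD)_{ij} + c_i v_j \geq 0$ at every zero $(i,j)$ of $A$, (b) $(DB)_{ij} \leq 0$ at every zero $(i,j)$ of $B$, and (c) $v^T b_j \leq 0$ for every $j \in [n]$; the entries $u$ and $\delta$ are unconstrained because the added column of $A'$ and the column $(d^T, e)^T$ of $B'$ are strictly positive.

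Identify the lineality of $W_{(A', B')}$ by imposing equality in each inequality. Equality in (c) combined with $b_j \geq 0$ and $\rank B = r$ forces $v = 0$, at which point (a), (b) become the defining equalities of $W_{(A,B)}$, which by infinitesimal rigidity of $(A,B)$ are solved only by diagonal matrices. So the lineality is the $(2r+1)$-dimensional subspace $L = \{D' : D \text{ diagonal},\, v = 0,\, u \in \mathbb{R}^r,\, \delta \in \mathbb{R}\}$, which decomposes as the diagonal trivial directions together with $V_S$ for $S = \mathrm{span}(e_1, \ldots, e_r)$.

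The key step is to argue $W_{(A', B')} = L$ by a suitable choice of $c$. Infinitesimal rigidity of $(A, B)$ provides a positive relation $\sum \lambda^0_{ij}\, a_i e_j^T = \sum \mu^0_{kl}\, e_k b_l^T$ among the generators of $W^\vee_{(A, B)}$ with all coefficients strictly positive; since every column of $A$ contains a zero (Corollary~\ref{lemma:number_of_zeros_in_a_row}), the image of the map $c \mapsto w(c) = (\sum_i \lambda^0_{ij} c_i)_{j \in [r]}$ over $c \in \mathbb{R}^m_{>0}$ meets the relative interior of $\cone(b_1, \ldots, b_n)$. Choosing $c$ so that $w(c)$ lies in this interior gives $\nu^0 > 0$ with $\sum_j \nu^0_j b_j = w(c)$, and $(\lambda^0, \mu^0, \nu^0)$ is then a strictly positive relation among the generators of $W^\vee_{(A', B')}$. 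Hence $W^\vee_{(A', B')}$ coincides with its linear span (the $r^2$-dimensional subspace of matrices with zero last column and zero diagonal in the top-left block), and taking duals yields $W_{(A', B')} = L$, of dimension $2r+1 < (r+1)^2$.

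It remains to realize every $D' \in L$ by a curve in $F \cap U_{(A', B')}$. Decompose $D' = D_d + U$ with $D_d$ diagonal and $U$ supported in the top-right $(r\times 1)$ block (so $U^2 = 0$), and set $C(t) = \exp(tD')$. Block upper-triangularity yields closed-form expressions for $C(t)$ and $C(t)^{-1}$, and a direct computation using the strict positivity of $c$ and $d$ confirms that $A' C(t)$ and $C(t)^{-1} B'$ remain nonnegative for small $t \geq 0$. This shows $L = W_{(A', B')}$ agrees with $F \cap U_{(A', B')}$ in a neighborhood of $(I, I)$, so $(A', B')$ is partially infinitesimally rigid. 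The main technical obstacle is the genericity choice in the third step: ensuring that $w(c)$ can be made to land in the relative interior of $\cone(b_j)$, which relies on the support of the positive kernel relation of $W^\vee_{(A, B)}$ covering all columns of $A$, a property guaranteed by infinitesimal rigidity together with the zero-in-each-column property.
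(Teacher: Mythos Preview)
Your argument follows the paper's approach closely: determine the generators of $W^\vee_{(A',B')}$ from the zero pattern, choose the new positive column $c$ so that a strictly positive relation exists among all generators (making $W^\vee_{(A',B')}$ a linear space and hence $W_{(A',B')}=L$), and then realize $L$ inside $F\cap U_{(A',B')}$ using that elements of $V_S$ are nilpotent---your exponential curves coincide with the paper's linear embedding $D\mapsto (I+D,I-D)$ on $V_S$ since $D^2=0$ there. One caveat: infinitesimal rigidity does not in general force $m>r$ (for $r=3$, take $A$ a $3\times 3$ permutation matrix and $B$ the $3\times 3$ matrix with zeros on the diagonal and ones off it; one checks directly that $W^\vee_{(A,B)}=\R^6$), so the availability of a strictly positive $c\notin\operatorname{colspan}(A)$ is an additional hypothesis rather than a consequence---the paper's proof leaves this same point implicit.
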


\begin{proof}
 Let $S$ be $\inner{e_{1},\ldots,e_{r}}$.  Then $V_S$ consists of matrices that are supported only in the first $r$ entries of the last column.  We will construct the positive column added to $A$ such that
  \[ W_{(A',B')} = \inner{e_1 e_1^T,\ldots,e_{r+1} e_{r+1}^T} + V_S, \]
 This is equivalent to showing that $W_{(A',B')}^\vee$ is equal to the space of $(r+1)\times (r+1)$ matrices supported on the off-diagonal entries of the first $r$ columns.
 
 The positive column added to $B'$ is only to bring $B'$ up to full rank, $r+1$.  It does not contribute to $W_{(A',B')}^\vee$ and will not arise again in the proof.
 
 First we show that the linear span of $W_{(A',B')}^\vee$ is equal this space of matrices.  We characterize the generating set of $W_{(A',B')}^\vee$ coming from the zeros of $A'$ and $B'$.
 The natural embedding of each generator of $W_{(A,B)}^\vee$ of the form $-e_ib_j^T$ is a generator of $W_{(A',B')}^\vee$ since the columns of $B'$ are the columns of $B$ with a zero entry added to the end.  Each generator of the form $a_j^Te_i^T$ corresponds to $a_j^Te_i^T + a_{j,r+1}e_{r+1}e_i^T$ in $W_{(A',B')}^\vee$.  In addition, $W_{(A',B')}^\vee$ has generator $-e_{r+1}b_j^T$ for each $j = 1,\ldots,m$ coming from the new zero row added to $B'$.  It follows that $W_{(A',B')}^\vee$ is contained in the space claimed.  The generators of the form $-e_{r+1}b_j^T$ span $V_S^T$ since $B$ has full rank $r$.  Under the natural projection $\B R^{(r+1)\times (r+1)} \to \B R^{r\times r}$, the generating set of $W_{(A',B')}^\vee$ maps to the generating set of $W_{(A,B)}^\vee$ and zero, which span the $r\times r$ matrices with zero diagonal.  Therefore $W_{(A',B')}^\vee$ spans the matrices supported on the off-diagonal entries of the first $r$ columns.
 
 To prove that $W_{(A',B')}^\vee$ is a linear space, we show that zero is a strictly positive combination of the generators, and therefore zero is in the relative interior.  Since $W_{(A,B)}^\vee$ is a linear space, zero is a positive combination of its generators,
 \[ 0 = \sum_{j=1}^n \sum_{i \in S_j} c_{i,j}a_j^Te_i^T - \sum_{j=1}^m \sum_{i \in T_j} d_{i,j}e_ib_j^T \]
 where $S_j$ is the set of zeros in $a_j$ and $T_j$ the set of zeros in $b_j$.
 Let $v$ denote the same positive combination of the corresponding generators of $W_{(A',B')}^\vee$,
  \[ v = \sum_{j=1}^n \sum_{i \in S_j} c_{i,j}(a_j^Te_i^T + a_{j,r+1}e_{r+1}e_i^T) - \sum_{j=1}^m \sum_{i \in T_j} d_{i,j}e_ib_j^T \]
  \[= \sum_{j=1}^n \sum_{i \in S_j} c_{i,j}a_{j,r+1}e_{r+1}e_i^T. \]
 The matrix $v$ is strictly positive on the first $r$ entries of the last row and zero elsewhere, and its positive entries depend on the new positive entries chosen for $A'$.  The convex cone
  $\cone(b_1,\ldots,b_m) \subseteq \B R^r$
 is full dimensional and contained in the positive orthant.
 Choose a vector $w$ in the interior of the cone, so it can be expressed as a strictly positive combination of the columns of $B$.
 We choose the entries $a_{1,r+1},\ldots,a_{n,r+1}$ so that
  \[ \sum_{j=1}^n \sum_{i \in S_j} c_{i,j}a_{j,r+1}e_i = w. \]
 Then $-e_{r+1}w^T$ is a positive combination of the generators of $W_{(A',B')}^\vee$ of the form $-e_{r+1}b_j^T$ and
  \[ v -e_{r+1}w^T = 0. \]
 Thus zero is a positive combination of all the generators.
 
 Finally, to conclude that $(A',B')$ is partially infinitesimally rigid, we show that $W_{(A',B')}$ is contained in $F$.  After modding out by the diagonal scaling directions, $W_{(A',B')}$ is equal to $V_S$, so its elements square to zero.
\end{proof}

An non-trivial algebraic boundary component of $\C M^{m \times n}_{r}$ consisting of matrices with infinitesimally rigid factorizations $(A,B)$ is defined by $r^2-r+1$ zero conditions on $(A,B)$.  The above construction gives a recipe to produce non-trivial algebraic boundary components consisting of matrices with partially infinitesimally rigid decompositions $(A,B)$ that is also defined by zero conditions on $(A,B)$.  However, the number of zero conditions is generally fewer.  On the other hand, each matrix has a higher dimensional space of nonnegative factorizations.

The following example demonstrates a matrix and its partially infinitesimally rigid factorization on the algebraic boundary of $\C M^{m \times n}_{4}$.  While infinitesimally rigid factorizations for rank 4 have at least 13 zeros, this example has only 10.  The space of nonnegative factorizations in its neighborhood after modding out by diagonal scaling is 3 rather than zero.

\begin{example}\label{ex:pir}
 Let $M \in \C M^{4 \times 3}_{3}$ be the matrix with nonnegative factorization
  \[ A = \begin{pmatrix} 0&1&2\\ 1& 0&2\\ 2&1& 0 \\ 1&2& 0 \end{pmatrix},
  \quad
  B = \begin{pmatrix} 0&1&1\\ 1& 0&1\\ 1&1& 0 \end{pmatrix}.\]
 It can be checked that $W_{(A,B)}$ consists only of the diagonal, so this factorization is infinitesimally rigid.  While $M$ is not on the boundary because it has only 3 columns, it could be expanded into a boundary instance by adding postive rows and columns per Lemma \ref{lem:extend}.
 
 We apply the construction of Proposition \ref{prop:pir} to get $M' = A'B'$ with
  \[ A' = \begin{pmatrix} 0&1&2&1\\ 1& 0&2&1\\ 2&1& 0&1 \\ 1&2& 0&2 \end{pmatrix},
  \quad
  B' = \begin{pmatrix} 0&1&1&1\\ 1& 0&1&1\\ 1&1& 0&1 \\  0& 0& 0&1 \end{pmatrix}.\]
 It can be checked that modulo the diagonal, $W_{(A',B')}$ is the space of matrices supported on entries $(1,4),(2,4),(3,4)$, so the factorization is partially infinitesimally rigid.  $M'$ is also not on the boundary of $\C M^{4 \times 4}_{4}$ but can be expanded into a boundary instance with the same zero pattern.  In the space of nonnegative factorizations of $M'$ in a neighborhood of $(A',B')$, the last column of $A'$ has full dimensional freedom, while the other entries are fixed except for the diagonal action.  The variation of the last column of $A'$ varies the last column of $B'$ while the other entries of $B'$ are also unchanged.
 
 The geometric picture of nested polytopes, $P' \subseteq \Delta' \subseteq Q'$, for $(A',B')$ is shown in Figure \ref{figure:tetra}.  The 3-simplex $\Delta'$ shares a facet $\Delta$ with $P'$.  Slicing along the affine span of $\Delta$ recovers the nested polygons $P \subseteq \Delta \subseteq Q$ associated to $(A,B)$.  The facet $\Delta$ of $\Delta'$ is locked in place by this lower dimensional configuration.  On the other hand, the vertex of $\Delta'$ opposite $\Delta$ is locally free to move in a 3-dimensional neighborhood of its position.
 \end{example}
 
 \begin{figure}
\centering
\begin{subfigure}[b]{0.6\textwidth}
\includegraphics[height=5cm]{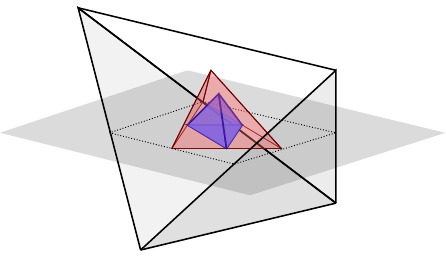}
\caption{$P' \subseteq \Delta' \subseteq Q'$}
\end{subfigure}
\begin{subfigure}[b]{0.35\textwidth}
\includegraphics[height=5cm]{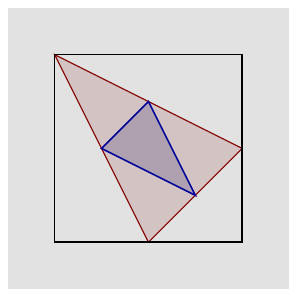}
\caption{$P \subseteq \Delta \subseteq Q$}
\end{subfigure}
\caption{In Example~\ref{ex:pir}, slicing the nested polytopes for $(A',B')$ along the hyperplane of facet $\Delta$ of produces the nested polytopes for $(A,B)$.}
\label{figure:tetra}
\end{figure}
 
 We note that in Figure 8 of \cite{mond2003stochastic} Mond, Smith and van Straten allude to the existence of configurations like Example \ref{ex:triangles}, but they do not ellaborate further on their properties or construction.

\begin{question}
 Are there non-trivial boundary components of $\C M^{m \times n}_{r}$ for $r \geq 4$ consisting of matrices with non-isolated partially infinitesimally rigid factorizations that do not come from the construction of Proposition \ref{prop:pir} or its dual?
\end{question}

\section{Symmetric matrices and completely positive rank}\label{sec:symmetric}

In this section we adapt our results to the case of symmetric matrices.  Let $M$ be an $n\times n$ real nonnegative symmetric matrix.  The {\em completely positive rank} of $M$, denoted $\cprank M$, is the smallest $r$ such that $M = AA^T$ for some nonnegative $n\times r$ matrix $A$~\cite{abraham2003completely}.  For fixed $r$ and $n$ we examine the set of symmetric matrices $M$ with rank and completely positive rank both equal to $r$, as a subset of the symmetric $n\times n$ matrices of rank $r$.

Let $M$ have rank $r$, and $A$ be a symmetric rank factor of $M$, meaning that $M = AA^T$ and $A$ is a $n\times r$ matrix.  The set of all symmetric rank factors of $M$ is
 \[ \{AC \mid C \in \Orth(r)\} \]
where $\Orth(r)$ is the orthogonal group on $\R^r$, which consists of the matrices $C$ that satisfy $C^{-1} = C^T$.  Fixing $A$, we can then identify $\Orth(r)$ with the set of rank factors of $M$ by the linear map $C \mapsto AC$.  Let
  \[ U_A = \{D \in \R^{r\times r} \mid AD \geq 0\}. \]
$M$ has completely positive rank $r$ if and only if $\Orth(r) \cap U_A$ is not empty.

Now we suppose that $\cprank M = r$.  To understand the rigidity of a nonnegative factor $A$, we study $C$ in a neighborhood of $I \in \Orth(r)$ that satisfy $AC \geq 0$.  The infinitesimal motions of $A$ consist of the tangent directions such that
 \begin{eqnarray}
&a^T_i \dot{a}_j + \dot{a}^T_i a_j = 0 \text{ for } (i,j) \in [n] \times [n], \label{eqn:sym_rigidity1}\\ 
&a_i + t\dot{a}_i \geq 0 \text{ for } i \in [n] \text{ and } t \in [0,\epsilon) \label{eqn:sym_rigidity2}
\end{eqnarray}
The tangent space of $\Orth(r)$ at $I$, denoted $T_I\Orth(r)$, consists of all skew symmetric $r \times r$ matrices $D$, which we identify with $\R^{\binom{r}{2}}$ by the coordinates above the diagonal, $d_{ij}$ with $i < j$.  The directions that satisfy Equations \ref{eqn:sym_rigidity1} are $AD$ where $D \in T_I\Orth(r)$.  Let $W_A \subseteq T_I\Orth(r)$ be the cone of tangent directions such that $A(I+tD)$ is an infinitesimal motion.  As in the nonsymmetric case, $W_A$ is cutout by linear inequalities coming from the zero entries of $A$.  If row $a_i^T$ has a zero in entry $j$, it imposes condition $d_j^Ta_i \geq 0$ where $d_j$ is the $j$th column of $D$.
Unlike in the nonsymmetric case, a factor $A$ of $M$ has no trivial deformations, so we have the following definitions.

\begin{definition}
 A nonnegative factor $A$ is {\em locally rigid} if it is an isolated solution to $M = AA^T$ and $A \geq 0$.  $A$ is {\em infinitesimally rigid} if it has no infinitesimal motions.
\end{definition}

All of the theorems from Section \ref{sec:rigid} have analogous statements for symmetric matrices and completely positive rank.  The corresponding results follow. 

\begin{proposition} \label{theorem:infinitesimally_rigid_factorizations_CP}
 $A$ is infinitesimally rigid if and only if $W_A^\vee \cong \R^{\binom{r}{2}}$.
\end{proposition}  

\begin{theorem}\label{prop:SWpoint}
If $A$ is an infinitesimally rigid nonnegative rank-$r$ factor then
  \begin{itemize}
  \item $A$ has at least $(r^2-r)/2 + 1$ zeros and
  \item for every distinct pair $i, j$ taken from $1,\ldots,r$, there must be a row of $A$ with a zero in position $i$ and not in position $j$.
 \end{itemize}
\end{theorem}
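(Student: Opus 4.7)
The plan is to mirror the proof of Theorem~\ref{prop:Wpoint}, adapting it to the skew-symmetric setting of $T_I \Orth(r) \cong \R^{\binom{r}{2}}$. The first step is to write down a distinguished generating set for $W_A^\vee$ indexed by the zeros of $A$. Each zero entry $a_{kl} = 0$ gives the facet inequality $\langle a_k e_l^T, D \rangle \geq 0$ on $W_A$. Because $D$ is skew-symmetric, the symmetric part of any matrix contributes zero to this inner product, so the inequality is equivalent to $\langle K_{kl}, D \rangle \geq 0$ with $K_{kl} := a_k e_l^T - e_l a_k^T$. Hence
\[ W_A^\vee = \cone\bigl\{ K_{kl} \mid a_{kl} = 0 \bigr\} \]
viewed inside the $\binom{r}{2}$-dimensional space of skew-symmetric matrices.

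For the first bullet, I would combine the preceding proposition (that $A$ is infinitesimally rigid if and only if $W_A^\vee$ equals the full skew-symmetric space) with the elementary fact that expressing a linear space of dimension $d$ as a conic hull of finitely many vectors requires at least $d+1$ of them. Since each generator $K_{kl}$ comes from a distinct zero of $A$, this forces $A$ to have at least $(r^2-r)/2 + 1$ zero entries.

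For the second bullet, I would fix distinct indices $i, j$ and examine the $(i,j)$ coordinate of the generators $K_{kl}$. A direct computation gives $(K_{kl})_{ij} = a_{ki}\delta_{jl} - a_{kj}\delta_{il}$, which is nonzero only when $l \in \{i, j\}$. For $W_A^\vee$ to contain a generator with a strictly positive value at coordinate $(i,j)$, there must exist a row $a_k$ of $A$ with $a_{kj} = 0$ (the zero being used) and $a_{ki} > 0$; a strictly negative value symmetrically demands a row $a_k$ with $a_{ki} = 0$ and $a_{kj} > 0$. Letting $(i,j)$ range over all distinct ordered pairs recovers the stated combinatorial condition.

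I do not anticipate a genuine obstacle: once the skew-symmetric generating set has been correctly identified, both bullets follow from counting and sign arguments essentially identical to those in the proof of Theorem~\ref{prop:Wpoint}. The only subtle bookkeeping is confirming that the generator attached to each zero is $K_{kl}$ rather than $a_k e_l^T$ itself, so that the sign pattern at coordinate $(i,j)$ is read off from the correct skew-symmetric representative.
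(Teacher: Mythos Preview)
Your proposal is correct and follows essentially the same route as the paper's proof: both use that $W_A^\vee$ must be the full $\binom{r}{2}$-dimensional skew-symmetric space, count generators to get the zero bound, and read off the sign at coordinate $d_{ij}$ to obtain the combinatorial condition on rows. The only cosmetic difference is that you explicitly project each generator to its skew-symmetric part $K_{kl} = a_k e_l^T - e_l a_k^T$, whereas the paper leaves this projection implicit and argues directly with the coordinates $d_{ij}$.
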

\begin{proof}
If $W_A^\vee \cong \R^{\binom{r}{2}}$, then it must have at least $\binom{r}{2}+1$ cone generators.  The generators of $W_{A}^\vee$ are in bijection with the zeros in $A$.
 
For each coordinate $d_{ij}$ with $i < j$ there must be at least one generator of $W_A^\vee$ with a strictly positive value there, and one with a strictly negative value.  Since $A$ is nonnegative, to get a positive value in coordinate $d_{ij}$ requires $A$ to have a row with zero in the $j$th entry and a positive value in the $i$th entry.  To get a negative value requires $A$ to have a row with zero in the $i$th entry and a positive value in the $j$th entry, since $d_{ji} = -d_{ij}$.  
\end{proof}

\begin{corollary} \label{cor:sym_positive}
If $A$ is an infinitesimally rigid nonnegative rank-$r$ factor with exactly $(r^2 - r)/2+1$ zeros, then $M$ is strictly positive.
\end{corollary}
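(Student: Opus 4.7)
My plan is to adapt the proof of Corollary~\ref{corollary:infinitesimally_rigid_positive} to the symmetric setting, exploiting that $W_A^\vee \cong \R^{\binom{r}{2}}$ is generated as a convex cone by exactly $\binom{r}{2}+1$ skew-symmetric vectors $v_{i,k} := a_i e_k^T - e_k a_i^T$, one for each zero of $A$. Since $\binom{r}{2}+1$ is the minimum number of convex-cone generators for $\R^{\binom{r}{2}}$, this is a minimal generating set, so the unique (up to positive scaling) linear relation among these generators has all $\binom{r}{2}+1$ coefficients nonzero. I will contradict this by producing a shorter nonnegative relation whenever $M = AA^T$ has a vanishing entry.

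For a diagonal zero $M_{ii} = 0$, I would use that $\|a_i\|^2 = 0$ forces $a_i = 0$, which makes all $r$ generators $v_{i,1},\ldots,v_{i,r}$ identically zero; the effective generating set then has at most $\binom{r}{2}+1-r$ nonzero vectors, which cannot positively span $\R^{\binom{r}{2}}$ when $r \geq 3$, contradicting infinitesimal rigidity. For an off-diagonal zero $M_{ij} = 0$ with $i \neq j$, nonnegativity of $A$ forces $\mathrm{supp}(a_i) \cap \mathrm{supp}(a_j) = \emptyset$, and I would verify the identity
\[
 a_i a_j^T - a_j a_i^T \;=\; \sum_{k=1}^r a_{jk}\, v_{i,k} \;=\; -\sum_{k=1}^r a_{ik}\, v_{j,k}
\]
by direct expansion. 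Disjointness of supports guarantees that $a_{jk} > 0 \Rightarrow a_{ik}=0$, so every $v_{i,k}$ with a positive coefficient on the right is actually a generator of $W_A^\vee$, and symmetrically for the rightmost sum. Adding the two expressions yields the nonnegative relation $0 = \sum_k a_{jk} v_{i,k} + \sum_k a_{ik} v_{j,k}$, which is nontrivial because $a_i, a_j \neq 0$ and involves only generators indexed by zeros in rows $i$ and $j$, at most $|\mathrm{supp}(a_i)| + |\mathrm{supp}(a_j)| \leq r$ in total. For $r \geq 3$ this is strictly fewer than $\binom{r}{2}+1$, contradicting minimality.

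The main obstacle is locating the correct symmetric analog of the outer-product identity used in the nonsymmetric proof; once $a_ia_j^T - a_ja_i^T = \sum_k a_{jk} v_{i,k}$ is in hand, the rest is a counting exercise parallel to Corollary~\ref{corollary:infinitesimally_rigid_positive}. A secondary point worth flagging is that the counting step requires $\binom{r}{2}+1 > r$, i.e.\ $r \geq 3$; at $r = 2$ the statement itself fails, as for instance the rows $(0,1), (1,0), (1,1)$ give an infinitesimally rigid factor with $M_{12}=0$, so the corollary should be read with that implicit restriction.
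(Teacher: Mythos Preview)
Your approach is essentially the same as the paper's: exploit that a minimal positive spanning set of $\R^{\binom{r}{2}}$ admits (up to scaling) a unique linear relation with full support, then manufacture a shorter nonnegative relation from a zero of $M$. Your write-up is in fact more careful than the paper's in two respects. First, you correctly project the generators to the skew-symmetric space as $v_{i,k}=a_ie_k^T-e_ka_i^T$ and verify the identity $a_ia_j^T-a_ja_i^T=\sum_k a_{jk}v_{i,k}=-\sum_k a_{ik}v_{j,k}$; the paper's proof speaks of ``the outer product matrix $a_i^Ta_j$'' without passing to its skew part, which is what the symmetric setting actually requires. Second, you separate out the diagonal case $M_{ii}=0$ and, more importantly, you observe that the counting $|\mathrm{supp}(a_i)|+|\mathrm{supp}(a_j)|\le r<\binom{r}{2}+1$ needs $r\ge 3$, with a valid counterexample at $r=2$; the paper's statement and proof omit this restriction.
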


\begin{proof}
If $A$ is infinitesimally rigid, then the dual cone $W^{\vee}_A \cong \R^{(r^2-r)/2}$.  If $A$ has only $(r^2 - r)/2+1$ zeros, the corresponding generating set of $W^{\vee}_A$ is minimal, so the only linear relation among the generators must be among all $(r^2-r)/2$.

If $AA^T$ has a zero in entry $ij$ then rows $a_i$ and $a_j$ of $A$ have zeros in complementary positions so that $a_i a_j^T = 0$.  Since the support of $a_j$ is contained in the set of columns for which $a_i$ is zero, the outer product matrix $a_i^Ta_j$ can be expressed as a nonnegative combination of the dual vectors coming from $a_i$.  Similarly, the matrix $-a_i^Ta_j$ can be expressed as a nonnegative combination of the dual vectors coming from $a_j$.  Summing these gives a linear relation among a strict subset of the generators, which is a contradiction.
\end{proof}

\begin{corollary}\label{lemma:sym_row}
If $A$ is an infinitesimally rigid nonnegative factor, then there is at least one zero in every column of $A$.
\end{corollary}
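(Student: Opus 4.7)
The plan is to deduce this corollary immediately from the second bullet of Theorem \ref{prop:SWpoint}, exactly mirroring how Corollary \ref{lemma:number_of_zeros_in_a_row} was deduced from Theorem \ref{prop:Wpoint} in the non-symmetric case. That theorem guarantees that for every distinct pair $i,j \in \{1,\ldots,r\}$, there is a row of $A$ with a zero in position $i$ and a nonzero value in position $j$. Fixing any column index $i$ and choosing any $j \neq i$ (which is possible as long as $r \geq 2$), we obtain a row witnessing a zero in column $i$, which is all that is claimed.

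The only edge case to mention is the degenerate situation $r = 1$, but in that case $A$ is a single nonnegative column vector $a$ with $M = aa^T$; the orthogonal group $\Orth(1) = \{\pm 1\}$ acts on factors, so $A$ is always an isolated solution, and $W_A = T_I\Orth(1) = 0$, so $A$ is automatically infinitesimally rigid with no constraint on its zeros. So we may assume $r \geq 2$ for a nontrivial statement.

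I do not anticipate any real obstacle here: the content of the corollary is entirely absorbed into the combinatorial necessary condition already established in Theorem \ref{prop:SWpoint}, and the proof is a one-line appeal to that theorem, just as in the non-symmetric analogue.
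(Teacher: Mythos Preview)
Your proposal is correct and matches the paper's approach exactly: the paper states this corollary without proof, intending it as an immediate consequence of the second bullet of Theorem~\ref{prop:SWpoint}, just as Corollary~\ref{lemma:number_of_zeros_in_a_row} follows from Theorem~\ref{prop:Wpoint}. Your remark on the $r=1$ edge case is a reasonable aside but not needed for the argument.
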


\begin{corollary}
If $M$ is strictly positive and $A$ is an infinitesimally rigid nonnegative rank-$r$ factor of $M$, then there are at most $r-2$ zeros in every row of $A$.
\end{corollary}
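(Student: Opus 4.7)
The plan is to mimic the proof of the earlier nonsymmetric analog of this statement, adapting the argument to the fact that now $M = AA^T$ involves only one factor matrix. The key observation is that strict positivity of $M$ forces a compatibility between the support patterns of any two rows of $A$: for every pair of indices $i,j$, the entry $M_{ij} = a_i^T a_j$ is positive, so the supports of $a_i$ and $a_j$ must share at least one common column.

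First I would note that since $M$ is strictly positive, every diagonal entry $M_{ii} = \|a_i\|^2$ is positive, so no row of $A$ can be identically zero. The main step is then a proof by contradiction: assume some row $a_i$ has exactly $r-1$ zeros, so that its support consists of a single index $k \in [r]$. For every other index $j$, the inner product $a_i^T a_j$ reduces to a single term involving only the $k$th entry of $a_j$, and since $M_{ij} > 0$ this forces the $k$th entry of $a_j$ to be nonzero for every $j$. Hence column $k$ of $A$ contains no zeros, which directly contradicts Corollary~\ref{lemma:sym_row}, the symmetric version asserting that every column of an infinitesimally rigid factor must have at least one zero entry.

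I expect no substantive obstacle in this argument; it is a short deduction from Corollary~\ref{lemma:sym_row} combined with positivity of $M$. The only point requiring minor care is the edge case $r=2$, where $r-2 = 0$ means $A$ must be strictly positive, and one should check that the argument still applies: in that case, $r-1 = 1$, a row with one zero would have a single nonzero entry in some column $k$, and the same reasoning still forces column $k$ to have no zeros, again contradicting Corollary~\ref{lemma:sym_row}. Thus the proof will be a direct two-line argument parallel to the one already given in the nonsymmetric setting.
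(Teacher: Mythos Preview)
Your proposal is correct and follows essentially the same approach as the paper's proof: both argue that a row with $r-1$ zeros would force the column corresponding to its unique nonzero entry to be entirely positive, contradicting Corollary~\ref{lemma:sym_row}. Your version simply makes explicit which column is zero-free and why, whereas the paper phrases this more tersely.
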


\begin{proof}
Since $M$ is positive, no row of $A$ can contain only zeros.
If a row of $A$ contains $r-1$ zeros, then there is a column of $A$ that does not contain any zero, because otherwise $AA^T$ would have a zero entry. This contradicts Corollary~\ref{lemma:sym_row}. 
\end{proof}

\begin{lemma}\label{lemma:sym_entries}
If $A$ is an infinitesimally rigid nonnegative rank-$r$ factor with $(r^2-r)/2+1$ zeros, then there are at most $r-1$ zeros in every column of $A$.
\end{lemma}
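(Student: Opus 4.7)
The plan is to mirror the proof of Lemma~\ref{lemma:max_number_entries} in the symmetric setting, exploiting the fact that the generator of $W_A^\vee$ associated with a zero at position $(i,j)$ of $A$ is supported only on coordinates of $D \in T_I\Orth(r)$ that involve the index $j$.

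First, I would observe that since $A$ is infinitesimally rigid with exactly $\binom{r}{2}+1$ zeros, the corresponding $\binom{r}{2}+1$ generators of $W_A^\vee$ positively span $\R^{\binom{r}{2}}$. Because any $N$ vectors in $\R^N$ span only a half-space as a cone, this number of generators is minimal, so the space of linear relations among them is one-dimensional. Moreover the unique (up to scaling) linear relation $\sum c_k v_k = 0$ must have all coefficients $c_k$ strictly nonzero: if some $c_k$ vanished, then the remaining generators would still positively span $\R^{\binom{r}{2}}$ from fewer than $\binom{r}{2}+1$ vectors, which is impossible. (This is exactly the hypothesis used in the proof of Corollary~\ref{cor:sym_positive}.)

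Next, I would unpack the support of the generator of $W_A^\vee$ coming from a zero at position $(i,j)$. Identifying $T_I\Orth(r)$ with $\R^{\binom{r}{2}}$ via the coordinates $d_{kl}$ for $k<l$, and writing $D_{kj} = d_{kj}$ for $k<j$, $D_{kj} = -d_{jk}$ for $k>j$, and $D_{jj}=0$, the inequality $d_j^T a_i \geq 0$ becomes a linear functional supported precisely on the $r-1$ coordinates $\{d_{kj} : k<j\} \cup \{d_{jk} : k>j\}$. Hence the generators arising from all zeros in column $j$ lie in the same $(r-1)$-dimensional coordinate subspace of $\R^{\binom{r}{2}}$.

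Finally I would conclude by contradiction: if column $j$ contained $r$ zeros, the $r$ associated generators would lie in an $(r-1)$-dimensional subspace and therefore satisfy a nontrivial linear relation among themselves. This relation is supported on a proper subset of the $\binom{r}{2}+1$ generators, contradicting the fact established in the first step that the only linear relation has all coefficients nonzero. The main obstacle is purely bookkeeping, namely the correct description of the skew-symmetric support of each generator; once that is in place the argument is a direct parallel of Lemma~\ref{lemma:max_number_entries}.
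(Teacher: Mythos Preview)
Your proposal is correct and follows essentially the same approach as the paper's proof: both argue that the $\binom{r}{2}+1$ generators of $W_A^\vee$ form a minimal positive spanning set of $\R^{\binom{r}{2}}$, so the unique linear relation involves all of them, and then observe that $r$ generators arising from a single column $j$ lie in the $(r-1)$-dimensional coordinate subspace of functionals touching the index $j$, yielding a forbidden shorter relation. Your write-up simply spells out the skew-symmetric bookkeeping that the paper leaves implicit.
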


\begin{proof}
As in the proof of Corollary \ref{cor:sym_positive}, the only linear relation among the generators of $W^{\vee}_{(A,B)}$ must be among all $(r^2-r)/2+1$ generators.  If there were $r$ zeros in the same column of $A$, then there would be $r$ generators of $W^{\vee}_{A}$ contained in a $r-1$ dimensional subspace, implying a smaller linear relation which is impossible.
\end{proof}

\begin{lemma}\label{lemma:sym_zero rectangles}
 Let $A$ be an infinitesimally rigid nonnegative rank-$r$ factorization with $(r^2-r)/2+1$ zeros.  Let $\alpha\subseteq [r]$ and suppose $A$ has a $k \times |\alpha|$ submatrix of zeros with columns $\alpha$.  Then
  \[ k \leq (r-|\alpha|). \]
\end{lemma}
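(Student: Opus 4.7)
The plan is to mirror the proof of Lemma~\ref{lemma:zero rectangles}. First I would invoke the setup from Corollary~\ref{cor:sym_positive}: because $W_A^{\vee}$ is a linear space of dimension $\binom{r}{2}$ and the $(r^2-r)/2+1$ generators coming from the zeros of $A$ form a minimal generating set, the only linear relation among them must involve all of them. So any proper subset of the generators that is forced to be linearly dependent will yield a contradiction.

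Next, I would identify the generators arising from the $k\times|\alpha|$ block of zeros and compute their support. The inequality $d_j^{T}a_i \geq 0$ coming from a zero at position $(i,j)$ of $A$ can be rewritten as $\langle e_j a_i^{T},D\rangle\geq 0$, and since $D$ is skew-symmetric, only the skew-symmetric part of $e_j a_i^{T}$ contributes. Thus the associated generator of $W_A^{\vee}$, viewed inside the space of skew-symmetric $r\times r$ matrices, can be taken to be $a_i e_j^{T}-e_j a_i^{T}$.

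Now fix the $k$ rows $R\subseteq[n]$ of the zero block, so that $(a_i)_j=0$ for every $i\in R$ and every $j\in\alpha$. For such $i$, the support of $a_i$ lies in $[r]\setminus\alpha$. Consequently, the generator $a_i e_j^{T}-e_j a_i^{T}$ is supported on entries in $\alpha\times([r]\setminus\alpha)$ together with their transposes. As skew-symmetric matrices these generators all lie in a linear subspace of dimension $|\alpha|(r-|\alpha|)$.

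Finally, if $k>r-|\alpha|$ then the $k|\alpha|$ generators produced by the zero block lie in a subspace of strictly smaller dimension $|\alpha|(r-|\alpha|)<k|\alpha|$, so there is a nontrivial linear relation among them. This relation uses strictly fewer than $(r^2-r)/2+1$ of the generators, contradicting minimality. Hence $k\leq r-|\alpha|$. The one subtlety I would pay attention to is making the translation between the inequality $d_j^{T}a_i\geq 0$ and the skew-symmetric generator rigorous, since this is the only place where the symmetric setting differs from the argument used in the nonsymmetric case.
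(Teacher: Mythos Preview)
Your proposal is correct and follows essentially the same approach as the paper's proof: both invoke the minimality of the generating set of $W_A^\vee$ (so that any linear relation must involve all $(r^2-r)/2+1$ generators), identify the $k|\alpha|$ generators coming from the zero block as lying in an $|\alpha|(r-|\alpha|)$-dimensional subspace, and conclude $k|\alpha|\leq |\alpha|(r-|\alpha|)$. Your treatment is in fact a bit more explicit than the paper's about the skew-symmetric translation, writing the generator as $a_ie_j^T-e_ja_i^T$, whereas the paper simply asserts the support is on entries $([r]\setminus\alpha)\times\alpha$; this extra care is appropriate and does not change the argument.
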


\begin{proof}
As in the proof of Corollary \ref{corollary:infinitesimally_rigid_positive}, a generating set of size $(r^2-r)/2+1$ is minimal, so the only linear relation among the generators must be among all of them.  It can be checked that the zeros of $A$ described above correspond to $k|\alpha|$ generators of $W^{\vee}_{A}$ supported on entries $([r] \setminus \alpha) \times \alpha$. The number of generators cannot exceed the number of entries they are supported on, which gives the inequality
\[ k|\alpha| \leq (r-|\alpha|)|\alpha|. \]
\end{proof}

Let $Z_A$ be the matrix whose columns are the generators of $W_A^V$ and let $c$ be the number of columns of this matrix. 

\begin{proposition} \label{prop:generic_locally_rigid_is_infinitesimally_rigid_CP}
Let $M$ be a rank-$r$ matrix. If $AA^T$ is a size-$r$ completely positive factorization of $M$ that is locally rigid but not infinitesimally rigid, then $\text{K-}rank(Z_A)<\min(c,\binom{r}{2})$. 
\end{proposition}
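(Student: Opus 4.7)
The plan is to follow the proof of Proposition~\ref{prop:generic_locally_rigid_is_infinitesimally_rigid} with the appropriate substitutions for the symmetric setting: $W_A$ lives in $T_I\Orth(r) \cong \R^{\binom{r}{2}}$ rather than $\R^{r^2}$, and the only trivial infinitesimal deformation is the zero direction rather than an $r$-dimensional space of diagonals. The goal is to establish $0 < \dim W_A < \binom{r}{2}$ and then convert this into a Kruskal-rank bound by exhibiting a nontrivial lineality space inside $W_A^\vee$.

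The lower bound $\dim W_A > 0$ is immediate from the failure of infinitesimal rigidity, since the trivial motion space is $\{0\}$. For the upper bound $\dim W_A < \binom{r}{2}$, I would argue by contradiction: if $W_A$ were full-dimensional, it would contain an interior point $D$ with $(AD)_{ij} > 0$ at every zero entry of $A$, and the curve $A\exp(tD)$ (which remains a rank factor of $M$ because $\exp(tD) \in \Orth(r)$ for skew-symmetric $D$) would be strictly nonnegative for small $t > 0$: zeros become positive to first order via $A\exp(tD) = A + tAD + O(t^2)$, and positive entries stay positive by continuity. This nontrivial curve of factorizations contradicts local rigidity. I expect this passage from tangent-space data to actual motions on $\Orth(r)$ to be the main technical hurdle, since one cannot use a literal linear path as in the nonsymmetric setting; however the first-order analysis is robust under the higher-order correction supplied by $\exp$, so the argument should go through without difficulty.

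Next I would mimic the refinement step in the nonsymmetric proof to find a zero entry $(i,j)$ of $A$ such that $(AD)_{ij} = 0$ for every $D \in W_A$. If no such $(i,j)$ existed, then for each zero entry one could pick a witness $D^{(i,j)} \in W_A$ with strictly positive derivative there, and a conic combination of these witnesses would sit in $W_A$ with strictly positive derivative at every zero entry simultaneously, producing a nontrivial motion by the same $\exp$ argument and contradicting local rigidity. Consequently the generator $a_i e_j^T$ of $W_A^\vee$ coming from this zero entry is orthogonal to the entire span of $W_A$, so it lies in the lineality space of $W_A^\vee$, whose dimension $k := \binom{r}{2} - \dim W_A$ is strictly between $0$ and $\binom{r}{2}$.

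Finally, I would invoke the elementary cone-geometric fact that a finitely generated convex cone whose lineality space has dimension $k$ must contain at least $k+1$ generators in that lineality space (a positive linear dependence among some generators is what forces bidirectionality). These $k+1$ columns of $Z_A$ lie in a $k$-dimensional subspace and are therefore linearly dependent, giving $\Krank(Z_A) \leq k < \binom{r}{2}$, and since $k+1 \leq c$ we also obtain $\Krank(Z_A) \leq k < c$. Together these yield the desired bound $\Krank(Z_A) < \min(c, \binom{r}{2})$.
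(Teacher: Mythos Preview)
Your proposal is correct and follows essentially the same approach as the paper, which simply declares the proof ``analogous'' to Proposition~\ref{prop:generic_locally_rigid_is_infinitesimally_rigid}. Your explicit use of the curve $t \mapsto A\exp(tD)$ to stay on $\Orth(r)$ is the right adaptation for the symmetric setting (a linear path $I+tD$ no longer lands in the group), and your first-order analysis at the zero entries of $A$ handles the $O(t^2)$ correction without trouble. Note that your step~2 (full-dimensional $W_A$ forces a motion) is subsumed by step~3 (the conic-combination argument already produces a nonzero $D$ with strictly positive derivative at every zero entry, hence a nontrivial lineality direction in $W_A^\vee$), so the separate full-dimensionality contradiction is redundant---but harmless.
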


The proof of this proposition is analogous to the proof of Proposition~\ref{prop:generic_locally_rigid_is_infinitesimally_rigid}.
We remark that most other conclusions of Sections \ref{sec:isolated} and \ref{sec:rigidity_and_boundaries} can also be extended to the symmetric case, but we leave this to the enterprising reader.

Proposition~\ref{prop:generic_locally_rigid_is_infinitesimally_rigid_CP} together with Proposition~\ref{theorem:infinitesimally_rigid_factorizations_CP} gives Algorithm~\ref{algorithm:local_rigidity_CP} for determining infinitesimal and local rigidity of a nonnegative matrix factorization.

\begin{algorithm}
\caption{Local rigidity of a size-$r$ completely positive factorization given by $AA^T$}\label{algorithm:local_rigidity_CP}
\begin{algorithmic}[1]
\Procedure{LocalRigidityCPF}{$A,r$}
   \State Construct the matrix $Z_{A}$. Let $c$ be the number of columns of $Z_{A}$.
   \If{the Kruskal-rank of $Z_{A}$ is equal to $\min(c,\binom{r}{2})$}
       \State construct the polyhedral cone $W^V_{A}$ spanned by the columns of $Z_{A}$.
       \If{$W^V_{A}$ is isomorphic to $\R^{\binom{r}{2}}$}
          \State \textbf{return} $AA^T$ is infinitesimally and locally rigid.
       \Else
          \State \textbf{return} $AA^T$ is neither infinitesimally nor locally rigid.
       \EndIf     
   \Else
      \State \textbf{return} $AA^T$ is not infinitesimally rigid; local rigidity cannot be determined.
   \EndIf
\EndProcedure
\end{algorithmic}
\end{algorithm}

\appendix
\section{Infinitesimally rigid factorizations for $5 \times 5$ matrices of nonnegative rank four} \label{sec:realizations_of_infinitesimally_rigid_factorizations}

In this appendix, we will present infinitesimally rigid factorizations for $5 \times 5$ matrices with positive entries and of nonnegative rank four. In particular, we will show that for every zero pattern with $13$ zeros satisfying the conditions of Theorem~\ref{prop:Wpoint}, there exists an infinitesimally rigid nonnegative factorization realizing this zero pattern.

We consider zero patterns up to the action that permutes the rows of $A$, simultaneously permutes the columns of $A$ and the rows of $B$, permutes the rows of $B$ and transposes $AB$. As the first step, we use \texttt{Macaulay2}~\cite{M2} to construct an orbit representative under this action for all zero patterns with $13$ zeros satisfying the  conditions of Theorem~\ref{prop:Wpoint}. There are 15 such orbit representatives.

Then for every zero pattern we construct random realizations by choosing non-zero entries uniformly at random between $1$ and $1000$.
Finally, we use \texttt{Normaliz}~\cite{Normaliz} to find nonnegative factorizations that are infinitesimally rigid based on Definition~\ref{def:rigid}. For each of the $15$ zero patterns, we are able to construct an infinitesimally rigid realization:

\begin{scriptsize}
$$
\begin{pmatrix}
104184 & 229176 & 94392 &  336996 & 77040\\
94663 & 117528 & 485070 & 3404 &   7979\\
535318 & 168896 & 1169348 & 255210 & 182576 \\
156494 & 310908 & 1119179 & 316225 &  460213 \\
763917 & 337540 & 876372 &  1016103 & 574666
\end{pmatrix}
=
\begin{pmatrix}
0 & 0 & 396 & 108\\
0 & 0 & 4 & 555\\
0 & 470 & 0 & 812\\
455 & 0 & 0 & 926\\
194 & 761 & 550 & 0
\end{pmatrix}
\begin{pmatrix}
0 & 260 & 681 & 695 & 985\\
847 & 0 & 978 & 543 & 366\\
217 & 522 & 0 & 851 & 191\\
169 & 208 & 874 & 0 & 13
\end{pmatrix}
$$
$$
\begin{pmatrix}
210729&402419& 94831&122655&193579\\
242132&124696&781275&579876&739205\\
618738&197370&434676&846486&1143228\\
50400&233301&221994& 60009& 34134\\
107007& 33966&457653&315558&360201
\end{pmatrix}
=
\begin{pmatrix}
  0&  0&221&407\\
  0&764&  0&143\\
  0&444&918&  0\\
249&  0&  0&225\\
189&336& 27&  0
\end{pmatrix}
\begin{pmatrix}
  0&149&681&241& 91\\
275&  0&979&759&958\\
541&215&  0&555&782\\
224&872&233&  0& 51
\end{pmatrix}.
$$
$$
\begin{pmatrix}
573705&806520&167622&246500&531659\\
397096& 39600&299176& 63720&274120\\
131646&403260& 30269&226915&264510\\
  9114& 85160&311182&827468&851798\\
147857&  3200&351037&599025&697755
\end{pmatrix}
=
\begin{pmatrix}
  0&  0&425&921\\
  0&472&  0& 80\\
  0&  1&391&163\\
862&  0& 98&  0\\
640&199&  0&  0
\end{pmatrix}
\begin{pmatrix}
  0&  5&361&894&927\\
743&  0&603&135&525\\
 93&825&  0&580&538\\
580&495&182&  0&329
\end{pmatrix}
$$
$$
\begin{pmatrix}
30893 & 319912 & 149770 & 873 & 111428\\ 
383490 & 87990 & 5580 & 628440 & 587250\\ 
560076 & 1030324 & 331070 & 288045 & 350647\\ 
203830 & 305184 & 277512 & 264376 & 205933\\ 
90911 & 142936 & 500784 & 618842 & 609633
\end{pmatrix}
=
\begin{pmatrix}
0 &  0 &  356 & 9\\
0 &  870 &  0 &  30\\
0 &  302 & 469 & 731\\
403 & 0 &  0 &  374\\
852 & 190 & 147 & 0
\end{pmatrix}
\begin{pmatrix}
0 & 0 & 516 & 566 & 511\\
422 & 73 &  0 &  719 & 675\\
73 &  878 & 416 & 0 &  313\\
545 & 816 & 186 & 97 &  0
\end{pmatrix}.
$$
$$
\begin{pmatrix}
553924 & 99854 & 348351 & 183860 & 20114 \\ 401268 & 3372 & 802602 & 250881 & 155672 \\ 1091328 & 648606 & 538803 &176341 & 151574 \\ 472277 & 506248 & 136080 & 591292 & 591056 \\ 377978 & 477454 & 470565 & 322776 & 461574
\end{pmatrix}
=
\begin{pmatrix}
0 &0 &113 &634 \\ 0 &671 &0 &562 \\ 0 &71 &759 &576 \\ 697 &0 &0 &270 \\ 346 &520 &267 &0
\end{pmatrix}
\begin{pmatrix}
401 &724 &0 &736 &848 \\ 0 &0 &774 &131 &232 \\ 896 &850 &255 &0 &178 \\ 714 &6 &504 &290 &0
\end{pmatrix}
$$
$$
\begin{pmatrix}
292425 & 60900 & 31581 & 170931 & 7358\\
8056 & 89782 & 548546 & 684912 & 505520\\
98680 & 758632 & 1234092 & 742008 & 1123962\\
428876 & 6358 & 306000 & 865802 & 851174\\
888312 & 823270 & 758974 & 620872 & 1215638
\end{pmatrix}
=
\begin{pmatrix} 
0& 0& 525& 13\\ 
0& 106& 0& 751\\ 
0& 888& 56& 795\\ 
578& 0& 0& 500\\ 
568& 866& 720& 0
\end{pmatrix}
\begin{pmatrix}
742 & 11 & 0 & 709 & 983\\
76 & 847 & 839 & 0 & 759\\
557 & 116 & 45 & 303 & 0\\
0 & 0 & 612 & 912 & 566
\end{pmatrix}.
$$
$$
\begin{pmatrix}
 348984& 214425& 353658&  81504& 608634\\
 333621&  42811& 108265& 141389&  79520\\
 457700&   5980& 467723& 866662& 841426\\
  91308& 220419& 483054& 706686&1353778\\
 342940& 384918& 120318& 550726& 945556\\
\end{pmatrix}
=
\begin{pmatrix}
  0&  0&867&288\\
  0&112&  0&295\\
937&  0&  0&460\\
832&102&761&  0\\
110&898&298&  0
\end{pmatrix}
\begin{pmatrix}
  0&  0&319&786&898\\
358&348&  0&517&710\\
 72&243&286&  0&702\\
995& 13&367&283&  0
\end{pmatrix}
$$
$$
\begin{pmatrix}
 88076&294646&658787&902872&244559\\
  2216&  4216&596705&652698&250465\\
279360&180864&769506 &1051380&391634\\
553284&826606&765406&293965&883775\\
696039&897917&148301&832169&169525
\end{pmatrix}
=
\begin{pmatrix}
  0&  0&454&713\\
  0&  8&  0&711\\
288&  0&  0&926\\
239&998&232&  0\\
541& 37&830&  0
\end{pmatrix}
\begin{pmatrix}
970&628&  0&699&257\\
277&527&733&  0&824\\
194&649&146&547&  0\\
  0&  0&831&918&343
\end{pmatrix}
$$
$$
\begin{pmatrix}
948201&723609&958755&591858&397953\\
222448&218040& 30429&348793& 15825\\
329588&  7189&623001& 12012&469185\\
467424&160704&115092&835504&343912\\
1114797&932972&975775&997164&636096
\end{pmatrix}
=
\begin{pmatrix}
  0&  0&867&753\\
  0&211&  0&189\\
429&  0&553&  0\\
556&864&  0&  0\\
552&270&738&923
\end{pmatrix}
\begin{pmatrix}
  0&  0&207& 28&502\\
541&186&  0&949& 75\\
596& 13&966&  0&459\\
573&946&161&786&  0
\end{pmatrix}
$$
$$
\begin{pmatrix}
264293& 89201&411390& 21016& 54492\\
255674&383544&693861&252463&211653\\
212205&  6665&216806&  6450&103802\\
469696&393840&450523&564374&956188\\
288927&197161&105742&300945&433801
\end{pmatrix}
=
\begin{pmatrix}
  0&  0&239&284\\
  0&351&  0&893\\
 86&  0&215&  0\\
598&954&  0&175\\
154&545& 31&  0
\end{pmatrix}
\begin{pmatrix}
 0&  0&526& 75&637\\
474&360&  0&531&603\\
987& 31&798&  0&228\\
100&288&777& 74&  0
\end{pmatrix}
$$
$$
\begin{pmatrix}
  3230&104329&410573&875858&188790\\
 22527& 66939&204273& 81606& 13419\\
123988& 34611& 82056&713192&305348\\
596448&338171&559708&395192&624199\\
     1460035&246567&270382&584688 &1302924
\end{pmatrix}
=
\begin{pmatrix}
  0&  0&870&323\\
  0& 21&  0&201\\
139&  0&789&  0\\
623& 36&  0&556\\
639&911&480&  0
\end{pmatrix}
\begin{pmatrix}
892&249&  0&272&965\\
977& 96&242&  0&639\\
  0&  0&104&856&217\\
 10&323&991&406&  0
\end{pmatrix}
$$
$$
\begin{pmatrix}
64244 & 119613 & 501370 & 37843 & 259408\\
85315 & 371265 & 69495 & 801995 & 33660\\
83956 & 5004 & 737712 & 957860 & 230056\\
46287 & 566084 & 451221 & 397664 & 269200\\
144598 & 34999 & 923447 & 1330101 & 293244
\end{pmatrix}
=
\begin{pmatrix}
0 & 0 & 523 & 41\\
0 & 510 & 0 & 565\\
772 & 0 & 0 & 556\\
64 & 656 & 417 & 0\\
853 & 13 & 77 & 901
\end{pmatrix}
\begin{pmatrix}
0 & 0 & 867 & 576 & 298\\
0 & 718 & 0 & 550 & 66\\
111 & 228 & 949 & 0 & 496\\
151 & 9 & 123 & 923 & 0
\end{pmatrix}
$$
$$
\begin{pmatrix} 310392& 195156& 317952& 492156& 169188\\
  82320& 581120&  90160& 709152&  19024\\
 519783& 180720&1398418&  74387& 728134\\
  70245& 244363& 505935& 527965& 176138\\
 451143& 501811& 582768& 158964& 396949
\end{pmatrix}
=
\begin{pmatrix}
  0&  0&276&756\\
  0&656&  0&784\\
901&  0&619& 16\\
440&202&  0&669\\
135&493&539&  0
\end{pmatrix}
\begin{pmatrix}
  0&  0&975& 71&387\\
  0&703&  0&303& 29\\
837&288&837&  0&613\\
105&153&115&651&  0
\end{pmatrix}
$$
$$
\begin{pmatrix}
72200& 697140& 19076& 191446& 252354\\
341204& 824131& 90064& 90804& 450580\\
292600& 86846& 319858& 425581& 57573\\
493288& 887466& 592538& 286784& 604086\\
809126& 281001& 625050& 719417& 276676
\end{pmatrix}
=
\begin{pmatrix}
0&0&76&822\\
0&433&0&644\\
490&0&308&79\\
934&626&0&570\\
831&377&539&0
\end{pmatrix}
\begin{pmatrix}
0&0&495&221&68\\
788&651&208&0&584\\
950&66&251&994&0\\
0&842&0&141&307
\end{pmatrix}
$$
$$
\begin{pmatrix}
279265&274840&187355&655433&214052\\
270970& 68600&734264   &  1018514& 89856\\
341531&544696&235555&187012&948873\\
417526&121556&855865&841310&486784\\
 15933&287113&730363&580464&439746
\end{pmatrix}
=
\begin{pmatrix}
  0&  0&236&707\\
  0&702&  0&686\\
849&  0&507&136\\
684&725&  0&470\\
 47&914&326&  0
\end{pmatrix}
\begin{pmatrix}
339&109&235&  0&576\\
  0&  0&787&588&128\\
  0&865&  0&132&907\\
395&100&265&883&  0
\end{pmatrix}
$$
\end{scriptsize}

We conjecture that the $15$ nonnegative factorizations above are in fact globally rigid based on the following evidence. For each of the $15$ matrices above we ran the program by Vandaele, Gillis, Glineur and Tuyttens~\cite{vandaele2016heuristics} with the simulated annealing heuristic ``sa'' ten times. Each run consisted of at most ten attempts and the target precision was $10^{-15}$. In $13$ out of the $15$ cases at least one out of ten runs would find a nonnegative factorization of size four. Each time when a size-four nonnegative factorization was found, it was the same as the original factorization. On average  $6.3$ runs were successful finding a size-four nonnegative factorization. For the third and ninth matrix none of the runs found a size-four nonnegative factorization. The algorithm was much slower for the eighth matrix than for any other matrix in the list. Although only three runs found a nonnegative factorization of target precision in this case, all other solutions looked similar to the original solution as well. This suggests that the algorithm converges slowly for this matrix. In summary, in each of the cases, either this program could not find a nonnegative factorization of target precision or it would find the nonnegative factorization that we started with. If these matrices would have other nonnegative factorizations, we find it unlikely that this would be the case.

 Vandaele, Gillis, Glineur and Tuyttens discuss in~\cite[Section 2]{vandaele2016heuristics} that $A$ and $B$ positive are known to increase the number of factorizations of $AB$ and hence factoring $AB$ is usually easier. This suggests that matrices with small factorization spaces are the most difficult for exact nonnegative matrix factorization algorithms. Hence one application of the $15$ matrices above could be as benchmark matrices for nonnegative matrix factorization algorithms.

We also constructed orbit representatives for all zero patterns with $13$ zeros satisfying the conditions of Theorem~\ref{prop:Wpoint} and Lemma~\ref{lemma:max_number_entries} for larger matrices such that every row of $A$ contains a zero, and the number of columns of $B$ is five or every column of $B$ contains a zero. The number of such zero patterns for each matrix size is listed in Table~\ref{table:number_of_zero_patterns}.
\begin{table}[H]
\caption{Number of zero patterns with $13$ zeros satisfying the conditions of Theorem~\ref{prop:Wpoint} and Lemma~\ref{lemma:max_number_entries} for different matrix sizes such that every row of $A$ contains a zero, and the number of columns of $B$ is five or every column of $B$ contains a zero. }
 \label{table:number_of_zero_patterns}
\begin{center}
\begin{tabular}{ |c| c| c| c| c| c| c| }
\hline
 $5 \times 5$ & $6 \times 5$ & $6 \times 6$ & $7 \times 5$ & $7 \times 6$  & $8 \times 5$ &  $9 \times 5$\\
 \hline
 15 & 26 & 14 & 24 & 11 & 10 & 2\\
 \hline 
\end{tabular}
\end{center}
\end{table}

Differently from the $5 \times 5$ case, not all of these zero patterns automatically satisfy the necessary condition in Lemma~\ref{lemma:zero rectangles} which is more difficult to check than the necessary conditions in Theorem~\ref{prop:Wpoint} and Lemma~\ref{lemma:max_number_entries}. In the case of $6\times 5$ matrices, one out of $26$ zero patterns fails the necessary condition in Lemma~\ref{lemma:zero rectangles}. It is
$$
\begin{pmatrix}
0 & 0 & \cdot & \cdot \\
0 & 0 & \cdot & \cdot \\
0 & \cdot & \cdot & \cdot \\
\cdot & 0 & \cdot & \cdot \\
\cdot & \cdot & 0 & \cdot \\
\cdot & \cdot & \cdot & 0
\end{pmatrix}
\begin{pmatrix}
\cdot & \cdot & 0 & \cdot & \cdot \\
\cdot & \cdot & \cdot & 0 & \cdot \\
0 & 0 & \cdot & \cdot & \cdot \\
\cdot & \cdot & \cdot & \cdot & 0
\end{pmatrix}.
$$
For the rest of the $25$ zero patterns, Huanhuan Chen constructs infinitesimally rigid realizations in his Master's thesis~\cite{chen2019infinitesimally}. He also shows that for larger factorizations there does not exist an infinitesimally rigid factorization realizing  every pattern of $r^2-r+1$ zeros satisfying the conditions of Theorem~\ref{prop:Wpoint} and Lemma \ref{lemma:zero rectangles}. He gives a stronger necessary condition for an infinitesimally rigid realization to exist and conjectures that this condition is also sufficient.

\bigskip

\begin{small}

\noindent
{\bf Acknowledgments.}
We thank two anonymous referees, Huanhuan Chen, Nicolas Gillis, Ivan Izmestiev and Arnau Padrol for helpful comments and suggestions. This material is based upon work supported by the National Science Foundation under Grant
 No.~DMS-1439786 while the authors were in residence at the Institute for Computational and Experimental Research in Mathematics in Providence, RI, during the Fall 2018 semester. Kubjas was supported by the European Union's Horizon 2020 research and innovation programme (Marie Sk\l{}odowska-Curie grant agreement No.~748354, research carried out at LIDS, MIT and PolSys, LIP6, Sorbonne Universit\'e). We acknowledge the computational resources provided by the Aalto Science-IT project.

\smallskip

\end{small}

\bibliography{bibl}
\bibliographystyle{plain}

\smallskip

\smallskip

\noindent Authors' affiliations:

\vspace{0.2cm}
\noindent Robert Krone, Department of Mathematics, University of California, Davis,\newline \texttt{rckrone@ucdavis.edu}

\vspace{0.2cm}
\noindent Kaie Kubjas, Department of Mathematics and Systems Analysis, Aalto University,\\ \texttt{kaie.kubjas@aalto.fi}

\end{document}